\newtheorem{thm}{Theorem}[section]
\newtheorem{cor}[thm]{Corollary}
\newtheorem{lem}[thm]{Lemma}
\newtheorem{prop}[thm]{Proposition}
\theoremstyle{definition}
\newtheorem{defn}[thm]{Definition}
\theoremstyle{remark}
\newtheorem{rem}[thm]{Remark}
\numberwithin{equation}{section}
\newcommand{\norm}[1]{\left\Vert#1\right\Vert}
\newcommand{\abs}[1]{\left\vert#1\right\vert}
\newcommand{\set}[1]{\left\{#1\right\}}
\newcommand{\dbar}{\bar\partial}
\newcommand{\ddbar}{\partial\bar\partial}
\newcommand{\tmop}[1]{\ensuremath{\operatorname{#1}}}
\renewcommand{\Re}{\tmop{Re}}
\DeclareMathOperator{\range}{Range}
\DeclareMathOperator{\re}{Re}
\DeclareMathOperator{\im}{Im}
\newcommand{\rl}{{\mathbb{R}}}
 \DeclareMathOperator{\tr}{Tr}
\newcommand{\red}[1]{\textcolor{red}{#1}}
\newcommand{\violet}[1]{\textcolor{violet}{#1}}
\DeclareMathOperator{\Tr}{Tr}
\newcommand{\Om}{\Omega}
\newcommand{\opI}{\mathcal{I}}
\newcommand{\qt}{\widetilde{q}}
\newcommand{\dbars}{\bar\partial^*}
\DeclareMathOperator{\Dom}{Dom}
\newcommand{\bd}{\partial}
\newcommand{\p}{\partial}
\newcommand{\R}{\mathbb R}
\newcommand{\N}{\mathbb N}
\newcommand{\C}{\mathbb C}
\newcommand{\opL}{\mathcal{L}}
\newcommand{\cx}{\mathbb{C}}
\newcommand{\ol}{\overline}
\newcommand{\nn}{\nonumber}
\newcommand{\ep}{\epsilon}
\newcommand{\I}{\mathcal{I}}
\newcommand{\vp}{\varphi}
\newcommand{\om}{\omega}
\newcommand{\omb}{\bar\omega}
\newcommand{\dbarsvp}{\bar\partial^*_\varphi}
\newcommand{\ipr}[1]{\left\langle #1 \right\rangle}
\begin{document}

\title{The $\bar\partial$-problem on $Z(q)$-domains}%

\author{Debraj Chakrabarti}
\address{Department of Mathematics,
	Central Michigan University,
	Mt. Pleasant, MI 48859,
	USA}
\email{chakr2d@cmich.edu}
\author{Phillip S. Harrington}
\author{Andrew Raich}%
\address{Department of Mathematical Sciences, SCEN 309, 1 University
		of Arkansas, Fayetteville, AR 72701, USA}
\email{psharrin@uark.edu,\ araich@uark.edu}
\begin{abstract}
\end{abstract}
	
\thanks{The first author was partially supported by a  US National Science
	Foundation grant number DMS-2153907, and by a grant from the Simons
	Foundation (706445, DC).
The third author was partially supported by a grant from the Simons Foundation (707123, ASR)}

\begin{abstract} Given a complex manifold containing a relatively compact $Z(q)$ domain, we give sufficient geometric conditions on the domain so that its $L^2$-cohomology in degree $(p,q)$ (known to be finite dimensional) vanishes.  The condition consists of	 the existence of a  smooth weight function in a neighborhood of the closure of the domain, where the complex Hessian of the weight has a prescribed number of eigenvalues of a particular sign, along with good interaction at the boundary of the Levi form with the complex Hessian, encoded in a subbundle of common positive directions for the two Hermitian forms.
\end{abstract}

\keywords{Hermitian manifolds, $Z(q)$, $q$-complete, $q$-convex, $q$-plurisubharmonic, $\bar\partial$-problem, $L^2$-theory}
\subjclass[2020]{32F10 
32F32 
32W05 
}

\maketitle

%
%

\section{Introduction}
\subsection{Solvability of the $\dbar$-problem}
In this paper, we give a sufficient geometric condition for a relatively compact $Z(q)$ domain $\Om$ in a complex manifold to have vanishing Dolbeault cohomology at level $(p,q)$. In particular, our condition requires
a smooth weight function defined in a neighborhood of $\bar\Om$ whose complex Hessian has a given number of positive and negative eigenvalues. Our technique is to build a metric that turns a condition about \emph{numbers} of
eigenvalues into one about \emph{sums} of eigenvalues. Typically, the former conditions are invariant under biholomorphisms while $L^2$ methods require the latter conditions. The heart of our argument is an investigation of
an invariant condition in which the Levi form and the complex Hessian of the weight share positive directions.  This is novel and represents a new approach to rectify the dichotomy between invariant conditions and sufficient conditions to use $L^2$ methods.

 The solvability of the $\dbar$-problem on a domain in $\cx^n$ or on a complex manifold depends on certain convexity conditions, the most natural of which is being Stein, i.e.,
	the existence of a strictly plurisubharmonic exhaustion function.  Under this condition we get the vanishing of
	the Dolbeault cohomology in degree $(p,q)$ for $q\geq 1$, a special case of H.~Cartan's celebrated \emph{Theorem B} (see \cite{GuRo1965}). It was realized early that one can generalize this substantially: by well-known results of
	Andreotti and Grauert (see \cite{AnGr62}), if  there is a smooth exhaustion function  on the $n$-dimensional complex  manifold $M$  whose
	complex Hessian has $n-q+1$ positive eigenvalues, then $H^{p,q}(M)=0$ for each $p$. A smooth function on an $n$-dimensional complex manifold whose complex Hessian has $n-q+1$ positive eigenvalues at each point  is usually called a \emph{(strictly) $q$-convex function}, but there are other competing conventions for this and other related definitions, so we mostly avoid the use of the ``$q$-terminology" in this paper.   As is common in differential geometry,
	the convexity condition is encoded in a Hermitian form (cf.\ the second fundamental form in the classical theory of surfaces in Euclidean space).
		For more on the $q$-convexity conditions, see \cite{ohsawaqconvex,EaSu80}.

	  In the study of complex function theory on a domain (open connected subset) $\Omega$ in a complex manifold $M$,
	 methods based on $L^2$-estimates are usually easier to use than the classical sheaf-theoretic arguments (see \cite{ChSh01, Dem12b}). Establishing $L^2$ estimates requires the choice of a Hermitian metric on the manifold.
	  The complex convexity of the boundary $\p\Om$
is encoded in its Levi form. The ``interior complex convexity" of the domain is encoded in a
 smooth ``weight function" $\varphi$ in a neighborhood of the closure $\ol{\Om}$, where the complex Hessian  of $\vp$
has certain positivity conditions imposed. One can interpret $\vp$ as giving rise to a Hermitian metric $e^{-\vp}$  on the trivial line bundle in a neighborhood of $\ol{\Omega}$.  By a well-known result of H\"ormander (see \cite{Hor65}), if the domain $\Omega$ is strictly pseudoconvex, and the weight $\vp$ is strictly plurisubharmonic,  then we have
 the vanishing of the  $L^2$-cohomology $H^{p,q}_{L^2}(\Omega)$ for $q\geq 1$.
  This is of course equivalent to the solvability, with estimates in the $L^2$-norm, of the $\dbar$-equation $\dbar u =g$  for a $\dbar$-closed square-integrable $(p,q)$-form $g$.

In analogy with convexity conditions on manifolds given by exhaustion or weight functions with complex Hessians of specified signature,	 one can also consider partial convexity conditions for the boundary 
in order to study the $\dbar$-problem in a fixed degree.  For  $1\leq q \leq n-1$, a smoothly bounded domain $\Omega$ in an $n$-dimensional  complex manifold
	is said to satisfy  \emph{condition $Z(q)$},
	if at each point of the boundary $\p\Omega$ the
	Levi form has at least $n-q$ positive eigenvalues, or has at least $q+1$ negative eigenvalues. This condition is fundamental in the theory of the $\dbar$-Neumann problem, since it is necessary and sufficient for $\frac{1}{2}$-subelliptic estimates on $(p,q)$-forms (see \cite{FoKo72}).
	
	 	  Another type of partial convexity condition arises from a consideration of the right-hand side of the  Bochner-Kohn-Morrey-H\"ormander identity for higher degree forms, and the condition needed for positivity of  the boundary integral (involving the Levi form) and the interior integral (involving the complex Hessian of the weight) (see \cite{Ho91}).
 Following \cite{varmcneal},	let us introduce  a notion from linear algebra.
	 	 \begin{defn}\label{def-qpositive}
	 	 	Let $H$ be a Hermitian form on a complex inner product space $(E,g)$. We say that $H$ is \emph{strictly $q$-positive} with respect to $g$ if the  sum of each collection of $q$ eigenvalues of $H$ with respect to $g$
	 	 	 is  positive. If the sum of each collection of $q$-eigenvalues is nonnegative, we say that $H$ is \emph{$q$-positive} with respect to $g$.
	 	 \end{defn}
 	  See below in Section~\ref{sec-eigenvalue} for a more detailed discussion of the eigenstructure of Hermitian forms with respect to an inner product.
 	 Generalizing the result of H\"ormander stated above, it follows from   \cite[Theorem~2.14]{varmcneal} that given a smoothly bounded relatively  compact domain
 	 $\Omega$ in a K\"ahler manifold, and a smooth weight $\varphi$ in a neighborhood of $\ol{\Omega}$,  simultaneous strict $q$-positivity of the Levi form of $\partial\Omega$
 	  and of the complex Hessian of $\varphi$ on $\ol{\Omega}$ constitutes a sufficient condition for the vanishing of the $L^2$-cohomology in degree $(p,q)$. We will prove  stronger versions of this result without the K\"ahler hypothesis below in Theorem~\ref{thm:L^2 theory, with weights}  and
 	  	Theorem~\ref{thm:L^2 theory, no weight}. Related results were studied in
 	  	\cite{Ho91,wu81} etc.

 	  The hypotheses and conclusions of Theorem~\ref{thm:L^2 theory, with weights}  and
 	  Theorem~\ref{thm:L^2 theory, no weight} have a dissatisfying incongruity about them and this is one of our motivations for starting this project.
	  The $L^2$-cohomology of a bounded domain in a complex manifold is defined independently of the choice of the Hermitian metric. On the other hand, the hypotheses on the Levi form of the boundary and
 	  the complex Hessian of the weight involve the metric (since strict $q$-positivity of the two Hermitian forms is defined with respect to this metric). Therefore, Theorem~\ref{thm:L^2 theory, with weights}  and
 	  Theorem~\ref{thm:L^2 theory, no weight} draw a metric-independent
 	  conclusion from a hypothesis that depends very much on the choice of a metric.  	   This paper is an attempt to understand what purely complex-geometric conditions on a domain in a complex manifold suffice to
 	  ensure that the $L^2$-cohomology is zero.
 	
 	\subsection{Results}   Andreotti and Vesentini in   \cite[Section 5]{av65} gave a
 	  proof of the vanishing theorem of Andreotti  and Grauert stated above by constructing a metric in which the complex Hessian of the exhaustion $\vp$ is
 	 strictly $q$-positive and applying $L^2$-methods.  We will use the same approach, but our metric construction is fundamentally different in that their metric is complete while we focus extensively on the interaction of the metric
	 with $\p\Om$. The subtle and delicate aspect of our work is ensuring that the Levi form and the complex Hessian of the weight function are simultaneously strictly $q$-positivity in the metric we construct. This allows us to
 	 apply $L^2$-results like
 	   Theorem~\ref{thm:L^2 theory, with weights}  and
 	  Theorem~\ref{thm:L^2 theory, no weight}.  Let us say that on a relatively compact domain $\Omega$ in a complex manifold, the $\dbar$-operator satisfies the \emph{Folland-Kohn basic estimate}
 	  in degree $(p,q)$,   if	  there exists a constant
 	  $C>0$ so that for all $(p,q)$-forms $f \in \Dom(\dbar)\cap\Dom(\dbars)$ we have
 	  \begin{equation}
 	  	\label{eq-follandkohn}
 	  	 	 \norm{f}_{L^2(\bd\Om)}^2 \leq C \left(\norm{\dbar f}_{L^2(\Om)}^2 + \norm{\dbars f}_{L^2(\Om)}^2 + \norm{f}_{L^2(\Om)}^2\right).
 	  	  \end{equation}
 	It is well-known that such an estimate has numerous important consequences for the function theory of $\Omega$. Combined with  the ellipticity of $\dbar\oplus\dbars$ in the interior of $\Om$ (where there are no boundary conditions), it follows that the \emph{ $L^2$-cohomology  space $H^{p,q}_{L^2}(\Om)$ is finite dimensional. }
 	Additionally, $\frac{1}{2}$-subelliptic estimates hold for the $\dbar$-Neumann problem (see \cite{CaSh07,FoKo72,Str10}).
 	
 	  Our techniques provide a new proof of the following well-known result.

\begin{thm}\label{thm-zq}Let $M$ be a complex manifold and
	let $\Omega\subset M$ be a smoothly bounded relatively compact domain, and let  $1 \leq q \leq n-1.$
	 Suppose that the domain $\Omega$ satisfies condition $Z(q)$.
	Then  the $\dbar$-operator on $(p,q)$-forms  on $\Omega$  satisfies the Folland-Kohn basic estimate \eqref{eq-follandkohn}.
 \end{thm}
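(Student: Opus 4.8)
\emph{Proof idea.} The plan is to convert condition $Z(q)$, which concerns the \emph{number} of Levi eigenvalues of a given sign, into a condition on \emph{sums} of eigenvalues by an adroit choice of Hermitian metric near $\bd\Om$, and then to extract \eqref{eq-follandkohn} from the Bochner--Kohn--Morrey--H\"ormander (basic) identity. Two reductions make this work. First, \eqref{eq-follandkohn} is insensitive to the metric up to the size of $C$: on the fixed compact set $\ol\Om$ any two Hermitian metrics are uniformly equivalent, and changing metric alters $\dbars$ only by a zeroth order operator, so it suffices to prove \eqref{eq-follandkohn} for one convenient metric on a neighborhood of $\ol\Om$. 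Second, \eqref{eq-follandkohn} is local near $\bd\Om$: multiplication by real cutoffs preserves $\Dom(\dbar)\cap\Dom(\dbars)$ and disturbs $\dbar f$, $\dbars f$ only by terms bounded by $\|f\|_{L^2(\Om)}$, so it is enough to estimate $f$ supported in a small neighborhood of an arbitrary boundary point.

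I would then split the boundary. Since $(n-q)+(q+1)=n+1>n-1=\dim_\C T^{1,0}_x(\bd\Om)$, the two alternatives in $Z(q)$ are mutually exclusive at each $x\in\bd\Om$, and each is open; hence $\bd\Om=\bd_+\Om\sqcup\bd_-\Om$ with $\bd_+\Om$ and $\bd_-\Om$ compact and relatively open, the Levi form having at least $n-q$ positive eigenvalues on $\bd_+\Om$ and at least $q+1$ negative eigenvalues on $\bd_-\Om$. Fix disjoint open neighborhoods $V_\pm$ of $\bd_\pm\Om$ with $\overline{V_\pm}\cap\bd\Om=\bd_\pm\Om$. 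Over $V_+$ the Levi form of a defining function has an $(n-q)$-dimensional subbundle of positive directions; rescaling a background metric so that, on $\bd_+\Om$, the eigenvalues along that subbundle exceed a prescribed large constant $M$ while the remaining $q-1$ stay above a fixed constant $-M_0$, every sum of $q$ eigenvalues contains at least one of the large ones and so is $\ge M-(q-1)M_0$: the Levi form becomes \emph{strictly $q$-positive} on $\bd_+\Om$ in the sense of Definition~\ref{def-qpositive}. Symmetrically, over $V_-$ the Levi form has a $(q+1)$-dimensional subbundle of negative directions; since any collection of $n-1-q$ eigenvalues must, by pigeonhole, include one from this $(q+1)$-dimensional part, the same rescaling applied to $-(\text{Levi form})$ makes $-(\text{Levi form})$ strictly $(n-1-q)$-positive on $\bd_-\Om$. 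As $V_+\cap V_-=\emptyset$, these choices patch with an arbitrary smooth metric away from $\bd\Om$ into a single Hermitian metric $g$ on a neighborhood of $\ol\Om$; no weight is needed, since for \eqref{eq-follandkohn} only the boundary term of the basic identity matters.

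For $f\in\Dom(\dbar)\cap\Dom(\dbars)$ supported in $V_+$ I would invoke the basic identity for $(p,q)$-forms in the metric $g$: it expresses $\|\dbar f\|^2+\|\dbars f\|^2$ as a sum of squares of $\dbar$-derivatives of the coefficients of $f$, plus $\int_{\bd\Om}$ of the Levi form contracted against $f$, plus error terms carrying at most one derivative of $f$ (including genuine torsion terms, as $M$ need not be K\"ahler). The elementary eigenvalue inequality underlying Definition~\ref{def-qpositive} --- the boundary contraction against $f$ is pointwise at least $(\text{sum of the }q\text{ smallest Levi eigenvalues})\,|f|^2$ --- shows the boundary integrand is $\ge c\,|f|^2$ by construction; a standard small-constant/large-constant absorption of the error terms into the sum of squares, into $c\,\|f\|_{L^2(\bd\Om)}^2$, and into $\|f\|_{L^2(\Om)}^2$ yields $\|f\|_{L^2(\bd\Om)}^2\les\|\dbar f\|^2+\|\dbars f\|^2+\|f\|^2$. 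For $f$ supported in $V_-$ I would run the conjugate form of this identity (available via complex conjugation together with the Hodge star), whose boundary form is governed by sums of $n-1-q$ eigenvalues of $-(\text{Levi form})$ and is therefore again $\ge c\,|f|^2$ by the construction over $V_-$; the same absorption gives the estimate there. Summing over a cutoff pair $\chi_\pm$ with $\supp\chi_\pm\subset V_\pm$ and $\chi_++\chi_-\equiv 1$ near $\bd\Om$, using $\|f\|_{L^2(\bd\Om)}^2=\|\chi_+f\|_{L^2(\bd\Om)}^2+\|\chi_-f\|_{L^2(\bd\Om)}^2$ (disjoint traces) together with the commutator bounds, produces \eqref{eq-follandkohn}. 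This is the same mechanism by which the metric constructions feed into Theorem~\ref{thm:L^2 theory, no weight}.

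The main obstacle is the metric construction together with the absorption step: the rescaled metric must be arranged so that the relevant Hermitian form is strictly $q$-positive (resp.\ $(n-1-q)$-positive) not merely in the interior but \emph{uniformly along $\bd\Om$}, with a positivity constant large enough to dominate the metric-dependent curvature and torsion terms of the basic identity on a general Hermitian manifold --- where the rescaling, being concentrated on a varying subbundle, itself inflates those error terms, so one must check that the gain in positivity still outpaces the loss. Managing this interplay --- genuinely converting the combinatorial hypothesis $Z(q)$ into the sum-of-eigenvalues positivity that $L^2$ methods require, right up to the boundary, without losing control of the lower-order geometry --- is the delicate point the authors single out in the introduction.
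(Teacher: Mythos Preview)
Your overall strategy coincides with the paper's: construct a Hermitian metric on a neighborhood of $\ol\Om$ in which the Levi form is strictly $q$-positive on the boundary components where it has $n-q$ positive eigenvalues and $-\opL_\rho$ is strictly $(n-1-q)$-positive on the components where $\opL_\rho$ has $q+1$ negative eigenvalues, and then feed this into the unweighted basic identity (Theorem~\ref{thm:L^2 theory, no weight}). The $L^2$ part of your sketch is correct and matches the paper.

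The gap is in the metric construction. You assert that ``over $V_+$ the Levi form \ldots\ has an $(n-q)$-dimensional subbundle of positive directions'' and propose to rescale along it. But the $Z(q)$ hypothesis gives only a \emph{pointwise} count of positive eigenvalues; it does not furnish a continuous rank-$(n-q)$ subbundle of $T^{1,0}(\bd\Om)$ on which $\opL_\rho$ is positive definite. Indeed, the paper's own Corollary~\ref{cor:discontinuous_subbundle} exhibits a smooth family of Hermitian forms on $\C^2$, each with a positive eigenvalue, admitting no continuous field of positive directions. The same obstruction applies on $V_-$. Your proposed rescaling (which, when a continuous subbundle \emph{is} available, is essentially the construction of Theorem~\ref{thm-keyconstruction}) therefore cannot get started in general, and patching local rescalings by a partition of unity does not obviously preserve strict $q$-positivity.

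The paper's replacement for this step is Proposition~\ref{prop-main}, whose proof avoids any global subbundle of positive directions. Instead it stratifies the base by the number $\nu_-(p)$ of negative eigenvalues of the form, and on the stratum $\{\nu_-=r\}$ rescales along the \emph{negative} eigenspace; the point is that this eigenspace, having constant rank on the stratum, is the range of a smooth spectral projection (Proposition~\ref{prop-analytic}), so the modified metric is smooth there. An inductive choice of the rescaling factors, vanishing near the lower strata, glues these modifications into a single smooth metric for which the form is strictly $q$-positive. This is the genuine content the proof needs and which your proposal lacks.

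Finally, the ``main obstacle'' you single out --- that rescaling might inflate the torsion and curvature error terms faster than it improves positivity --- is not the real issue. Once a fixed metric with the required strict $q$-positivity is in hand, the basic identity \eqref{eqn:basic identity} (resp.\ \eqref{eqn:basic identity, IBP}) carries a full $\|\bar L f\|^2$ term, and $E(f)$ is absorbed by a small-constant/large-constant argument regardless of the size of the metric-dependent constants, exactly as in the proof of Theorem~\ref{thm:L^2 theory, no weight}. The true difficulty lies one step earlier, in producing that metric without a continuous subbundle to lean on.
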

Our main tool is the following proposition, which is closely related to
\cite[Lemma 18]{av65}, though our proof is quite different from that offered
in \cite{av65}. It has the added flexibility of prescribing the metric on a closed subset of the manifold.
\begin{prop}\label{prop-main}
	Let $E$ be a smooth complex vector bundle of rank $d$ over a smooth manifold $M$, and let $S$ be a smooth Hermitian form on $E$ such that  for some $1\leq \qt \leq d$, at each $p\in M$,  $S_p$ has at least $d-\qt+1$ strictly positive eigenvalues.
	Suppose that there is a (possibly empty) closed subset $F\subset M$ and a Hermitian metric $g_0$ on $M$ such that on $F$, the Hermitian form $S$ is strictly $\qt$-positive
	with respect to $g_0$. 	Then there is a Hermitian metric $g$ on $E$ such that
	$S$ is strictly $\qt$-positive with respect to $g$, and in a neighborhood of $F$ we have $g=g_0$.	
\end{prop}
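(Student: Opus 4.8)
The plan is to work locally and patch using a partition of unity, after first understanding the linear-algebra content pointwise. The key observation is that at a point $p$ where $S_p$ has at least $d-\qt+1$ strictly positive eigenvalues (with respect to \emph{some}, hence any, reference inner product), one can always choose an inner product $h_p$ on $E_p$ with respect to which $S_p$ is strictly $\qt$-positive. Indeed, in a basis diagonalizing $S_p$, scaling the positive directions to be very large and the nonpositive directions to be of unit size, the $\qt$ smallest eigenvalues of $S_p$ with respect to the rescaled metric are precisely the $\qt$ \emph{largest} of the original positive eigenvalues after rescaling (the nonpositive ones, of which there are at most $\qt-1$, cannot fill up a whole collection of size $\qt$), so their sum becomes arbitrarily large and in particular positive. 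This gives, for each $p$, an open neighborhood $U_p$ and a smooth Hermitian metric $g_p$ on $E|_{U_p}$ with $S$ strictly $\qt$-positive with respect to $g_p$ throughout $U_p$ (strict $\qt$-positivity is an open condition, so shrinking $U_p$ suffices). For points $p\in F$ we instead take $U_p$ a neighborhood on which $S$ is still strictly $\qt$-positive with respect to $g_0|_{U_p}$, which exists because strict $\qt$-positivity is open and holds on $F$; here we set $g_p=g_0$.

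Next I would globalize. The cover $\{U_p\}_{p\in M}$ admits a subordinate smooth partition of unity $\{\chi_\alpha\}$ with $\supp\chi_\alpha\subset U_{p(\alpha)}$; set $g=\sum_\alpha \chi_\alpha\, g_{p(\alpha)}$. This is a smooth Hermitian metric on $E$ since a convex combination of positive-definite Hermitian forms is positive-definite. The essential point is that strict $\qt$-positivity is preserved under convex combinations: if $H$ is a fixed Hermitian form and $g^{(1)},g^{(2)}$ are inner products each making $H$ strictly $\qt$-positive, then for $t\in[0,1]$, $H$ is strictly $\qt$-positive with respect to $t g^{(1)}+(1-t)g^{(2)}$. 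I would prove this using the min–max (Courant–Fischer) characterization of the sum of the $\qt$ smallest eigenvalues of $H$ relative to an inner product $g$: namely, $\sum_{j=1}^{\qt}\lambda_j(H,g)=\inf\{\,\tr_g(H|_V): V\subset E_p,\ \dim_\C V=\qt\,\}$, where $\tr_g(H|_V)$ is the trace of $H$ restricted to $V$ computed in a $g$-orthonormal basis of $V$. For fixed $V$, the quantity $\tr_g(H|_V)$ is not linear in $g$, but a cleaner route is to note that $H$ strictly $\qt$-positive with respect to $g$ is equivalent to the statement that for every $\qt$-dimensional subspace $V$, the restriction $H|_V$ has positive trace relative to $g|_V$, equivalently $\tr(G^{-1}H)>0$ on $V$ where $G,H$ are the Gram matrices; since the cone of such "$\qt$-positive" $g$ is defined by a family of conditions each of which (for fixed $V$) cuts out a convex set — this requires care — I would instead argue via the eigenvalue-sum being a concave function of $g$ in an appropriate sense, or simply reduce to the two-term case and use that the set of metrics making $S$ strictly $\qt$-positive is convex because it is an intersection over all $V$ of the sets $\{g: H|_V \text{ has positive } g\text{-trace}\}$ and each such set is convex (the $g$-trace of a fixed Hermitian form on a fixed space $V$ is, after a common diagonalization, $\sum (\text{original eigenvalues})/(g\text{-eigenvalues})$, which is not obviously convex — so the honest proof runs through the variational formula).

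The main obstacle, and the step deserving the most care, is precisely this convexity/stability of strict $\qt$-positivity under taking convex combinations of the metric, together with matching $g$ to $g_0$ near $F$. For the latter: I would first replace $F$ by a slightly larger open set $W\supset F$ on which $S$ is still strictly $\qt$-positive with respect to $g_0$, shrink $W$ so that $\overline{W}$ still has this property, and arrange the partition of unity so that \emph{every} $\chi_\alpha$ whose support meets $W$ has $g_{p(\alpha)}=g_0$; then on a neighborhood of $F$ (namely the complement of the supports of the finitely many — or locally finitely many — remaining $\chi_\alpha$'s) we get $g=\sum\chi_\alpha g_0=g_0$ exactly. This is arranged by including $W$ itself as one of the open sets of the cover with assigned metric $g_0$ and using a partition of unity subordinate to the cover $\{W\}\cup\{U_p:p\notin F\}$, after shrinking the $U_p$ ($p\notin F$) to avoid $F$; strict $\qt$-positivity still holds throughout because at each point $x$, $g$ is a convex combination of metrics each making $S$ strictly $\qt$-positive at $x$, and we invoke the convexity statement. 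For the convexity statement itself, the clean proof is: by Ky Fan's maximum principle / Courant–Fischer, $\sum_{j=1}^{\qt}\lambda_j(H,g)$ equals the infimum over $g$-orthonormal $\qt$-frames $(e_1,\dots,e_{\qt})$ of $\sum_{i} H(e_i,e_i)$; parametrize such frames as images of a fixed $g_0$-orthonormal frame under maps, and check that the infimum of the resulting family — being an infimum of functions each of which is \emph{affine} in the Gram matrix $G$ of $g$ once one fixes the underlying subspace and uses the right substitution — is concave in $G$, hence its positivity on the endpoints $g^{(1)},g^{(2)}$ forces positivity along the segment. I would isolate this as a lemma in Section~\ref{sec-eigenvalue} (on the eigenstructure of Hermitian forms) and then the proof of the proposition is the partition-of-unity assembly described above.
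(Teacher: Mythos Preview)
Your partition-of-unity strategy hinges entirely on the claim that the set of Hermitian metrics $g$ for which a fixed Hermitian form $H$ is strictly $\qt$-positive is \emph{convex}. You flag this yourself as the step ``deserving the most care,'' and indeed it is false in general. Here is a counterexample. On $\cx^3$ take
\[
H=\begin{pmatrix}1&0&0\\0&1&0\\0&0&-3\end{pmatrix},\qquad
G_1=\begin{pmatrix}1&0.9&0\\0.9&1&0\\0&0&1\end{pmatrix},\qquad
G_2=\begin{pmatrix}1&-0.9&0\\-0.9&1&0\\0&0&1\end{pmatrix}.
\]
Both $G_1,G_2$ are positive definite, and a direct computation gives $\tr(G_j^{-1}H)=\tfrac{2}{0.19}-3>0$ for $j=1,2$, so $H$ is strictly $3$-positive with respect to each. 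But $\tfrac12(G_1+G_2)=I$ and $\tr(H)=-1<0$, so $H$ is \emph{not} strictly $3$-positive with respect to the average. (Note $H$ has $2\geq d-\qt+1=1$ positive eigenvalues, so this is a legitimate instance of the hypothesis.) The reason your $2\times 2$ intuition fails is that for $\dim V=2$ the sign of $\tr_{g|_V}(H|_V)$ is governed by a \emph{linear} expression in the entries of $g|_V$, whereas in dimension $\geq 3$ it is governed by cofactors, which are quadratic; the superlevel set of a quadratic need not be convex. Neither of your suggested routes (Ky Fan / Courant--Fischer, or concavity of the eigenvalue sum in $g$) can rescue this, because the conclusion itself is false.

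The paper therefore does \emph{not} patch local metrics. Instead it builds $g$ globally by an induction on the number $r$ of negative eigenvalues of $S$: starting from $g_0$, at stage $r$ one modifies $g_{r-1}$ only on the set where $S$ has exactly $r$ negative eigenvalues, by adding a large multiple of $g_{r-1}(P^r\cdot,P^r\cdot)$, where $P^r$ is the $g_{r-1}$-orthogonal projection onto the negative eigenspace of $S$. This shrinks the negative eigenvalues (relative to the new metric) by a controllable factor while leaving the nonnegative ones unchanged, and one chooses the scaling function so that the smallest $\qt$ eigenvalues sum to something positive. The smoothness of $P^r$ across the stratum is the delicate point, handled via the contour-integral formula for spectral projections (Proposition~\ref{prop-analytic}); this is exactly what replaces the role your partition of unity was meant to play.
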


As already noted above, the Folland-Kohn basic estimate \eqref{eq-follandkohn} proven  in Theorem~\ref{thm-zq} suffices to prove that the $L^2$ cohomology $H^{p,q} _{L^2}(\Om)$ of $\Omega$ is finite dimensional.  The main goal of this paper is to provide sufficient conditions for the \emph{vanishing} of this cohomology, in terms of
 global information about the embedding of $\Omega$ in the ambient manifold $M$.  This global structure is provided by a weight function which is compatible with the Levi-form of $\Omega$ in a sense which is made precise by the following theorem.  We use $T^{1,0}(\bd\Omega)$ to denote the $(1,0)$-tangent bundle   of the boundary of $\Omega$.

\begin{thm}\label{thm:continuously varying} Let $M$ be an $n$-dimensional complex manifold,  let $\Om\subseteq M$ be a smoothly bounded relatively compact domain, and  let
	$1 \leq q \leq n-1$.
 Suppose that there is a
	smooth function $\vp$ defined in a neighborhood of $\ol{\Omega}$ such that:\\[2mm]
	Either
	\begin{enumerate}
		\item 	 there is a continuous subbundle of rank $(n-q)$  of  $T^{1,0}(\bd\Omega)$ on which both the Levi form of $\bd\Om$  and the complex Hessian of $\vp$ are positive, and
		\item the complex Hessian of $\vp$ has at least $(n-q+1)$ positive eigenvalues at each point of $\ol{\Om}$.
	\end{enumerate}
Or
	\begin{enumerate}
		\item 	 there is a continuous subbundle of rank $(q+1)$  of  $T^{1,0}(\bd\Omega)$ on which both the Levi form of $\bd\Om$  and the complex Hessian of $\vp$ are negative,
		\item the complex Hessian of $\vp$ has at least $(q+1)$ negative eigenvalues at each point of $\ol{\Om}$, and
		\item the restriction of the complex Hessian of $\vp$ to $T^{1,0}(\partial\Omega)$ is nondegenerate at each point of $\partial\Omega$.
	\end{enumerate}
Then the $L^2$-cohomology $H^{p,q} _{L^2}(\Om)$ of $\Omega$ in degree $(p,q)$ vanishes for $0 \leq p \leq n$.
\end{thm}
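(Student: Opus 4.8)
The plan is to reduce the vanishing of $H^{p,q}_{L^2}(\Om)$ to the Folland--Kohn basic estimate together with a suitable metric construction, following the Andreotti--Vesentini strategy as adapted via Proposition~\ref{prop-main}. The key point is to manufacture a Hermitian metric $g$ on $M$ near $\ol\Om$ and a weight (which may be built from $\vp$) so that, \emph{simultaneously}, the Levi form of $\bd\Om$ is strictly $q$-positive with respect to $g$ along $\bd\Om$ and the complex Hessian of the weight is strictly $q$-positive with respect to $g$ on $\ol\Om$; once this is in place, the (weighted) Bochner--Kohn--Morrey--H\"ormander identity gives a solution operator for $\dbar$ with estimates, which is exactly what Theorem~\ref{thm:L^2 theory, with weights} / Theorem~\ref{thm:L^2 theory, no weight} packages as the vanishing of $H^{p,q}_{L^2}(\Om)$.

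First I would treat the ``Either'' case. Condition~(1) furnishes a continuous rank-$(n-q)$ subbundle $V\subset T^{1,0}(\bd\Om)$ on which both the Levi form $\opL$ and the complex Hessian $\Hess(\vp)$ are positive; condition~(2) says $\Hess(\vp)$ has at least $n-q+1$ positive eigenvalues on $\ol\Om$. I want to apply Proposition~\ref{prop-main} with $\qt=q$ to the Hermitian form $S=\Hess(\vp)$ on $E=T^{1,0}M$ (restricted to a neighborhood of $\ol\Om$): the eigenvalue hypothesis ``at least $d-\qt+1 = n-q+1$ positive eigenvalues'' is precisely (2), so once I have a metric $g_0$ in which $S$ is strictly $q$-positive \emph{on the closed set} $F=\bd\Om$, the Proposition upgrades it to a metric $g$ on all of $\ol\Om$ with $S$ strictly $q$-positive globally and $g=g_0$ near $\bd\Om$. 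So the real work is constructing $g_0$ near $\bd\Om$: I need a metric making both $\opL$ and $\Hess(\vp)$ strictly $q$-positive along $\bd\Om$. The idea is to exploit the common positive subbundle $V$. Split $T^{1,0}(\bd\Om) = V \oplus V'$ (an orthogonal complement for some auxiliary metric) and rescale the metric so that vectors in $V$ have very small length compared to vectors in $V'$ and to the complex normal direction; then any unit vector has most of its mass on $V'\oplus(\text{normal})$, but $\dim(V')+1 = (n-1)-(n-q) + 1 = q$, so a sum of $q$ eigenvalues (equivalently $\la \opL v, v\ra$ summed over an orthonormal $q$-frame, or the extremal-sum characterization from Section~\ref{sec-eigenvalue}) is dominated by what happens on a $q$-dimensional complement of $V$, where we are free to make $\opL$ — and, after a parallel adjustment in the ambient normal direction, $\Hess(\vp)$ — as positive as we like relative to the shrunk $V$-directions. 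Making ``both at once'' work is the delicate step, because shrinking $V$ for $\opL$ must not destroy the $q$-positivity of $\Hess(\vp)$; here one uses that on $V$ the Hessian is \emph{positive} (not merely bounded), so shrinking $V$ only helps $\Hess(\vp)$ too, while on the $q$-dimensional complement one arranges a uniform lower bound by a compactness argument on $\bd\Om$.

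For the ``Or'' case I would pass to complements: condition $Z(q)$-type negativity on a rank-$(q+1)$ subbundle is the mirror of the previous situation under $q \leftrightarrow n-1-q$ and $H \leftrightarrow -H$, using the standard duality $H^{p,q}_{L^2}(\Om)$ versus the conjugate/complementary degree (or, more directly, running the BKMH identity with the roles of the $\dbar$- and $\dbars$-terms interchanged, which is where the extra hypothesis~(3), nondegeneracy of $\Hess(\vp)|_{T^{1,0}(\bd\Om)}$, enters — it guarantees the relevant boundary form has the right rank so the complementary estimate closes). Concretely I would set $\qt = q+1$ and $S = -\Hess(\vp)$ in Proposition~\ref{prop-main}, redo the metric shrinking on the common \emph{negative} subbundle, and invoke the no-K\"ahler $L^2$ vanishing theorem in its form adapted to the negative-eigenvalue hypothesis. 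The main obstacle throughout is the simultaneous strict $q$-positivity at $\bd\Om$: Proposition~\ref{prop-main} hands us the interior for free, but it demands as input a metric that already works on the boundary for \emph{both} forms, and reconciling the Levi form's signature with the Hessian's signature in a single rescaled metric — uniformly along the compact boundary — is the crux of the argument.
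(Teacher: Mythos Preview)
Your overall architecture matches the paper's: build a boundary metric from the common-sign subbundle, feed it as the seed $g_0$ into Proposition~\ref{prop-main}, and finish with Theorem~\ref{thm:L^2 theory, with weights}. But two of the key steps are missing or incorrect as written.

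\textbf{(Either case.)} First, the boundary rescaling you describe is backwards. The paper's construction (Theorem~\ref{thm-keyconstruction}) makes the \emph{complement} $V^\perp$ expensive, $h=\gamma+\kappa\,\gamma(P_{V^\perp}\cdot,P_{V^\perp}\cdot)$, so that an $h$-orthonormal $q$-frame projects with total $\gamma$-mass at least $1$ onto $V$ (Corollary~\ref{cor-lowerbound}); positivity of $\opL_\rho$ and $\mathcal H_\varphi$ on $V$ then forces each trace to be positive. You instead shrink $V$ and claim that ``on the $q$-dimensional complement we are free to make $\opL$ as positive as we like'' --- but $\opL$ is given and may well be negative on $V'$, so nothing controls the trace there. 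Second, and more seriously, Proposition~\ref{prop-main} with $E=T^{1,0}M$ and $F=\partial\Omega$ needs $\mathcal H_\varphi$ to be strictly $q$-positive on the \emph{full} rank-$n$ bundle $T^{1,0}M|_{\partial\Omega}$, not just on the rank-$(n-1)$ bundle $T^{1,0}(\partial\Omega)$. Mere metric scaling in the normal direction will not bridge this gap: if a $q$-plane $W$ contains the normal, its tangential shadow is only $(q-1)$-dimensional, and you have no $(q-1)$-positivity to invoke. The paper resolves this by \emph{modifying the weight}, replacing $\varphi$ by $\varphi_\epsilon=\varphi+\delta_0\epsilon\,\chi(\epsilon^{-1}\rho)$; the convex cutoff contributes a large term $\delta_0\epsilon^{-1}\chi''(0)\,\partial\rho\wedge\dbar\rho$ in the normal direction, which supplies the missing positivity exactly when $W$ leans away from $T^{1,0}(\partial\Omega)$. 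Your ``parallel adjustment in the ambient normal direction'' does not capture this idea.

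\textbf{(Or case.)} The parameter should be $\tilde q=n-q$, not $q+1$: with $S=-\mathcal H_\varphi$ having at least $q+1$ positive eigenvalues, the hypothesis $d-\tilde q+1\le q+1$ of Proposition~\ref{prop-main} forces $\tilde q\ge n-q$, and Theorem~\ref{thm:L^2 theory, with weights} asks precisely for strict $(n-q)$-positivity of $-\mathcal H_\varphi$. The nondegeneracy hypothesis (3) is not a rank condition on a ``boundary form'' in the BKMH identity; it is used, via the Implicit Function Theorem, to choose a transverse $L_n^\epsilon$ with $\partial\dbar\varphi(L_n^\epsilon,\bar L)=0$ for all tangential $L$, so that the complex Hessian block-diagonalizes in the extended metric $g_0^\epsilon$. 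Scaling $\epsilon$ then makes the normal eigenvalue $\mu_n^\epsilon$ small enough that both the strict $(n-q)$-positivity and the extra condition~(3) of Theorem~\ref{thm:L^2 theory, with weights} hold. No weight modification is needed here, but the specific choice of normal direction is essential and replaces the role played by $\varphi_\epsilon$ in the first case.
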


Notice that the hypotheses of Theorem~\ref{thm:continuously varying} imply not only that the domain $\Omega$ satisfies condition $Z(q)$, but also that
of the two mutually exclusive conditions constituting $Z(q)$ (that the Levi form has $n-q$ positive or $q+1$ negative eigenvalues)  there is exactly one
which is satisfied at each point of the boundary.  It would be interesting to understand how to extend  to general $Z(q)$-domains
the techniques traditionally applied to annuli in $\cx^n$ or Stein manifolds (see \cite{shaw2010annuli, shaw2011annuli, LiShaw, ChHa20,ChHa21}).

In Theorem~\ref{thm:continuously varying},
the continuity of the subbundle of common positive directions of the two Hermitian forms may be a strong hypothesis.  Indeed, in Corollary \ref{cor:discontinuous_subbundle}, we will see that there exists an example of a smoothly parameterized family of Hermitian forms such that each form admits a positive eigenvalue but there does not exist a continuously parameterized vector field on which each Hermitian form is positive.  Fortunately, we can prove our result when $q=n-1$ without requiring continuity of the subbundle.  Indeed, we have the following.

\begin{thm}\label{thm:q_equals_n_minus_one} Let $M$ be an $n$-dimensional complex manifold and let $\Om\subseteq M$ be a smoothly bounded relatively compact domain.
 Suppose that there exists a
	smooth function $\vp$ defined in a neighborhood of $\ol{\Omega}$ such that
	\begin{enumerate}
		\item for every point $p\in\bd\Omega$, there exists a vector $L\in T_p^{1,0}(\bd\Omega)$ on which both the Levi form of $\bd\Om$  and the complex Hessian of $\vp$ are positive, and
		\item the complex Hessian of $\vp$ has at least $2$ positive eigenvalues at each point of $\ol{\Om}$.
	\end{enumerate}
Then the $L^2$ cohomology $H^{p,n-1} _{L^2}(\Om)$ of $\Omega$ in degree $(p,n-1)$ vanishes for $0 \leq p \leq n$.
\end{thm}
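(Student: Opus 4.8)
It suffices to produce a single Hermitian metric $g$ on $T^{1,0}M$ over a neighborhood of $\ol\Om$ such that the complex Hessian of $\vp$ is strictly $(n-1)$-positive with respect to $g$ at every point of $\ol\Om$, and the Levi form of $\bd\Om$ is strictly $(n-1)$-positive with respect to $g$ at every point of $\bd\Om$. Indeed, once such a metric is in hand, Theorem~\ref{thm:L^2 theory, with weights}, applied with the weight $\vp$ in degree $(p,n-1)$, gives $H^{p,n-1}_{L^2}(\Om)=0$ for every $p$. Write $\mathcal H$ for the complex Hessian of $\vp$ and $\mathcal L$ for the Levi form of $\bd\Om$.

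\textbf{Why $q=n-1$ is special.} The feature that makes this attainable without the continuity hypothesis of Theorem~\ref{thm:continuously varying} is that for $q=n-1$ strict $q$-positivity is a \emph{sum} (trace-type) condition rather than a \emph{count} condition. Since $T^{1,0}(\bd\Om)$ has rank $n-1$, $\mathcal L$ is strictly $(n-1)$-positive with respect to $g$ exactly when its $g$-trace on $T^{1,0}(\bd\Om)$ is positive; since $T^{1,0}M$ has rank $n$, $\mathcal H$ is strictly $(n-1)$-positive with respect to $g$ exactly when the sum of the $n-1$ smallest of its $g$-eigenvalues is positive, equivalently when its $g$-trace exceeds its top $g$-eigenvalue. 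Such sum conditions, unlike the count conditions governing general $Z(q)$, are stable under the kind of local-to-global patching that does not require a globally continuous choice of positive directions (whose general impossibility is recorded in Corollary~\ref{cor:discontinuous_subbundle}).

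\textbf{The construction.} For the Hessian, hypothesis (2) says $\mathcal H$ has at least $2=n-(n-1)+1$ strictly positive eigenvalues at each point of $\ol\Om$, hence (shrinking the neighborhood) everywhere; this is a pointwise condition, and it is precisely what lets Proposition~\ref{prop-main} (applied to $E=T^{1,0}M$ with $\qt=n-1$) cope with discontinuous data. So it remains to build, near $\bd\Om$, a metric handling $\mathcal L$. Fix a background metric $h$. At each $p\in\bd\Om$, hypothesis (1) supplies a direction $L_p\in T^{1,0}_p(\bd\Om)$ positive for both $\mathcal L$ and $\mathcal H$; and, since inertia is additive and the $\mathcal H$-orthocomplement of the $\mathcal H$-positive vector $L_p$ still carries an $\mathcal H$-positive direction, a complex $2$-plane $\Pi_p\subseteq T^{1,0}_pM$ containing $L_p$ on which $\mathcal H$ is positive definite. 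Extend $L_p$ and $\Pi_p$ smoothly over a neighborhood $U_p$, with $L_p$ extended so as to stay tangent to $\bd\Om$ along $\bd\Om\cap U_p$, and shrink $U_p$ so the two positivity properties persist. On $U_p$ take a metric $g_p$ equal to $h$ off $\Pi_p$ but heavily shrunk along $\Pi_p$ --- most heavily along $L_p$, using two widely separated scales $1\ll S_p\ll T_p$. Then with respect to $g_p$ the $\mathcal L$-trace on $T^{1,0}(\bd\Om)$ is large and positive (the huge gain along the tangent direction $L_p$ swamping any loss in the other direction), while $\mathcal H$ acquires two eigenvalues of size comparable to $S_p$ from $\Pi_p$ and its most negative eigenvalue stays bounded below independently of $S_p,T_p$ (a direction on which $\mathcal H$ is non-positive cannot lie deep inside the $\mathcal H$-positive plane $\Pi_p$, so is only mildly affected). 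Cover the compact $\bd\Om$ by finitely many $U_{p_k}$, choose a subordinate partition of unity $\{\chi_k\}$ with $\sum_k\chi_k\equiv 1$ on a collar $\mathcal N$ of $\bd\Om$, and set $g_0:=\bigl(\sum_k\chi_k\,g_{p_k}^{-1}\bigr)^{-1}$ on $\mathcal N$ (the convex combination of the inverse metrics). One verifies that $g_0$ still makes $\mathcal L$ strictly $(n-1)$-positive on $\bd\Om$ and $\mathcal H$ strictly $(n-1)$-positive on a closed collar $F\subseteq\mathcal N$ with $\bd\Om\subseteq F$. Finally, extend $g_0$ arbitrarily to a metric on $T^{1,0}M$ over a neighborhood of $\ol\Om$ and feed $(F,g_0)$ to Proposition~\ref{prop-main} with $\qt=n-1$: the output is a metric $g$ with $\mathcal H$ strictly $(n-1)$-positive everywhere on $\ol\Om$ and $g=g_0$ near $\bd\Om$, hence also $\mathcal L$ strictly $(n-1)$-positive on $\bd\Om$. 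This $g$ is the metric required by the reduction, and the theorem follows.

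\textbf{The main obstacle.} The delicate point --- and the step I expect to absorb most of the work --- is the verification that the patched collar metric $g_0$ still satisfies \emph{both} sum conditions. For $\mathcal L$ the trace behaves well under the construction, though it must still be tracked through the restriction to the boundary tangent bundle. The harder half is $\mathcal H$: at a point covered by several patches one must control the interaction of the differently oriented shrunk $2$-planes $\Pi_{p_k}$, checking that two eigenvalues of $\mathcal H$ remain large in the metric $g_0$ while no eigenvalue is driven spuriously negative. It is exactly here that hypothesis (2) --- two positive Hessian eigenvalues, rather than the single one forced by condition $Z(n-1)$ --- is indispensable, since a lone large eigenvalue would be lost upon passing to the sum of the remaining $n-1$.
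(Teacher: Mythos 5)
Your high-level strategy is sound and matches the paper's: reduce to producing a metric making both the Levi form and the complex Hessian strictly $(n-1)$-positive, then feed a boundary metric to Proposition~\ref{prop-main} with $F=\partial\Omega$, then invoke Theorem~\ref{thm:L^2 theory, with weights}. You also correctly isolate what makes $q=n-1$ tractable without the continuity hypothesis: for $\mathcal{L}$ on the rank-$(n-1)$ bundle $T^{1,0}(\partial\Omega)$, strict $(n-1)$-positivity is a full-trace condition, and for $\mathcal{H}$ on the rank-$n$ bundle it is $\Tr - \lambda_{\max}>0$. This is exactly the insight underlying the paper's Theorem~\ref{thm-keyconstruction_top_degree}.

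However, the step you flag as ``the main obstacle'' is precisely where a genuine gap remains, and the paper's resolution is fundamentally different from what you propose. You build local metrics $g_{p_k}$ by shrinking $h$ along a $\mathcal{H}$-positive $2$-plane $\Pi_{p_k}$ (with an inner scale along $L_{p_k}$), then glue via $g_0=\bigl(\sum_k\chi_k\,g_{p_k}^{-1}\bigr)^{-1}$. The Levi-trace side of this gluing is clean, since $\Tr_{g_0}\mathcal{L}=\Tr(g_0^{-1}\mathcal{L})=\sum_k\chi_k\Tr_{g_{p_k}}\mathcal{L}$ is linear in the inverse metric (modulo the issue that restricting the harmonic mean to $T^{1,0}(\partial\Omega)$ need not equal the harmonic mean of the restrictions, which you would need to arrange). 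But for $\mathcal{H}$, strict $(n-1)$-positivity requires $\Tr_{g_0}\mathcal{H}>\lambda_{\max}^{g_0}(\mathcal{H})$, and $\lambda_{\max}^{g_0}$ is neither linear nor monotone under convex combination of inverse metrics; different patches shrink along different $2$-planes $\Pi_{p_k}$ with different scales $S_{p_k},T_{p_k}$, and controlling the resulting eigenvalue distribution of $\mathcal{H}$ in the harmonic mean is exactly the hard part that remains unverified. Moreover, you silently fold in a second nontrivial step: passing from positivity data on $T^{1,0}(\partial\Omega)$ to strict $(n-1)$-positivity of the \emph{full} Hessian on $T^{1,0}M$ over a collar (the paper achieves this with the modified weight $\varphi_\epsilon=\varphi+\delta_0\epsilon\chi(\epsilon^{-1}\rho)$, a calculation you replace by absorbing normal-direction control into $\Pi_p$ without proof).

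The paper sidesteps the gluing problem entirely. Instead of local-to-global patching, Lemma~\ref{lem:two_Hermitian_forms} gives a \emph{canonical, pointwise continuous} map $(Q_1,Q_2)\mapsto h(Q_1,Q_2)$ from pairs of Hermitian forms sharing a common positive direction to inner products making both traces positive. The key device is the function $\xi(x)=-\log\det\bigl(\langle\cdot,\cdot\rangle-x_1Q_1-x_2Q_2\bigr)$, a convex exhaustion of the open set where the form is positive definite, whose gradient $\nabla\xi(x)=(\Tr_x Q_1,\Tr_x Q_2)$ records exactly the two traces; one then locates a distinguished point on a level curve where both gradient components are positive, and chooses it to depend continuously on $(Q_1,Q_2)$. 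Applying this fiberwise to $(\mathcal{L}_\rho(p),\mathcal{H}_\varphi|_{T^{1,0}(\partial\Omega)}(p))$ yields a continuous metric directly, which is then regularized. Because the construction is pointwise-canonical and continuous, there is nothing to glue and no interaction of local choices. If you want to rescue your argument, you would either need to make the partition-of-unity estimate precise (including a quantitative comparison across overlapping $\Pi_{p_k}$'s), or replace it with a convexity argument of the paper's type.
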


Notice that in Theorems~\ref{thm-zq}, \ref{thm:continuously varying} and \ref{thm:q_equals_n_minus_one},     the hypotheses
are independent of a choice of metric, i.e., they are determined solely
by the complex structure. In particular, the $L^2$ spaces of forms on $\Om$
are defined with respect to \emph{any} Hermitian metric on $M$. Notice that though the inner product and norm of $L^2_{p,q}(\Omega)$ depend on the metric chosen on $M$, the space $L^2_{p,q}(\Omega)$ itself is determined independently of the choice of the Hermitian metric on $M$. It follows that
the ``maximally realized" $\dbar$-operator $\dbar: L^2_{p,q-1}(\Om)\to L^2_{p,q}(\Om)$ is also defined
independently of the choice of the metric, as is the corresponding $L^2$-cohomology $H^{p,q}_{L^2}(\Omega)$
of the domain $\Omega$.


\section{Definitions and preliminaries}
\subsection{Hermitian forms and Eigenvalues}
\label{sec-eigenvalue}
   Recall that a \emph{Hermitian form} on a
complex vector space $E$ is a map $H:E\times E\to \cx$ such that for $u,v,w\in E$ and $a\in \cx$ we have  $H(au+v,w)=aH(u,\violet{w})+ H(v,w)$, and
$H(u,v)=\ol{H(v,u)}$. The
 Hermitian form  $H$ is a \emph{Hermitian metric} or \emph{inner product} if it is \emph{positive definite}\violet{:} $H(v,v)>0$. According to \emph{Sylvester's law of inertia},  we can write
\[ H(z,z)=\sum_{j=1}^{\red d} \epsilon_j \abs{\ell_j(z)}^2, \]
where $\epsilon_j\in \{1, -1, 0\}$, $\ell_j:E\to \cx$ are linearly independent linear forms, and $d$ is the dimension of $E$.  The 3-tuple of integers  counting respectively the number of
positive, negative and zero coefficients among the $\epsilon_j$'s is known as the
\emph{inertia} or \emph{signature} of the form and is an invariant completely classifying Hermitian forms on $E$ up to linear automorphisms.  By a standard abuse of language we will
refer to the integers constituting the inertia as the number of positive, negative and zero \emph{eigenvalues} of $H$.

     Given a Hermitian metric $g$ on $E$, and a Hermitian form $H$ on $E$, recall that $v\in E$ is an \emph{eigenvector of $H$ with eigenvalue $\lambda\in\mathbb{R}$ with respect to $g$} if $H(v,u)=\lambda g(v,u)$ for all $u\in E$. Equivalently, we may define an operator $H^g:E\to E$ by
 \[ g(H^gu,v)= H(u,v) \quad \text{ for all } u,v\in E,\]
and the eigenpairs of the Hermitian form $H$ with respect to $g$ will correspond to the eigenpairs of the operator $H^g$. It is clear that $H^g:E\to E$ is a Hermitian operator (with respect to the metric $g$), and consequently, 
its eigenvalues are real, and the eigenvectors can be taken to be orthogonal with respect to the metric.
We will denote by $\lambda_j^g(H)$ the $j$-th smallest eigenvalue of $H^g$, where eigenvalues are counted
with multiplicity. Therefore
\begin{equation}\label{eq-eigens}
	\lambda_1^g(H)\leq \lambda_2^g(H)\leq \dots \leq \lambda_d^g(H).
\end{equation}
From the spectral theorem for each nonzero vector $z\in E$ we have the following:
\begin{equation}\label{eq-rayleigh}
	\lambda_{1}^g(H) \leq  \frac{H(z,z)}{g(z,z)} \leq \lambda_d^g(H),
\end{equation}
with equality if $z$ is an eigenvector of the corresponding eigenvalues. This follows by writing $  {H(z,z)}=g(H^gz,z)$ in terms of a basis of orthonormal eigenvectors of $H^g$. 
Notice that a Hermitian form $H$ is strictly $q$-positive with respect to the metric $g$ if and only if
\begin{equation}\label{eq-qpositive}
	\sum_{k=1}^{q} \lambda^g_k(H)>0,
\end{equation}
i.e., the sum of the \emph{smallest} $q$ eigenvalues is positive.


A Hermitian form (resp., metric) on a complex vector bundle on a smooth manifold is the assignment of a Hermitian form  (resp., metric)  to each fiber.
Given a Hermitian form $H$ and a Hermitian metric $g$
on a vector bundle, we say that $H$ is (strictly) $q$-positive  with respect to $g$ if
it is  so on each fiber with respect to the metric on that fiber.

Recall that given a Hermitian form $H$ and a metric $g$ on a vector space $V$, the trace of $H$ with respect to $g$ is the sum of all the eigenvalues of $H$ with respect to $g$:
\[ \tr_g(H)= 	\sum_{k=1}^{\dim V} \lambda^g_k(H).\]
It is well-known that
\[ \tr_g(H)= 	\sum_{k=1}^{\dim V} H(t_k, t_k)\]
for each orthonormal basis $\{t_k\}$ of $V$. The following
characterization of strict $q$-positivity is classical and is a consequence
of the Schur Majorization theorem (\cite[Theorem~4.3.45]{HoJo13} and  \cite{ikebe}; see also  \cite[Lemma 4.7]{Str10}).
\begin{thm}\label{thm-schur}
	A Hermitian form on a finite dimensional inner-product space  is strictly $q$-positive if and only if its restriction to each $q$-dimensional linear subspace has positive trace.
\end{thm}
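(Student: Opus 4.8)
The plan is to prove both directions from the trace formula $\tr_g(H)=\sum_{k=1}^{\dim V}H(t_k,t_k)$ valid for any $g$-orthonormal basis $\{t_k\}$, combined with the min-max / Rayleigh characterization \eqref{eq-rayleigh}.

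First, the easy direction: suppose $H$ is strictly $q$-positive with respect to the inner product $g$, and let $W\subseteq V$ be a $q$-dimensional subspace. Restrict $g$ to $W$ to make $(W,g|_W)$ an inner-product space, and choose a $g$-orthonormal basis $\{t_1,\dots,t_q\}$ of $W$. Then $\tr_{g|_W}(H|_W)=\sum_{k=1}^q H(t_k,t_k)$. I would show this sum is at least $\sum_{k=1}^q\lambda_k^g(H)$, which is positive by \eqref{eq-qpositive}. The inequality $\sum_{k=1}^q H(t_k,t_k)\ge\sum_{k=1}^q\lambda_k^g(H)$ is exactly the statement that, among all choices of $q$ $g$-orthonormal vectors in $V$, the sum of the quadratic-form values is minimized by the bottom $q$ eigenvectors; this is the standard Ky Fan / Courant–Fischer extremal characterization of partial eigenvalue sums for the Hermitian operator $H^g$. (Concretely: expand each $t_k$ in a $g$-orthonormal eigenbasis of $H^g$, write $\sum_k H(t_k,t_k)=\sum_j\lambda_j^g(H)\,m_j$ where $m_j=\sum_k|g(t_k,e_j)|^2\in[0,1]$ and $\sum_j m_j=q$, and note that such a ``doubly substochastic'' weighting of the $\lambda_j^g(H)$ is at least $\sum_{k=1}^q\lambda_k^g(H)$.)

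Second, the converse: suppose every $q$-dimensional subspace $W$ satisfies $\tr_{g|_W}(H|_W)>0$. Take the bottom eigenvectors: let $e_1,\dots,e_d$ be a $g$-orthonormal eigenbasis of $H^g$ with $H^g e_k=\lambda_k^g(H)e_k$, and set $W_0=\Span\{e_1,\dots,e_q\}$. Then $\tr_{g|_{W_0}}(H|_{W_0})=\sum_{k=1}^q H(e_k,e_k)=\sum_{k=1}^q\lambda_k^g(H)$, which by hypothesis is positive; by \eqref{eq-qpositive} this is precisely strict $q$-positivity.

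The statement of the theorem is phrased for an abstract inner-product space without reference to an ambient metric, so a small preliminary remark is in order: ``trace'' of $H|_W$ means trace with respect to $g|_W$, the restriction of the ambient inner product to $W$. I would open with one sentence making this explicit, and noting that \eqref{eq-qpositive} is the definition of strict $q$-positivity we use. There is no serious obstacle here; the main point requiring care is the extremal inequality $\sum_{k=1}^q H(t_k,t_k)\ge\sum_{k=1}^q\lambda_k^g(H)$ over all $g$-orthonormal $q$-frames, which I would either cite directly as the Ky Fan maximum/minimum principle (equivalently Courant–Fischer applied to partial sums) or derive in two lines via the doubly-substochastic-weights argument sketched above. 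Everything else is bookkeeping with \eqref{eq-eigens}, \eqref{eq-qpositive}, and the trace formula.
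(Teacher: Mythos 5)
Your proof is correct. Note, however, that the paper itself offers no proof of this statement: it is quoted as a classical fact, with references to the Schur majorization theorem (Horn--Johnson, Theorem 4.3.45, and Ikebe et al.; see also Straube, Lemma 4.7), so there is no internal argument to compare against line by line. Your blind proof is essentially a self-contained rendering of that classical argument. The converse direction (span the bottom $q$ eigenvectors of $H^g$ and compute the trace of the restriction in that $g$-orthonormal basis) is exactly right and immediate from \eqref{eq-qpositive}. The forward direction is where the content lies, and your reduction is sound: writing $\sum_{k=1}^q H(t_k,t_k)=\sum_j \lambda_j^g(H)\,m_j$ with $m_j=\sum_k\abs{g(t_k,e_j)}^2$, the constraints $0\le m_j\le 1$ (Bessel) and $\sum_j m_j=q$ (Parseval, since each $t_k$ is a unit vector) give $\sum_j\lambda_j^g(H)m_j\ge\sum_{k=1}^q\lambda_k^g(H)$ by the standard comparison against $\lambda_q^g(H)$; this is precisely the Ky Fan extremal principle you cite, and deriving it in the two lines you sketch makes the theorem self-contained rather than a citation. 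Your preliminary remark that ``trace'' of $H|_W$ means trace with respect to the induced inner product $g|_W$ is also the right clarification, since that is how the theorem is applied later in the paper (e.g.\ in the proof of Theorem~\ref{thm-keyconstruction}). No gaps.
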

\subsection{The Levi form }\label{sec:levi}
It is possible to define the Levi form  of a hypersurface
fully intrinsically, with values in the line bundle of bad directions (see \cite{leviform}). For our purposes,
it will suffice to use a definition of the Levi form in terms of a defining function.

Let $\Omega$ be a smoothly bounded and relatively compact domain in a
complex manifold $M$, and let $\rho$ be a defining function of $\Omega$, i.e.,
$\rho$ is a smooth function in a neighborhood $U$ of  $\partial \Omega$ such that $\{\rho<0\}=U\cap \Omega$ and $d\rho\not=0$ on $\partial \Omega$. One then
defines the Levi form of $\partial\Omega$ as the Hermitian form on $T^{1,0}(\partial \Om)$ given by
\begin{equation}
	\label{eq-lrho}\opL_\rho(X,Y)= \partial \dbar \rho(X,\ol{Y}), \quad X,Y\in T_p^{1,0}(\partial\Omega).
\end{equation}
The defining function of the domain $\Omega$ is not unique, but if $r$ is another defining function, then there is a smooth function $f>0$ defined
near $\partial \Omega$ such that $\rho=f\cdot r$. It then easily follows that
\[\opL_\rho=f \opL_r,\]
so that at each $p\in \partial \Omega$, for $X\in T_p^{1,0}(\partial \Omega)$
the real number $\opL_\rho(X,X)$ is positive (resp., negative, resp., zero)
if and only if $\opL_r(X,X)$ is positive (resp., negative, resp., zero).
We see therefore that the conditions on the Levi form in the hypothesis of
Theorem~\ref{thm:continuously varying} are invariantly defined independently
of the choice of the defining function.

In Theorem~\ref{thm:L^2 theory, with weights} below, we are given in addition to the domain $\Omega$, a
Hermitian metric $g$ on the manifold $M$.  Since $\mathcal{L}_\rho=f\mathcal{L}_r$, $\mathcal{L}_\rho$ and $\mathcal{L}_r$ have the same collection of eigenvectors in $T^{1,0}_p(\partial\Omega)$ with respect to $g$,
and if $\{\lambda_j(p)\}$ are the eigenvalues of $\opL_\rho(p)$, then the eigenvalues of $\opL_r(p)$ are clearly $\{f(p)\cdot \lambda_j(p)\}$.
Since $f>0$, it follows that the condition of strict $q$-positivity in the hypotheses of Theorems~\ref{thm:L^2 theory, with weights}  and \ref{thm:L^2 theory, no weight} are invariantly defined independently of the choice of defining function.

\section{Some results from \texorpdfstring{$L^2$}{L2}-theory}

\subsection{Unweighted \texorpdfstring{$L^2$}{L2} result} The following will be needed in the proof of Theorem~\ref{thm-zq} and is proven in Section \ref{subsec:proof no weight L^2}.
\begin{thm}\label{thm:L^2 theory, no weight} Let $M$ be a complex manifold, $\Om\subseteq M$ a relatively compact $C^3$ domain with defining function $\rho$,  $0\leq p \leq n$, and $1 \leq q \leq n-1$.
	Let $h$ be a Hermitian metric on $M$ such that on each connected component of $\bd\Om$, either
	the Levi form, $\opL_\rho^h$ is  strictly $q$-positive or the negative of the Levi form $-\opL_\rho^h$ is strictly $(n-q-1)$-positive.
	Then  the $\dbar$ operator on $(p,q)$-forms on $\Omega$ satisfies the Folland-Kohn basic estimate   \eqref{eq-follandkohn}.
	%
\end{thm}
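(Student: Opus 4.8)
The plan is to establish the Folland–Kohn basic estimate \eqref{eq-follandkohn} from the Bochner–Kohn–Morrey–Hörmander identity (without any weight, i.e.\ taking $\varphi\equiv 0$), localizing near the boundary and controlling error terms arising from the non-Kähler nature of $h$ by the right-hand side of \eqref{eq-follandkohn}. First I would recall the BKMH identity for $f\in\Dom(\dbar)\cap\Dom(\dbars)$ of bidegree $(p,q)$, which in the unweighted case reads, up to lower-order terms,
\[
\norm{\dbar f}_{L^2(\Om)}^2 + \norm{\dbars f}_{L^2(\Om)}^2 = \norm{\bar\nabla f}_{L^2(\Om)}^2 + \int_{\bd\Om} \Theta_\rho(f,f)\, dS + (\text{curvature and torsion terms}),
\]
where $\Theta_\rho$ is the quadratic boundary form built from the Levi form $\opL_\rho^h$ acting on the components of $f$ in a local orthonormal frame adapted to $\bd\Om$. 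The non-Kähler torsion terms are zeroth order in $f$ paired against first-order terms, so by Cauchy–Schwarz with a small parameter they are absorbed into $\norm{\bar\nabla f}^2 + C\norm{f}^2$, and the curvature terms are manifestly $O(\norm{f}^2_{L^2(\Om)})$; these are harmless for our purpose since $\norm{f}^2_{L^2(\Om)}$ appears on the right of \eqref{eq-follandkohn}.

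The crux is the boundary term. The standard computation (see \cite{FoKo72,ChSh01,Str10}) shows that for a $(p,q)$-form $f$ satisfying the $\dbar$-Neumann boundary condition, the pointwise boundary density $\Theta_\rho(f,f)$ at a point $x\in\bd\Om$ is, after diagonalizing $\opL_\rho^h(x)$ with eigenvalues $\mu_1\le\cdots\le\mu_{n-1}$, a sum of the form $\sum_{|K|=q}\big(\sum_{k\in K}\mu_k\big)|f_K|^2$ over the tangential components $f_K$, plus a term controlled by the nontangential component which vanishes under the Neumann condition. The key algebraic input is Definition~\ref{def-qpositive} together with \eqref{eq-qpositive}: if $\opL_\rho^h$ is strictly $q$-positive then $\sum_{k\in K}\mu_k \geq \sum_{k=1}^q\mu_k \geq c > 0$ for every $K$ with $|K|=q$ (uniformly by compactness of $\bd\Om$), so $\Theta_\rho(f,f)\geq c|f|^2$ pointwise on that component of $\bd\Om$, giving exactly the desired control of $\norm{f}^2_{L^2(\bd\Om)}$. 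On a component where instead $-\opL_\rho^h$ is strictly $(n-q-1)$-positive, I would pass to the conjugate/complementary picture: the boundary form for $(p,q)$-forms, re-expressed via the Hodge-$*$ duality between $(p,q)$- and $(n-p,n-q-1)$-type boundary data (equivalently, bundling the $q$ ``excluded'' tangential indices rather than the $q$ included ones), becomes $\sum_{|K|=q}\big(-\sum_{k\notin K,\ k\le n-1}\mu_k\big)|f_K|^2$ up to Neumann-vanishing terms, and strict $(n-q-1)$-positivity of $-\opL_\rho^h$ gives $-\sum_{k\notin K}\mu_k \geq c>0$; I should double-check the index bookkeeping here, as this is where sign conventions are easiest to get wrong. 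Either way we obtain $\int_{\bd\Om}\Theta_\rho(f,f)\,dS \geq c\,\norm{f}^2_{L^2(\bd\Om)} - C\norm{f}^2_{L^2(\Om)}$, the last term coming from the Neumann-vanishing remainder estimated by Cauchy–Schwarz.

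Combining: for $f\in\Dom(\dbar)\cap\Dom(\dbars)$,
\[
c\,\norm{f}^2_{L^2(\bd\Om)} \leq \int_{\bd\Om}\Theta_\rho(f,f)\,dS + C\norm{f}^2_{L^2(\Om)} \leq \norm{\dbar f}^2_{L^2(\Om)} + \norm{\dbars f}^2_{L^2(\Om)} + C'\norm{f}^2_{L^2(\Om)},
\]
where I have discarded the nonnegative $\norm{\bar\nabla f}^2$ term, which is exactly \eqref{eq-follandkohn} after dividing by $c$. The only subtlety requiring care is that the identity as stated needs $f$ smooth up to the boundary; since $\Om$ is only $C^3$ one works with the standard density argument (Friedrichs-type approximation of elements of $\Dom(\dbar)\cap\Dom(\dbars)$ in the graph norm by forms smooth up to $\bar\Om$, valid for $C^3$ boundaries), so the estimate extends to the full form domain.

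\textbf{Main obstacle.} The genuinely delicate step is the boundary term on components satisfying the $-\opL_\rho^h$ strictly $(n-q-1)$-positive alternative: one must verify that the $\dbar$-Neumann boundary quadratic form for $(p,q)$-forms, when the Levi form is highly \emph{negative}, can be rewritten so that the relevant eigenvalue sums are precisely the $(n-q-1)$-fold sums of $-\opL_\rho^h$ (rather than, say, $(n-q)$-fold sums), and that the ``wrong-sign'' tangential directions are absorbed without cost using the Neumann condition and the interior $L^2$-norm. This is the point where the precise combinatorics of $Z(q)$ — the asymmetry between the $n-q$ positive versus $q+1$ negative eigenvalue alternatives — actually enters, and it is what makes the hypothesis $-\opL_\rho^h$ strictly $(n-q-1)$-positive (an $(n-q-1)$-positivity condition, matching $q+1$ negative directions since $(n-1)-(q+1)+1 = n-q-1$) the natural dual to $q$-positivity of $\opL_\rho^h$.
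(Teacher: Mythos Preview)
Your overall structure matches the paper's proof closely: use the unweighted Bochner--Kohn--Morrey--H\"ormander identity locally, show the boundary term dominates $c\|f\|_{L^2(\bd\Om)}^2$, and absorb the torsion/curvature errors using the gradient term and $\|f\|_{L^2(\Om)}^2$. The strictly $q$-positive case is exactly as the paper handles it.

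The gap is in the $(n-q-1)$-positive case. Your claim that the boundary quadratic form ``re-expressed via Hodge-$*$ duality'' becomes $\sum_{|K|=q}\big(-\sum_{k\notin K}\mu_k\big)|f_K|^2$ while the interior term stays $\|\bar\nabla f\|^2$ is not correct: if you keep the original identity, the boundary term remains $\sum_{|K|=q}\big(\sum_{k\in K}\mu_k\big)|f_K|^2$, which is \emph{not} bounded below under the negative hypothesis. The modified boundary term does not come from any Hodge-duality rewriting of the \emph{same} identity; it requires a \emph{different} basic identity. The paper obtains it by integrating each tangential $\|\bar L_k f_J\|^2$ (for $k<n$) by parts, using $[\delta_k,\bar L_k]$ commutators, to replace it by $\|L_k f_J\|^2$ plus a boundary correction $-\int_{\bd\Om}\rho_{kk}|f_J|^2$. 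Summing over $k<n$ yields the extra $-\Tr(\opL_\rho^h)|f_J|^2$ boundary term, so the total boundary density becomes $\sum_{k\in K}\mu_k - \sum_{k=1}^{n-1}\mu_k = -\sum_{k\notin K}\mu_k$, which is the algebra you wrote. But simultaneously the interior ``gradient'' term changes from $\sum_k\|\bar L_k f_J\|^2$ to $\|\bar L_n f_J\|^2 + \sum_{k<n}\|L_k f_J\|^2$, and the error $E(f)$ must now be bounded against \emph{these} derivatives (the paper records a second estimate for $E(f)$, \eqref{eq:E_IBP_estimate}, precisely for this purpose). Your proposal, which discards $\|\bar\nabla f\|^2$ unchanged and treats the boundary rewrite as free, would leave you unable to absorb $E(f)$ consistently.

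You correctly flagged this step as the main obstacle; the missing idea is that it is an integration-by-parts maneuver affecting both interior and boundary terms, not a purely algebraic duality on the boundary form alone.
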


\begin{rem}
It is well known that $C^2_{p,q}(\bar\Om) \cap \Dom(\dbars)$ is dense in $\Dom(\dbar)\cap\Dom(\dbars)$ (see, e.g.,  \cite[p.121]{Hor65})
so it suffices to assume that our forms have coefficients that \violet{are} at least $C^2$ smooth on $\bar\Om$.
Further, by the comments at the end of Section~\ref{sec:levi}, the fact that $\opL_\rho^h$ is strictly $q$-positive is independent of the choice of the defining function $\rho$, so the hypotheses do not depend on the choice of $\rho$.
\end{rem}

The function theory in \cite[Section 5.3]{ChSh01} is largely applicable to our setup, but we have to adapt their work to domains
that are not necessarily pseudoconvex. Our analysis combines elements of
\cite{ChSh01} with ideas from \cite{Ho95} and \cite{HaRa15}.  The latter papers, however, were concerned with domains in $\C^n$ and Stein manifolds, respectively.
See also \cite[\S1.9]{Zam08} for a related discussion but for domains in $\C^n$,
and \cite[Theorem~2.14]{varmcneal} for a similar result in K\"ahler manifolds.

\subsection{Weighted \texorpdfstring{$L^2$}{L2} result}
The following is the key result that will be used to prove Theorem~\ref{thm:continuously varying} and its proof appears in Section \ref{subsec:proof of weighted L^2}.
\begin{thm}\label{thm:L^2 theory, with weights} Let $(M,h)$ be a Hermitian manifold of complex dimension $n$, let $\Om\subseteq M$ be a relatively compact domain with $C^3$ boundary $\partial \Omega$ and defining function $\rho$,  let $\varphi$ be a smooth function in a neighborhood of $\ol{ \Omega}$ and let $1 \leq q \leq n-1$. Suppose that, 	with respect to $h$:\\[2mm]
	 Either
	\begin{enumerate}
		\item $\mathcal{L}_\rho^h$  is  $q$-positive on $\partial \Omega$, and
\item on $\overline\Omega$,	the complex Hessian of $\varphi$ is  strictly $q$-positive.
	\end{enumerate}
Or
	\begin{enumerate}
		\item $-\mathcal{L}_\rho^h$  is  $(n-q-1)$-positive on $\partial \Omega$,
\item on $\overline\Omega$,	the complex Hessian of $-\varphi$ is  strictly $(n-q)$-positive, and
\item if $L_n\in T^{1,0}(M)$ is orthogonal to $T^{1,0}(\partial\Omega)$ and unit length on $\partial\Omega$ with respect to $h$, then the sum of any $(n-q)$ eigenvalues of the complex Hessian of $-\varphi$ is strictly larger than $-\ddbar\varphi(L_n,\bar L_n)$ on $\partial\Omega$.
	\end{enumerate}
	Then for each $0\leq p \leq n$ and $t$ sufficiently large, there exists a constant
	$C>0$ so that for all $(p,q)$-forms $f \in \Dom(\dbar)\cap\Dom(\dbars)$
\begin{equation}
\label{eq:basic_estimate}
\|f\|_{L^2(\Om,e^{-t\vp})}^2 \leq C \left(\|\dbar f\|_{L^2(\Om,e^{-t\vp})}^2 + \|\dbars_{t\vp} f\|_{L^2(\Om,e^{-t\vp})}^2\right).
\end{equation}
\end{thm}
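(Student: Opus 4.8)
The plan is to run the weighted Morrey--Kohn--H\"ormander (MKH) identity on the Hermitian manifold $(M,h)$ with the bundle weight $e^{-t\vp}$, and to show that under either hypothesis its curvature terms dominate $\|f\|_{L^2(\Om,e^{-t\vp})}^2$ once $t$ is large. By the Remark following Theorem~\ref{thm:L^2 theory, no weight} it suffices to treat $f\in C^2_{p,q}(\ol\Om)\cap\Dom(\dbars)$. Writing $\|\cdot\|_{t\vp}$ for $\|\cdot\|_{L^2(\Om,e^{-t\vp})}$, the weighted MKH identity has the shape
\begin{equation*}
\|\dbar f\|_{t\vp}^2+\|\dbars_{t\vp} f\|_{t\vp}^2=G_t(f)+t\int_\Om \langle \Theta_\vp f,f\rangle\, e^{-t\vp}+\int_{\bd\Om}\langle \opL_\rho f,f\rangle\,e^{-t\vp}\,dS+\mathcal E_t(f),
\end{equation*}
where $G_t(f)\ge 0$ is a sum of squared $\dbar$--derivatives of the components of $f$; $\Theta_\vp$ is the endomorphism of $\Lambda^{p,q}$ induced by $\ddbar\vp$, which in an $h$--orthonormal frame diagonalizing $\ddbar\vp$ (eigenvalues $\lambda_1\le\dots\le\lambda_n$) acts on a component $f_{I,J}$ by multiplication by $\sum_{j\in J}\lambda_j$; $\opL_\rho$ denotes here also the endomorphism of $\Lambda^{p,q}$ induced by the Levi form, and on $\bd\Om$ it only sees the tangential eigenvalues $\mu_1\le\dots\le\mu_{n-1}$ of $\opL_\rho^h$, because $f\in\Dom(\dbars)$ forces the components of $f$ carrying the normal $\dbar$--index to vanish there; and $\mathcal E_t(f)$ gathers the torsion of $h$ and the curvature of $(M,h)$ on $\Lambda^{p,q}$, all independent of $t$, so $|\mathcal E_t(f)|\le\tfrac12\|\dbar f\|_{t\vp}^2+C\|f\|_{t\vp}^2$ with $C$ independent of $t$. (Only the $t$--linear curvature term carries $t$; the $t^2$--contributions of the weight cancel in the derivation.)

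\emph{First case.} Hypothesis (2) together with compactness of $\ol\Om$ gives $\delta>0$ with $\lambda_1(z)+\dots+\lambda_q(z)\ge\delta$ for all $z\in\ol\Om$, so $\langle\Theta_\vp f,f\rangle\ge\delta|f|^2$ pointwise and $t\int_\Om\langle\Theta_\vp f,f\rangle e^{-t\vp}\ge t\delta\|f\|_{t\vp}^2$. Hypothesis (1), i.e.\ $\mu_1+\dots+\mu_q\ge 0$, together with the vanishing of the normal components on $\bd\Om$, makes the boundary integrand $\ge 0$. Discarding $G_t(f)$ and the boundary term and absorbing $\mathcal E_t(f)$, one gets $\|\dbar f\|_{t\vp}^2+\|\dbars_{t\vp} f\|_{t\vp}^2\ge(t\delta-C)\|f\|_{t\vp}^2$, which is \eqref{eq:basic_estimate} as soon as $t\ge(C+1)/\delta$.

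\emph{Second case, and the main obstacle.} Here $\langle\Theta_\vp f,f\rangle=\sum_{I,J}\bigl(\sum_{j\in J}\lambda_j\bigr)|f_{I,J}|^2$ is not bounded below, since hypothesis (2) only controls sums of $n-q$ of the $\lambda_j$. I would pass to the ``complementary'' form of the identity via the conjugate--linear operator $f\mapsto\ol{\star f}$ ($\star$ the Hodge star of $h$), which is an $L^2(\Om,e^{-t\vp})$--isometry carrying $(p,q)$--forms to $(n-p,n-q)$--forms and interchanging $\dbar$ and $\dbars$ up to sign, so it turns the level-$q$ identity into a level-$(n-q)$ one. Strict $(n-q)$--positivity of $\ddbar(-\vp)$ (hypothesis (2)) then supplies the genuine gain $t\delta\|f\|_{t\vp}^2$ in the transformed interior curvature term, while the transformed boundary term involves $-\opL_\rho$ summed over $n-1-q$ tangential eigenvalues, nonnegative precisely by hypothesis (1). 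The subtlety is that the conjugated problem is of $\dbar$--Dirichlet (Cauchy) type, so the normal $(1,0)$--direction is no longer excluded from its boundary integrand; the leftover boundary contribution is governed by the normal--normal value $\ddbar\vp(L_n,\ol L_n)$, and hypothesis (3) --- that every sum of $n-q$ eigenvalues of $\ddbar(-\vp)$ strictly exceeds $-\ddbar\vp(L_n,\ol L_n)$ on $\bd\Om$ --- is exactly the inequality needed to absorb it into the gained curvature. With the $t$--independent errors absorbed as before, \eqref{eq:basic_estimate} follows for $t$ large. I expect this second case to be the main obstacle: keeping track of which components the Hodge--star conjugation mixes, how the $\Dom(\dbars)$ boundary condition transforms, the sign of the resulting boundary integrand, and verifying that the normal--normal Hessian term it produces is exactly the one controlled by (3). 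The reduction to $C^2$ forms, the first case, and the weighted MKH identity itself (whose non--K\"ahler version only adds the harmless $t$--independent torsion terms) are routine; cf.\ \cite{Ho91,Str10} and the proof of Theorem~\ref{thm:L^2 theory, no weight}.
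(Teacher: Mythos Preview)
Your treatment of the first case is correct and matches the paper: invoke the basic identity \eqref{eqn:basic identity}, use $q$-positivity of $\opL_\rho^h$ to make the boundary integral nonnegative, and use strict $q$-positivity of $\ddbar\vp$ for the interior gain $t\delta\|f\|_{t\vp}^2$; then absorb the $t$-independent errors.

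For the second case the paper takes a different, more direct route. Rather than passing to $(n-p,n-q)$-forms via $\overline{\star}$, it stays at level $(p,q)$ and integrates by parts each tangential term $\|\bar L_k f_J\|_{t\vp}^2$ for $1\le k\le n-1$ in \eqref{eqn:basic identity}, converting it to $\|\delta_k^{t\vp}f_J\|_{t\vp}^2$ plus commutators; this yields the alternate identity \eqref{eqn:basic identity, IBP}. The interior curvature term becomes
\[
t\sum_{I}\sum_{j,k=1}^n(\vp_{jk}f_{jI},f_{kI})_{t\vp}-t\sum_{J}\sum_{j=1}^{n-1}(\vp_{jj}f_J,f_J)_{t\vp},
\]
and since $\sum_{j=1}^{n-1}\vp_{jj}=\Tr_h\ddbar\vp-\vp_{nn}$, its multilinear-algebra lower bound is $t\bigl(-\sum_{j=q+1}^n\mu_j+\vp_{nn}\bigr)\|f\|_{t\vp}^2$. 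Positivity of this coefficient near $\bd\Om$ is exactly hypothesis~(3); away from the boundary one may integrate by parts in all $n$ directions and hypothesis~(2) alone suffices. The boundary integral in \eqref{eqn:basic identity, IBP} contains only $\opL_\rho$, and hypothesis~(1) makes it nonnegative via \eqref{eqn:-opL positivity}.

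Your Hodge-star route is a known alternative (cf.\ Shaw's work on annuli) and should be workable, but two points in the sketch are imprecise. First, $\overline{\star}$ does not intertwine $\dbar$ with the \emph{weighted} adjoint $\dbars_{t\vp}$ as stated: the relation $\vartheta=-\star\dbar\star$ is unweighted, so under $\overline{\star}$ the weight effectively changes sign and one lands in a problem with weight $e^{+t\vp}$ on the dual side (consistent with your observation that $\ddbar(-\vp)$ is the relevant Hessian, but this needs to be said). Second, the $\vp_{nn}$ term that hypothesis~(3) controls is not a boundary contribution: in the direct computation it sits in the \emph{interior} curvature term, arising because near $\bd\Om$ one integrates by parts only in tangential directions; the boundary integrand in \eqref{eqn:basic identity, IBP} involves $\opL_\rho$ alone. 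The paper's integration-by-parts approach makes the role of hypothesis~(3) transparent without tracking how the $\dbar$-Neumann condition and the weight transform under $\overline{\star}$.
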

\subsection{Preliminaries for  \texorpdfstring{$L^2$}{L2} methods} Suppose that $\vp$ is a smooth  function
that is defined on a neighborhood of $\ol{\Om}$.
Let $L^2_{p,q}(\Om,t\vp)$ denote the $L^2$-space of $(p,q)$-forms on $\Om$ with norm
\[
\|f\|_{L^2(\Om,t\vp)}^2 = \int_\Om |f|^2\, e^{-t\vp}\, dV
\]
and corresponding inner product.
Denote the $L^2$-adjoint of $\dbar:L^2_{p,q}(\Om,t\vp) \to L^2_{p,q+1}(\Om,t\vp)$ by
$\dbars_{t\vp}:L^2_{p,q+1}(\Om,t\vp) \to L^2_{p,q}(\Om,t\vp)$.
 It is well known that $\Dom(\dbars_{t\vp})$ does not depend on $t\vp$.

We assume that $\rho$ is a defining function for $\Om$ so that $|d\rho|=1$ on $\bd\Om$. Let $U$ be a open set that intersects $\bar\Om$
and admits a basis of orthonormal $(0,1)$-forms $\om_1,\dots,\om_n$ so that $\om_n = \p\rho$, and
\begin{equation}\label{eqn:C-S setup for d-dbar-rho}
\p\dbar\rho = \overline{\dbar\om_n} = \sum_{j,k=1}^n \rho_{jk}\, \om_j\wedge\omb_k
\end{equation}
where $(\rho_{jk})$ is the Levi matrix.
We denote by $\bar L_1,\dots,\bar L_n$ the basis for $T^{1,0}(U)$ that is dual to $\omb_1,\dots,\omb_n$.  For each $1\leq j\leq n$, set $\delta_j^{t\varphi}=e^{t\varphi}L_j e^{-t\varphi}$, so that the adjoint of $\bar L_j$ with respect to $L^2(\Omega,e^{-t\varphi})$ is given by $-\delta_j^t$ up to a zero-order term that is independent of $t$ and $\varphi$.

Let $\opI_q = \{J=(j_1,\dots,j_q) \in \N^q : 1 \leq j_1 < \cdots < j_q \leq n\}$ denote the set of increasing $q$-tuples. We can then express a $(0,q)$ form $f$ on $U$ by
\[
f = \sum_{J \in \I_q} f_J\, \omb^J.
\]
Given $I \in \I_{q-1}$ and $J\in\I_q$, we define
$\ep^{jI}_K$ to be $0$ if $\{j\}\cup I \neq J$ as sets  and otherwise is the sign of the permutation that reorders $jI$ as $K$. We also employ the standard
notation
\[
f_{jI} = \sum_{J \in \I_q} \ep^{jI}_J f_J
\]
and use the shorthand $\|\bar L f\|_{L^2(\Om,e^{-\vp})}$ for
\[
\|\bar L f\|_{L^2(\Om,e^{-\vp})}^2 =  \sum_{J \in \I_q}\sum_{k=1}^n \|\bar L_k f_J\|_{L^2(\Om,e^{-\vp})}^2.
\]

The key to proving Theorem \ref{thm:L^2 theory, no weight} (and  Theorem \ref{thm:L^2 theory, with weights}) is to establish a good \emph{basic identity}, and this is provided by
equation (5.3.20) in \cite[Section 5.3]{ChSh01}.  For forms $f \in \Dom(\dbars_{t\vp})\cap C^2_{0,q}(\bar\Om)$ supported in $U$ where $U$ is open set that admits good local coordinates,
\begin{align}
\|\dbar f\|_{L^2(\Om,e^{-t\vp})}^2 + \|\dbars_{t\vp} f\|_{L^2(\Om,e^{-t\vp})}^2
&= \|\bar L f\|_{L^2(\Om,e^{-t\vp})}^2 + t\sum_{I \in \I_{q-1}} \sum_{j,k=1}^n (\vp_{jk} f_{jI},f_{kI})_\vp \nn \\
&+ \sum_{I \in \I_{q-1}} \sum_{j,k=1}^n \int_{\bd\Om \cap U} \rho_{jk} f_{jI} \overline{f_{kI}} e^{-t\vp}\, d\sigma
+ R(f) + E(f) \label{eqn:basic identity}
\end{align}
where $E(f)$ satisfies both
\begin{equation}
\label{eq:E_standard_estimate}
|E(f)| \leq C\|\bar L f\|_{L^2(\Om,e^{-t\vp})}\|f\|_{L^2(\Om,e^{-t\vp})}
\end{equation}
and via integration by parts of the tangential terms,
\begin{equation}
\label{eq:E_IBP_estimate}
|E(f)| \leq C\Big(\sum_{J \in\I_q} \|\bar L_n f_J\|_{L^2(\Om,e^{-t\vp})} + \sum_{J\in\I_q}\sum_{j=1}^{n-1} \|\delta_j^{t\vp} f_J\|_{L^2(\Om,e^{-t\vp})}\Big)\|f\|_{L^2(\Om,e^{-t\vp})}
\end{equation}
and for any $\ep>0$, there exists $C_\ep>0$ so that
\begin{equation}\label{eqn:R(f)}
|R(f)| \leq \ep(\|\dbar f\|_{L^2(\Om,e^{-t\vp})}^2 + \|\dbarsvp f\|_{L^2(\Om,e^{-t\vp})}^2) + C_\ep \|f\|_{L^2(\Om,e^{-t\vp})}^2.
\end{equation}
Additionally, the constants $C$ and $C_\ep$ are independent of $t$ and $\vp$. The basic identity \eqref{eqn:basic identity} is precisely the result we need
to prove Theorem \ref{thm:L^2 theory, with weights} and Theorem \ref{thm:L^2 theory, no weight} for the case when $\opL_\rho^h$ is $q$-positive
on $U\cap\bd\Om$.

We now establish a basic identity for the components of $\bd\Om$ for which $-\opL_\rho^h$ is $(n-1-q)$-positive.
Following \cite{ChSh01}, we have
\[
[\delta_k^{t\vp},\bar L_k]u
= \phi_{kk} u +  \sum_{\ell=1}^n c_{kk}^\ell \delta_\ell^{t\vp}(u) - \sum_{\ell=1}^n \bar c_{kk}^\ell \bar L_\ell(u)
\]
where $\rho_{kk} = c_{kk}^n = \bar c_{kk}^n$, and $c^\ell_{kk}$ are $C^1$ functions on $U$. Integration by parts then yields
\begin{align}\label{eqn:bar L_k f_J IBP}
\|\bar L_k f_J\|_{L^2(\Om,e^{-t\vp})}^2
&= \|\delta_k^{t\vp} f_J\|_{L^2(\Om,e^{-t\vp})}^2 - t(\vp_{kk} f_J,f_J)_{t\vp}
- \Big(\sum_{\ell=1}^n c_{kk}^\ell \delta_\ell^{t\vp} f_J, f_J\Big)_{t\vp} \\ &+ O(\|\bar L f\|_{L^2(\Om,e^{-t\vp})}\|f\|_{L^2(\Om,e^{-t\vp})}). \nn
\end{align}
Integration by part shows that if $\ell < n$, then
\[
|(c_{kk}^\ell \delta_\ell^{t\vp} f_J, f_J)_{t\vp}| \leq C \|\bar L f\|_{L^2(\Om,e^{-t\vp})}\|f\|_{L^2(\Om,e^{-t\vp})}.
\]
However, if $\ell=n$, then
\[
(c^n_{kk} \delta_n^{t\vp}f_J,f_J)_{t\vp} = \int_{\bd\Om\cap U} \rho_{jk} |f_J|^2 e^{-t\vp}\, d\sigma
+ O(\|\bar L f\|_{L^2(\Om,e^{-t\vp})}\|f\|_{L^2(\Om,e^{-t\vp})}).
\]
Thus, we have now established \cite[(3.20)]{Sha85a} but for forms $f \in \Dom(\dbar)\cap\Dom(\dbars)$ supported in a suitably small neighborhood in a complex
manifold (vs. in $\C^n$), namely,
\begin{align}
\|\dbar f\|_{L^2(\Om,e^{-t\vp})}^2 + \|\dbars_{t\vp} f\|_{L^2(\Om,e^{-t\vp})}^2
&= \sum_{J \in\I_q} \|\bar L_n f_J\|_{L^2(\Om,e^{-t\vp})}^2 + \sum_{J\in\I_q}\sum_{j=1}^{n-1} \|\delta_j^{t\vp} f_J\|_{L^2(\Om,e^{-t\vp})}^2 \nn \\
&+ t \sum_{I \in\I_{q-1}}\sum_{j,k=1}^n ( \vp_{jk} f_{jI},f_{kI})_{t\vp} - t\sum_{J\in\I_q}\sum_{j=1}^{n-1} (\vp_{jj} f_J,f_J)_{t\vp}\nn  \\
&+ \sum_{I\in\I_{q-1}} \sum_{j,k=1}^n \int_{\bd\Om} \rho_{jk} f_{jI} \overline{f_{kI}} e^{-t\vp}\, d\sigma
- \sum_{J \in\I_q}\sum_{j=1}^{n-1}\int_{\bd\Om} \rho_{jj} |f_J|^2 e^{-t\vp} \, d\sigma   \label{eqn:basic identity, IBP}\\
&+E(f)+R(f).  \nn
\end{align}
Notice that our coordinates are such that
$\displaystyle{
\sum_{j=1}^{n-1} \rho_{jj} = \Tr(\opL^h_\rho).}
$

\subsection{The proof of Theorem \ref{thm:L^2 theory, no weight}}\label{subsec:proof no weight L^2}
We use the basic identities (\ref{eqn:basic identity}) and \eqref{eqn:basic identity, IBP} with $\vp=0$. In the case that $\opL_\rho^h$ is strictly
$q$-positive on $U\cap \bd\Om$,  it follows from standard multilinear algebra (see, e.g., \cite[Lemma 4.7]{Str10}) that there exists $c>0$
so that for $f \in C^2_{p,q}(\bar\Om)\cap\Dom(\dbars)$ supported in $U$,
(\ref{eqn:basic identity}) becomes
\begin{align*}
\|\dbar f\|_{L^2(\Om)}^2 + \|\dbars f\|_{L^2(\Om)}^2
&\geq \|\bar L f\|_{L^2(\Om)}^2 + c \|f\|_{L^2(\bd\Om)}^2
+ R(f) + E(f).
\end{align*}
A small constant/large constant argument and an absorption of terms establishes the Folland-Kohn basic estimate for $f$ supported on $U$
when $\opL_\rho^h$ is strictly $q$-positive on $U\cap\bd\Om$.

Now suppose that $f$ is supported in a neighborhood $U$ so that $-\opL_\rho^h$ is $(n-1-q)$-positive on $\bd\Om\cap U$.
In this case, (\ref{eqn:basic identity, IBP}) with $\vp=0$ becomes
\begin{align}
\|\dbar f\|_{L^2(\Om)}^2 + \|\dbars f\|_{L^2(\Om)}^2
&= \sum_{J \in\I_q} \|\bar L_n f_J\|_{L^2(\Om)}^2 + \sum_{J\in\I_q}\sum_{j=1}^{n-1} \|L_j f_J\|_{L^2(\Om)}^2 \nn \\
&+ \sum_{I\in\I_{q-1}} \sum_{j,k=1}^n \int_{\bd\Om} \rho_{jk} f_{jI} \overline{f_{kI}}\, d\sigma
- \sum_{J \in\I_q}\int_{\bd\Om} \Tr(\opL) |f_J|^2  \, d\sigma   \label{eqn:basic identity, IBP, no weight}\\
&+E(f)+R(f)  \nn
\end{align}
For the boundary integral, we note that
\begin{equation}\label{eqn:-opL positivity}
\lambda_1^h + \cdots + \lambda_q^h - (\lambda_1^h + \cdots \lambda_{n-1}^h)
= - (\lambda_{q+1}^h+\cdots + \lambda_{n-1}^h)>0
\end{equation}
since $-\opL_\rho^h$ is $(n-1-q)$-positive. This means that
a small constant-large constant argument and an absorption of the $E(f)$ and $R(f)$ terms shows that the Folland-Kohn basic estimate holds
in this regime as well.

A partition of unity argument shows that the Folland-Kohn basic estimate holds globally and Theorem \ref{thm:L^2 theory, no weight} is proved.

\subsection{The proof of Theorem \ref{thm:L^2 theory, with weights}}\label{subsec:proof of weighted L^2}
We first consider the case in which the Levi-form of the boundary is $q$-positive and the complex Hessian of $\varphi$ is strictly $q$-positive. We cover $\bar\Om$ with good neighborhoods $\{U_k\}$ and use a partition
of unity subordinate to $\{U_k\}$ so that \eqref{eqn:basic identity} applies on each $U_k$.
As in the proof of Theorem \ref{thm:L^2 theory, no weight}, we may use, e.g., \cite[Lemma 4.7]{Str10} to show that the boundary integral term in \eqref{eqn:basic identity} is non-negative.  We then use a small constant/large constant argument with \eqref{eq:E_standard_estimate} to absorb  $\|\bar L f\|_{L^2(\Om,e^{-t\vp})}$ and take
$\epsilon$ small enough to absorb the $\dbar$ and $\dbars_{t\vp}$ terms in \eqref{eqn:R(f)}. Finally, we use the strict $q$-positivity of the complex Hessian of $\vp$ and take $t$ large enough to absorb the remaining error terms. Consequently,
there exists $C>0$ so that \eqref{eq:basic_estimate} follows.

Suppose, on the other hand, that $-\mathcal{L}_\rho^h$ is $(n-q-1)$-positive and the complex Hessian of $-\varphi$ is strictly $(n-q)$-positive.  Choose a neighborhood $U_k$ on which \eqref{eqn:basic identity, IBP} holds.  Using a small constant/large constant argument with \eqref{eq:E_IBP_estimate} to absorb terms with derivatives in \eqref{eqn:basic identity, IBP} and similar estimates to absorb $R(f)$ gives us
\begin{multline}
\label{eqn:basic identity, IBP, weight}
\|\dbar f\|_{L^2(\Om,e^{-t\vp})}^2 + \|\dbars_{t\vp} f\|_{L^2(\Om,e^{-t\vp})}^2
\geq C t \sum_{I \in\I_{q-1}}\sum_{j,k=1}^n ( \vp_{jk} f_{jI},f_{kI})_{t\vp} - C t\sum_{J\in\I_q}\sum_{j=1}^{n-1} (\vp_{jj} f_J,f_J)_{t\vp}\\
+ C\sum_{I\in\I_{q-1}} \sum_{j,k=1}^n \int_{\bd\Om} \rho_{jk} f_{jI} \overline{f_{kI}} e^{-t\vp}\, d\sigma
-C \sum_{J \in\I_q}\sum_{j=1}^{n-1}\int_{\bd\Om} \rho_{jj} |f_J|^2 e^{-t\vp} \, d\sigma-C\|f\|_{L^2(\Om,e^{-t\vp})}^2,
\end{multline}
where $C>0$ is independent of $f$ and $t$.  As before, \eqref{eqn:-opL positivity} implies that the boundary integral is non-negative.  Denote the eigenvalues of the complex Hessian of $\varphi$ with respect to $h$ in non-decreasing order by $\{\mu_1^h,\ldots,\mu_n^h\}$.  By the usual multilinear algebra,
\[
  \sum_{I \in\I_{q-1}}\sum_{j,k=1}^n ( \vp_{jk} f_{jI},f_{kI})_{t\vp} - \sum_{J\in\I_q}\sum_{j=1}^{n-1} (\vp_{jj} f_J,f_J)_{t\vp}\geq \epsilon\|f\|_{L^2(\Om,e^{-t\vp})}^2
\]
for some $\epsilon>0$ if
\[
  \sum_{j=1}^q\mu_j^h-\sum_{j=1}^n\mu_j^h+\vp_{nn}=-\sum_{j=q+1}^n\mu_j^h+\vp_{nn}
\]
is strictly positive on some interior neighborhood of the boundary.  Our hypotheses guarantee that this holds on $\partial\Omega$, so it must hold on some neighborhood of $\partial\Omega$, and hence we may choose $t$ sufficiently large in \eqref{eqn:basic identity, IBP, weight} to obtain \eqref{eq:basic_estimate} for forms supported on some interior neighborhood of the boundary.

On any neighborhood $U_k$ which does not intersect $\partial\Omega$, we may integrate by parts in $\{L_j\}_{1\leq j\leq n}$ for any orthonormal basis for $T^{1,0}(\partial\Omega)$.  Following the same procedure used to obtain \eqref{eqn:basic identity, IBP, weight}, we obtain
\begin{multline*}
\|\dbar f\|_{L^2(\Om,e^{-t\vp})}^2 + \|\dbars_{t\vp} f\|_{L^2(\Om,e^{-t\vp})}^2
\geq C t \sum_{I \in\I_{q-1}}\sum_{j,k=1}^n ( \vp_{jk} f_{jI},f_{kI})_{t\vp} - C t\sum_{J\in\I_q}\sum_{j=1}^{n} (\vp_{jj} f_J,f_J)_{t\vp}\\
-C\|f\|_{L^2(\Om,e^{-t\vp})}^2,
\end{multline*}
for any form supported in $U_k$.  Here, it suffices to observe that the complex Hessian of $-\varphi$ is $(n-q)$-positive to obtain \eqref{eq:basic_estimate}.  Increasing the size of $t$ as needed to accommodate error terms arising from our partition of unity, we obtain \eqref{eq:basic_estimate} globally.

\section{Constructing a metric for a single Hermitian form}

\subsection{Analyticity of spectral projections}
Let $n$ be a positive integer and  let $\mathscr{H}_n$ denote the real vector space of $n\times n$ Hermitian matrices.  For a matrix $T\in \mathscr{H}_n$, let $\sigma(T)\subset \mathbb{R}$ be its spectrum, i.e., the set of eigenvalues, and   for a subset $G\subset \mathbb{C}$ denote by $\pi_G(T)$ the matrix of the orthogonal projection from $\mathbb{C}^n$
onto the direct sum of  the  eigenspaces of $T$ corresponding to eigenvalues in the set $G$.  We have
\[(\pi_G(T))^2=\pi_G(T), \quad (\pi_G(T))^*=\pi_G(T)\] and  \[ \mathrm{range}\,\left( \pi_G(T)\right)= \bigoplus_{\lambda\in G\cap \sigma(T)}\{x\in \mathbb{C}^n: Tx=\lambda x\}.\]
If $G\cap \sigma(T)=\emptyset$ we let $\pi_G(T)=0$, consistent with the convention that an empty internal direct sum of vector subspaces is the zero subspace. We have the following.
\begin{prop}\label{prop-analytic}
	Let $G\subset \mathbb{C}$ be a smoothly bounded open subset of the complex plane, and let ${\mathscr{W}_G}\subset \mathscr{H}_n$ be the set of $n\times n$ Hermitian matrices which have no eigenvalues on the boundary $\bd G$:
	\[{\mathscr{W}_G}=\left\{T\in \mathscr{H}_n: \sigma(T)\cap \bd G =\emptyset \right\}. \]
	Then the mapping
	\begin{equation}\label{eq-pitu}
		\pi_G:{\mathscr{W}_G}\to \mathrm{Mat}_{n\times n}(\mathbb{C})
	\end{equation}
	is real analytic.
	
\end{prop}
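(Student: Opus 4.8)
The plan is to represent the spectral projection locally as a Riesz contour integral of the resolvent, and then to let the Hermitian argument move into the complexification $\mathrm{Mat}_{n\times n}(\mathbb{C})$ of $\mathscr{H}_n$, where this integral becomes a genuinely holomorphic function; real analyticity of $\pi_G$ then follows because $\mathscr{H}_n$ is a totally real slice of $\mathrm{Mat}_{n\times n}(\mathbb{C})$.

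First I would fix $T_0\in\mathscr{W}_G$ and work near it. Its spectrum $\sigma(T_0)$ is finite and misses $\bd G$, so it splits as $\sigma(T_0)=K^+\sqcup K^-$ with $K^+:=\sigma(T_0)\cap G\subset G$ and $K^-:=\sigma(T_0)\setminus\overline G\subset\mathbb{C}\setminus\overline G$. Choose a small open neighborhood $W$ of $K^+$ with smooth boundary and $\overline W\subset G$, and let $\gamma=\bd W$ be its positively oriented boundary; then $\gamma$ is a compact smooth $1$-cycle in $G$ with winding number $1$ about each point of $K^+$ and $0$ about each point of $K^-$. Since the roots of the characteristic polynomial depend continuously on the matrix entries, there is a neighborhood $V\subset\mathscr{H}_n$ of $T_0$ (automatically contained in $\mathscr{W}_G$) so small that for every $T\in V$ one has $\sigma(T)\cap\gamma=\emptyset$, the eigenvalues of $T$ near $K^+$ still lie in $G$ and inside $\gamma$, and those near $K^-$ still lie outside $\overline G$ and outside $\gamma$. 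For such $T$ the eigenvalues enclosed by $\gamma$ are exactly those in $G$, so the Riesz projection formula yields
\[
\pi_G(T)=\frac{1}{2\pi i}\oint_\gamma (zI-T)^{-1}\,dz,\qquad T\in V.
\]

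Next I would upgrade this identity to real analyticity. By Cramer's rule, $(zI-\zeta)^{-1}=\det(zI-\zeta)^{-1}\,\operatorname{adj}(zI-\zeta)$ for any $\zeta\in\mathrm{Mat}_{n\times n}(\mathbb{C})$ with $\det(zI-\zeta)\neq 0$, and the entries of $\operatorname{adj}(zI-\zeta)$ and of $\det(zI-\zeta)$ are polynomials in $z$ and in the entries of $\zeta$. The set $\{(z,\zeta)\in\gamma\times\mathrm{Mat}_{n\times n}(\mathbb{C}):\det(zI-\zeta)=0\}$ is closed and disjoint from the compact set $\gamma\times\{T_0\}$, hence disjoint from $\gamma\times B(T_0,\delta)$ for some $\delta>0$. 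Therefore $(z,\zeta)\mapsto (zI-\zeta)^{-1}$ is continuous on $\gamma\times B(T_0,\delta)$ and, for each fixed $z\in\gamma$, holomorphic in $\zeta$ on the ball $B(T_0,\delta)\subset\mathrm{Mat}_{n\times n}(\mathbb{C})$. Integrating this holomorphic family over the compact cycle $\gamma$ (via Morera's theorem together with Fubini, or by differentiating under the integral sign) produces a holomorphic map $\widetilde\pi$ on $B(T_0,\delta)$. After shrinking $V$ so that $V\subset B(T_0,\delta)$ we have $\widetilde\pi|_{V}=\pi_G|_V$; since $\mathscr{H}_n$ embeds in $\mathrm{Mat}_{n\times n}(\mathbb{C})$ as a totally real $\mathbb{R}$-linear subspace of real dimension $n^2$, a map on an open subset of $\mathscr{H}_n$ that extends holomorphically to a neighborhood in $\mathrm{Mat}_{n\times n}(\mathbb{C})$ is real analytic. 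Hence $\pi_G$ is real analytic near $T_0$, and as $T_0\in\mathscr{W}_G$ was arbitrary, it is real analytic on $\mathscr{W}_G$.

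The one genuinely delicate point I anticipate is the uniform choice of the contour $\gamma$: one must combine continuity of the spectrum with the separation of $K^+$ from $K^-$ and from $\bd G$ in order to guarantee that a single fixed $\gamma$ computes $\pi_G(T)$ for all $T$ in a fixed neighborhood of $T_0$. Everything else — Cramer's rule, holomorphy of a contour integral of a holomorphic family over a compact cycle, and the totally real slice characterization of real analyticity — is routine. (This is, in essence, the classical analytic perturbation theory of spectral projections, but the argument above is self-contained.)
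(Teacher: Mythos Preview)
Your proof is correct and follows essentially the same approach as the paper: both use the Riesz contour-integral representation of the spectral projection and then observe that the integrand, via Cramer's rule, extends holomorphically in the matrix variable to a complex neighborhood, whence real analyticity on the totally real slice $\mathscr{H}_n$. The only minor difference is that the paper integrates over the fixed global contour $\partial G$ (obtaining a single formula valid on all of $\mathscr{W}_G$), whereas you choose a compact contour $\gamma$ adapted to each $T_0$; your local choice is slightly more robust (it would work even for unbounded $G$) but otherwise the arguments are interchangeable.
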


\begin{proof}
	This is a consequence of the following classical representation of the matrix $\pi_G(T)$ as a contour integral (see \cite{kato}):
	\begin{equation}
		\label{eq-resolvent}
		\pi_G(T)= \frac{1}{2\pi i} \int_{\bd G} (\zeta I-T)^{-1}d\zeta,
	\end{equation}
	where
	\begin{enumerate}[wide, label=(\alph*)]
		\item $\zeta\mapsto (\zeta I-T)^{-1}\in \mathrm{Mat}_{n\times n}(\mathbb{C})$ is a matrix valued holomorphic function defined in a neighborhood of $\bd G$. What we mean by this is that each entry of the output matrix is holomorphic in $\zeta$, something that is
		obvious from Cramer's rule and the fact that for each $\zeta\in \bd G$, the matrix $(\zeta I-T)$ is invertible 
		(by the hypothesis that $T\in {\mathscr{W}_G}$).
		
		\item The integral is a line integral, taken entry-wise. The contour $\bd G$ is given the standard orientation coming from the orientation of $G$ as an open subset of $\mathbb{C}$.
		
	\end{enumerate}
	It then follows that the mapping \eqref{eq-pitu} is real analytic. Indeed, if we allow $T$ to have complex entries, differentiation under the integral sign in formula \eqref{eq-resolvent} shows that the mapping \eqref{eq-pitu} extends to a holomorphic
	mapping of matrices.

	To complete the proof, we need to establish the formula \eqref{eq-resolvent}. Fix $T\in {\mathscr{W}_G}\subset \mathscr{H}_n$, and note that $(\zeta I-T)$ is an invertible
	matrix provided $\zeta\not \in \sigma(T)$. This means that if $G$ does not contain any eigenvalue of $T$, then the function $\zeta\mapsto (\zeta I-T)^{-1}$ is holomorphic in a neighborhood of $\overline{G}$, i.e., each entry of this matrix-valued function is holomorphic in $\zeta$.  Therefore by Cauchy's theorem, in this case we have $\pi_G(T)=0$, as needed.
	
	Now consider the situation where $\sigma(T)=\{\lambda_1,\dots, \lambda_n\}$ and for some $1\leq k \leq n$ we have $\lambda_1, \dots \lambda_k\in G$ and the rest of the eigenvalues $\lambda_{k+1},\dots, \lambda_n\in \mathbb{C}\setminus \overline{G}$. Let $\{v_1,\dots, v_n\}$ be an orthonormal set of eigenvectors corresponding to
	these eigenvalues, i.e.,  $Tv_j=\lambda_j v_j, \norm{v_j}=1$ for each $j$ and $\langle v_j, v_k\rangle =\delta_{jk}$ for $1\leq j, k \leq  n$.
	Let $U$ be the unitary automorphism of
	$\mathbb{C}^n$ defined by $Ue_j=v_j$, where $\{e_j, 1\leq j \leq n\}$ is the standard basis of $\mathbb{C}^n$. Then it is clear that to prove the result, it suffices to prove that if $P$ is the matrix defined by the integral on the right hand side
	of \eqref{eq-resolvent} then $U^{-1}P U$ coincides with the matrix of the orthogonal projection onto the subspace spanned by the vectors $\{e_1,\dots, e_k\}$, i.e.\violet{,}
	\[ U^{-1}P U = \begin{pmatrix}
		I_{k\times k} &0\\0 & 0_{n-k\times n-k}
	\end{pmatrix}.\]
	Observe that
	\[U^{-1}(\zeta I- T)^{-1}U= \left[U^{-1} (\zeta I-T)U \right]^{-1}=(\zeta I- U^{-1}T U)^{-1}= \mathrm{diag}\left( \frac{1}{\zeta-\lambda_1}, \dots, \frac{1}{\zeta-\lambda_n}\right),\]
	where we use the fact that $ U^{-1}TU= \mathrm{diag}(\lambda_1,\dots, \lambda_n).$
	By the linearity of the integral we have
	\begin{align*}
		U^{-1}P U &= \frac{1}{2\pi i} \int_{\bd G} U^{-1}(\zeta I-T)^{-1}Ud\zeta\\
		&=  \frac{1}{2\pi i} \int_{\bd G} \mathrm{diag}\left( \frac{1}{\zeta-\lambda_1}, \dots, \frac{1}{\zeta-\lambda_n}\right)d\zeta\\
		&= \mathrm{diag}\left( \frac{1}{2\pi i} \int_{\bd G} \frac{d\zeta}{\zeta-\lambda_1}, \dots,  \frac{1}{2\pi i} \int_{\bd G} \frac{d\zeta}{\zeta-\lambda_n}  \right)\\&=  \begin{pmatrix}
			I_{k\times k} &0\\0 & 0_{n-k\times n-k}
		\end{pmatrix},
	\end{align*}
	since $\lambda_1,\dots, \lambda_k\in G$ and $\lambda_{k+1}, \dots \lambda_n\in \mathbb{C}\setminus \overline{G}$.
\end{proof}
\subsection{Proof of Proposition~\ref{prop-main}}  Let $h$ be a Hermitian metric on $E$,  and for $1\leq j \leq d$, let $ \lambda_j^h(p)$
		denote the $j$-th smallest eigenvalue of the  Hermitian form $S_p$ with respect to the metric $h$ (counting with multiplicity)
		so that
		\[ \lambda_1^h(p)\leq \lambda_2^h(p) \leq \dots \leq \lambda_d^h(p). \]
Since the $j$-th smallest eigenvalue of a matrix depends continuously on its entries (see \cite{kato}) it follows that
	the real-valued function $p\mapsto  \lambda_j^h(p)$ is continuous
	on $M$ for each $j$. Denote by $\nu_+(p)$ (resp., $\nu_-(p)$)
	 the number of positive (resp., negative)  eigenvalues of
	$S_p$, which is well-known not to depend on the metric $h$.  By hypothesis $\nu_+(p)\geq d-\qt+1$,
	so it follows that $\nu_-(p)\leq \qt-1$ and  therefore for any metric $h$ we have
	\[ \lambda_q^h(p)>0, \quad p \in M.\]
	
	For $0\leq r \leq \qt-1$ introduce the subsets
	\[V_r=\{p \in M: \nu_-(p)\leq r\}\cup F,\]
	so that $V_{\qt-1}=M$, and each $V_r$ is closed in $M$, since with respect to any metric  $h$  on $E$, we have $V_r=\{\lambda_{r+1}^h\geq 0\}\cup F$.
	We also let
	\[U_r=V_r \setminus V_{r-1}=\{p\in M: \nu_-(p)=r\}\]
	so that $V_r =\bigcup\limits_{j=0}^r U_j$. We will construct the metric whose existence is claimed 	
	using an inductive process.
	Starting with {a Hermitian metric $g_{r-1}$} on $E$
	in the $r$-th stage (where  {$1\leq r \leq \qt-1$}), we modify ${g}_{r-1}$ so that we obtain a metric $g_r$ for which  the form $S$ is strictly $\qt$-positive with respect to $g_r$ on the set $V_r$.

	To start the inductive process (i.e., $r=0$), we take $g_0$ to be the metric given in the hypotheses on the bundle $E$. Then on $V_0$,  since either all the eigenvalues are nonnegative and $\lambda_{\qt}^{g_0}>0$, or we are in the
	set $F$ where $S$ is strictly $\qt$-positive,
	it follows that the sum of the smallest $\qt$ Levi eigenvalues of $S$ with respect to $g_0$ is strictly positive, and therefore  $S$ is strictly $\qt$-positive with respect to $g_0$ on $V_0$.
	
	We now come to the $r$-th stage, where $1\leq r\leq  {\qt-1}$.  Therefore, there is already a metric $g_{r-1}$ on $E$ for which $S$ is strictly $\qt$-positive on $V_{r-1}$,   and on $U_r$, the eigenvalues with respect to $g_{r-1}$ satisfy
	$\lambda^{g_{r-1}}_r<0, \lambda^{{g}_{r-1}}_{r+1}\geq 0, \lambda^{{g}_{r-1}}_{\qt}>0 $.
	We claim that there is a smooth real-valued function $f\in \mathcal{C}^\infty(V_r)$ such that $f\geq 0$, $f$ vanishes in a neighborhood of $V_{r-1}$,
	and we have
	\begin{equation}
		\label{eq-f}{\displaystyle{\sum_{j=1}^{r} \lambda^{{g}_{r-1}}_j}} + (1+f)\cdot\left(\sum_{j=r+1}^{\qt} \lambda^{{g}_{r-1}}_j\right)>0
	\end{equation}
	at each point of $V_r$.
	
	By the induction hypothesis, on $V_{r-1}$,  we have  $\sum_{j=1}^{\qt} \lambda^{{g}_{r-1}}_j>0$, and therefore by continuity there is a neighborhood $\widetilde{V}_{r-1}$ of
	$V_{r-1}$ on $V_r$ where we continue to have $\sum_{j=1}^{\qt} \lambda^{{g}_{r-1}}_j>0$. Now consider the continuous function on $U_r=V_r \setminus V_{r-1}$ given by
	\[ \varphi =-\frac{\sum_{j=1}^{\qt} \lambda^{{g}_{r-1}}_j}{\sum_{j=r+1}^{\qt} \lambda^{{g}_{r-1}}_j}.\]
	The function $\varphi$ is well-defined since each of the summands in the denominator is nonnegative and $\lambda^{{g}_{r-1}}_{\qt}>0$ at each point of $M$. It follows further that $\varphi<0$ on $\widetilde{V}_{r-1}$, since both the numerator and denominator are strictly positive there. It therefore follows that there is a $\mathcal{C}^\infty$ function $f$ on $V_r$ such that $f\equiv 0$ in a neighborhood of $V_{r-1}$ and $f> \varphi$ on $U_r$. Such an $f$ clearly satisfies \eqref{eq-f}.
	
	Recall that $S^{g_{r-1}}_p$ denotes the Hermitian operator on $E_p$ which is Hermitian with respect to $g_{r-1}$ and satisfies $g_{r_1}(S^{g_{r-1}}_pu,v)=
	S_p(u,v)$ for $u,v\in E_p$.		For $p\in U_r$,  let $P_p^r$ denote the projection from the fiber  $E_p$  of $E$ onto the direct sum of the eigenspaces of $S^{g_{r-1}}_p$  of the negative eigenvalues $\lambda_1^{g_{r-1}}(p), \dots \lambda^{g_{r-1}}_r(p)$, where $P_p^r$ is orthogonal with respect
	to the metric $g_{r-1}$. We claim that $p\mapsto P_p^r$ is smooth on $U_r$.  More precisely, this map is a smooth section of the bundle $\mathrm{Hom}_{\mathbb{C}}(E,E)$. For any compact subset $K\subset U_r$, there are \emph{negative} numbers
	$\alpha, \beta$ such that $\alpha <\lambda_1^{g_{r-1}}(p)$ and $\lambda^{g_{r-1}}_r(p)< \beta$ for each $p\in K$. Let $G\subset \mathbb{C}$ denote the open disc with center at the point $\frac{1}{2}(\alpha+\beta)$ on the negative real axis and passing through the points $\alpha$ and $\beta$.  Then if $p\in K$, the negative eigenvalues
	$\lambda_1^{g_{r-1}}, \dots, \lambda_r^{g_{r-1}}$ of  $S_p$ with respect to $g_{r-1}$  lie in the open set $G$ and the other eigenvalues lie in $\mathbb{C}\setminus\overline{G}$, so  $S^{g_{r-1}}_p\in {\mathscr{W}_G}$, where
	$  {\mathscr{W}_G}$  is as in the statement of Proposition~\ref{prop-analytic}. Continuing to use the notation of Proposition~\ref{prop-analytic}, we have the representation
	\[ P_p^r= \pi_G(S^{g_{r-1}}_p)  \quad \text{ for } p\in K.\]
	From the smoothness of the map $p\mapsto S^{g_{r-1}}_p$ and the analyticity of the map $T\mapsto \pi_G(T)$, it follows that $p\mapsto P^r_p$ is smooth on $K$ and therefore on all of $U_r$.
	
	Now define a Hermitian  metric $g_r$ on $E|_{V_r}$ by setting for $p\in V_r$ and $X,Y\in E_p$

	\begin{equation}
		\label{eq-metricdef}
		g_r(X,Y) = \begin{cases}g_{r-1}(X,Y)+ f(p)\cdot g_{r-1}(P^r_pX,{P^r_p}Y)  & \text{if   } p\in U_r\\ g_{r-1}(X,Y) & \text{elsewhere on $V_r$}
		\end{cases}
	\end{equation}
	where $f\in \mathcal{C}^\infty(U_r)$ is as in \eqref{eq-f}. We can extend the smooth metric $g_r$ from $E|_{V_r}$ from the closed set $V_r$ to all of $M$ to obtain a Hermitian metric on $E$, which we continue to denote by $g_r$.
	
	To complete the induction, we will show that the sum of the smallest $\qt$ eigenvalues of $S^{g_r}$ is strictly positive on $V_r$.  Since on $V_{r-1}$ we have $g_{r}=g_{r-1}$ we only need to show this on $U_r$.
	Let $p\in U_r$. We have by the definitions of the operators $S^{g_{r-1}}$ and $S^{g_{r}}$ that
	\[ S_p(X,Y) =g_{r-1}(S^{g_{r-1}}_p(X), Y)=  g_r(S^{g_{r}}_p(X), Y).\]
Let
	$\{L_j(p)\}_{j=1}^{d}$ be a $g_{r-1}$-orthonormal set of  eigenvectors of the operator $S^{g_{r-1}}_p$ acting on $E_p$, corresponding to the eigenvalues  $\{\lambda_j^{g_{r-1}}(p)\}$, i.e., $S^{g_{r-1}}_pL_j(p)= \lambda_j^{g_{r-1}}(p)L_j(p)$. In general one cannot choose the sections $p\mapsto L_j(p)$  of $E$ over $U_r$ to be even continuous. Now the definition of the new metric $g_r$ in \eqref{eq-metricdef} shows that we have
	\[ g_r(L_j(p), L_k(p))= \begin{cases}
		(1+f(p))\delta_{jk} & \text{if both } j,k \leq r\\
		\delta_{jk} & \text{ if not}.
	\end{cases}\]
	Therefore for $p\in U_r$, the vectors
	\[ Z_j(p)=\begin{cases}
		\dfrac{1}{\sqrt{1+f(p)}}\cdot L_j(p) & \text{ if } 1\leq j \leq r \\
		L_j(p) & \text{ if } r+1 \leq j \leq d
	\end{cases}\]
	form an orthonormal basis of $E_p$ with respect to the new Hermitian metric $g_r$.  Now notice that
	\[g_r(S^{g_r}_p(Z_j(p)), Z_k(p))= \begin{cases}0 &\text{if  } j\not =k\\
		\dfrac{\lambda_j^{g_{r-1}}(p)}{1+f(p)} &\text{ if } j=k, \text{ and } 1\leq j \leq r\\
		\lambda_j^{g_{r-1}}(p)& \text{ if } j=k, \text{ and } r+1\leq j \leq d
	\end{cases}\]
	so that the basis $\{Z_j(p)\}$ diagonalizes $S^{g_r}_p$, and consequently, its eigenvalues with respect to the new metric $g_r$ at the point $p$ are (in ascending order)
	\[\dfrac{\lambda_1^{g_{r-1}}(p)}{1+f(p)}, \dots,\dfrac{\lambda_r^{g_{r-1}}(p)}{1+f(p)},\lambda_{r+1}^{g_{r-1}}(p), \dots, \lambda_{d}^{g_{r-1}}(p), \]
	the first $r$ of which are negative and the remaining are nonnegative, and the $\qt$-th one is known to be strictly positive. By the choice of the function $f$ in \eqref{eq-f}, it follows that the sum of the first $\qt$ of these
	eigenvalues is strictly positive, and this completes the proof.



\section{Proof of Theorem~\ref{thm-zq}}

  By hypothesis, $\partial\Omega$ satisfies condition $Z(q)$, so at each point of $\partial \Omega$ one of two mutually exclusive conditions hold: either that (i)  the Levi form has at least $(n-q)$-positive eigenvalues, or that (ii) the Levi form has at least $(q+1)$-negative eigenvalues. Noticing that the Levi form $\opL_\rho$ taken with respect to a smooth defining function $\rho$ is continuous on $\partial\Omega$, we see  each of the
conditions (i) and (ii) holds on an open set, so it follows that on each point of a connected component of $\partial\Omega$, exactly one of the conditions (i) or (ii) holds.

Fix a smooth defining function $\rho$ for $\Omega$.
For a boundary component $M_0$   where $\opL_\rho$ has at least $(n-q)$  positive eigenvalues, we apply Proposition \ref{prop-main} with
\[ d=n-1, \quad E=T^{1,0}M_0,\quad  S=\opL,\quad \qt=q, \quad F= \text{the empty set}. \]
We obtain a metric on this boundary component, with respect to which the Levi form is strictly $q$-positive.
For a boundary component $M_0$
where $\opL_\rho$ has at least $(q+1)$-negative eigenvalues,  $-\opL_\rho$ has at least $(q+1)$  positive eigenvalues.  We apply
Proposition \ref{prop-main} with
\[ d=n-1,\quad E=T^{1,0}M_0, \quad S=-\opL_\rho,\quad \qt=n-q-1, \quad F=  \text{the empty set}.  \]
Therefore, we obtain a metric on this boundary component with respect to which $-\opL_\rho$ is $(n-1-q)$-positive. Putting these together, we obtain a Hermitian
metric on $T^{1,0}(\bd\Om)$ and then extend it arbitrarily to the whole manifold $M$.
Theorem \ref{thm:L^2 theory, no weight} applies
and the proof of Theorem \ref{thm-zq} is complete.

\section{Constructing a metric for multiple Hermitian forms}
\subsection{A Result from Linear Algebra}

\begin{prop} Let $E$ be a finite dimensional inner product space, let $V$ and $W$ be subspaces of $E$, and let $\pi_V$ denote the orthogonal projection from $E$ to $V$. Then for each orthonormal basis $\{t_j\}$ of the space $W$ we have
	\[ \sum_{k=1}^{\dim W} \norm{\pi_V t_k}^2= \dim(V\cap W).\]
	\end{prop}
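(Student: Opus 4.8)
The plan is to reduce the statement to the trace of a product of orthogonal projections. Concretely, let $\pi_V$ and $\pi_W$ denote the orthogonal projections of $E$ onto $V$ and $W$ respectively, and note that $\sum_{k=1}^{\dim W}\norm{\pi_V t_k}^2 = \sum_k \langle \pi_V t_k, \pi_V t_k\rangle = \sum_k\langle \pi_V^*\pi_V t_k, t_k\rangle = \sum_k\langle \pi_V t_k, t_k\rangle$, since $\pi_V$ is self-adjoint and idempotent. Because $\{t_k\}$ is an orthonormal basis of $W$, this last sum is exactly $\tr(\pi_W \pi_V \pi_W)$ — or, extending $\{t_k\}$ to an orthonormal basis of $E$ and observing the added basis vectors contribute $0$ after composing with $\pi_W$, it equals $\tr(\pi_V\pi_W)$. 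So the claim is equivalent to $\tr(\pi_V\pi_W) = \dim(V\cap W)$.

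The key step is then to diagonalize the self-adjoint operator $\pi_W\pi_V\pi_W$ (equivalently, to understand the eigenstructure of the pair of projections). I would argue as follows. Set $A = \pi_W\pi_V\pi_W$; this is a self-adjoint operator supported on $W$ with $0\le A\le \mathrm{Id}_W$, so its eigenvalues lie in $[0,1]$. A vector $x\in W$ satisfies $Ax = x$ iff $\langle \pi_V\pi_W x, \pi_W x\rangle = \norm{\pi_V x}^2 = \norm{x}^2$, which forces $\pi_V x = x$, i.e. $x\in V$; conversely any $x\in V\cap W$ is fixed by $A$. Hence the $1$-eigenspace of $A$ is precisely $V\cap W$, contributing $\dim(V\cap W)$ to the trace. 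The remaining point is to show the other eigenvalues contribute nothing, i.e. $A$ has no further integer eigenvalues whose count would inflate the trace — but of course eigenvalues strictly between $0$ and $1$ do contribute to the trace, so this naive count is not enough.

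To handle that, the cleanest route is the well-known pairing of eigenvalues of $\pi_V\pi_W$: restricting attention to $(V\cap W)^\perp$ inside each space, the nonzero eigenvalues of $\pi_W\pi_V\pi_W$ and $\pi_V\pi_W\pi_V$ coincide with multiplicity, and moreover come in a symmetric pattern — but rather than invoke that machinery, I would instead just compute $\tr(\pi_V\pi_W)$ directly via the orthogonal decomposition $E = (V\cap W) \oplus (V\cap W)^\perp$ and the fact that $\pi_V$ and $\pi_W$ both preserve $V\cap W$ and (since they are self-adjoint) its orthogonal complement. On $V\cap W$ the operator $\pi_V\pi_W$ acts as the identity, giving $\dim(V\cap W)$; on $(V\cap W)^\perp$ we must show $\tr(\pi_{V'}\pi_{W'}) = 0$ where $V' = V\cap (V\cap W)^\perp$, $W' = W\cap (V\cap W)^\perp$, which now have trivial intersection. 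So the whole problem reduces to the following: \textbf{if $V'\cap W' = \{0\}$ then $\tr(\pi_{V'}\pi_{W'}) = 0$.}

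This final reduction is where the real content sits, and I expect it to be the main obstacle — it is in fact \emph{false} as stated unless one is careful (take two distinct lines in the plane: $\tr(\pi_{V'}\pi_{W'}) = \cos^2\theta \ne 0$). So the naive trace-equals-dimension statement must be wrong, which tells me I have mis-stated the reduction and the correct invariant must be subtler. Stepping back, I would therefore \emph{not} pursue the trace identity but instead prove the proposition by a direct basis argument: choose an orthonormal basis of $W$ adapted to the operator $A = \pi_W\pi_V\pi_W$, say eigenvectors $t_k$ with eigenvalues $a_k = \norm{\pi_V t_k}^2\in[0,1]$; then $\sum_k \norm{\pi_V t_k}^2 = \sum_k a_k = \tr(A)$, and the assertion $\tr(A) = \dim(V\cap W)$ would need $a_k\in\{0,1\}$ for all $k$ — which is exactly the CS-decomposition phenomenon and is generally false. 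Hence I conclude the statement as written is only plausible under an extra hypothesis (e.g. that one of $V$, $W$ is $\pi_{V}$- or $\pi_W$-invariant, or that the projections commute), and in the actual paper this Proposition is surely applied in a setting — such as $W$ being a coordinate subspace and $V$ spanned by a subset of the same orthonormal frame — where $\pi_V$ and $\pi_W$ commute and the eigenvalues are automatically $0$ or $1$. Under that commuting hypothesis the proof is immediate: $\pi_V\pi_W = \pi_{V\cap W}$, whose trace is $\dim(V\cap W)$, and $\sum_k\norm{\pi_V t_k}^2 = \tr(\pi_W\pi_V\pi_W) = \tr(\pi_V\pi_W) = \dim(V\cap W)$. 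I would flag to the authors that the hypothesis on $V$ and $W$ needs strengthening (or the context clarified) for the proposition to hold.
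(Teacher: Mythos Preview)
You are correct: the Proposition as stated is false, and your two-lines-in-a-plane counterexample is valid. The quantity $\sum_k \norm{\pi_V t_k}^2$ equals $\tr(\pi_W\pi_V\pi_W)$, which is basis-independent, and for non-commuting projections this trace generically exceeds $\dim(V\cap W)$.

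The paper's own proof contains exactly the gap you suspect. It defines the operator $P$ on $W$ by $\ipr{Pz,w} = \ipr{\pi_V z, w}$ for $z,w\in W$ (so $P = \pi_W\pi_V|_W$, the same operator you call $A$) and then asserts that $P$ is the orthogonal projection from $W$ onto $V\cap W$. The verification given is only that $P$ is the identity on $V\cap W$ and zero on $V^\perp\cap W$; but these two subspaces need not span $W$, so the claim does not follow, and indeed fails in your counterexample.

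However, your proposed repair---adding a hypothesis that $\pi_V$ and $\pi_W$ commute---is stronger than necessary. What is actually true, and what both your argument and the paper's argument do establish, is the inequality
\[
\sum_{k=1}^{\dim W}\norm{\pi_V t_k}^2 \geq \dim(V\cap W).
\]
Since $0 \leq P \leq \mathrm{Id}_W$ and the $1$-eigenspace of $P$ contains $V\cap W$, one has $\tr(P)\geq\dim(V\cap W)$. This inequality is precisely what the subsequent Corollary~\ref{cor-lowerbound} uses (it deduces $\sum_k\norm{\pi_V t_k}^2 \geq 1$ from $\dim(V\cap W)\geq 1$), and that Corollary is the only place the Proposition is applied. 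So the correct fix is simply to replace ``$=$'' by ``$\geq$'' in the statement of the Proposition; no additional hypotheses on $V$ and $W$ are needed, and the rest of the paper is unaffected.
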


	\begin{proof}	
	 Consider the Hermitian operator $P$ on the inner product space $W$
	  (with the induced inner product)
	  characterized by the condition that
		\[ \ipr{Pz, w} = \ipr{\pi_V z, w	}, \quad z,w\in W. \]
			 We claim that $P$ coincides with the orthogonal projection from $W$ onto $V\cap W$, where  as before the inner product in the subspace $W$ is that induced from $E$.
	To see this, if $z\in V\cap W$, then for each $w\in W$ we have $\ipr{Pz, w}= \ipr{z,w}$, so $Pz=z$. On the other hand if $\zeta\in V^\perp \cap W$ then we have
	$\ipr{P\zeta, w}=\ipr{\pi_V \zeta, w}= 0$ for each $w\in W$, so $P\zeta=0$,  establishing the claim.

	We now compute the trace of $P$ (as an operator on the inner product space $W$) in two ways. Since $P$ is a projection from $W$ onto $V\cap W$, in an appropriate basis of $W$ the matrix of $P$ is diagonal, with $\dim(V\cap W)$ diagonal entries each equal to 1 and the rest equal to zero.	Therefore $ \tr(P)=\dim(V\cap W).$
	On the other hand with respect to the orthonormal basis $\{t_1,t_2,\dots, t_q\}$ of the space $W$, the matrix of the operator $P$ is $(a_{jk})_{j,k=1}^q$ where
 \[a_{jk}= \ipr{Pt_j, t_k}= \ipr{\pi_V t_j, t_k}= \ipr{\pi_V^2 t_j, t_k } = \ipr{\pi_V t_j, \pi_V t_k},\]
 where we have used the fact that $\pi_V$ is idempotent and self-adjoint.
 Therefore
 \begin{equation}\label{eq-trace2}
 	\tr(P)= \sum_{k=1}^q a_{kk}=  \sum_{k=1}^q \ipr{\pi_V t_k, \pi_V t_k}= \sum_{k=1}^q\norm{ {\pi_V t_k}}^2.
 \end{equation}
The result follows.

\end{proof}

\begin{cor}
	\label{cor-lowerbound}
Let $E$ be an $n$-dimensional inner-product space, let $1\leq q \leq n$ and let $V$ be an $n-q+1$ dimensional subspace of $E$. Denote by $\pi_V$ the orthogonal projection from $E$ onto $V$, and let $t_1,\dots, t_q$ be a collection of orthonormal vectors in $E$.  Then we have
\[ \sum_{k=1}^q \norm{\pi_V t_k}^2 \geq 1.\]
\end{cor}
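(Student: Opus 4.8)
The plan is to reduce the statement to the Proposition just proved, together with an elementary dimension count. First I would set $W = \Span\{t_1,\dots,t_q\}$. Since $t_1,\dots,t_q$ are orthonormal they are in particular linearly independent, so $\dim W = q$ and $\{t_1,\dots,t_q\}$ is an orthonormal basis of $W$ with respect to the inner product induced from $E$.

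Applying the Proposition to the subspaces $V$ and $W$ of $E$ and to this orthonormal basis of $W$ gives
\[ \sum_{k=1}^q \norm{\pi_V t_k}^2 = \dim(V\cap W). \]
So it suffices to show $\dim(V\cap W)\geq 1$, i.e.\ that $V$ and $W$ intersect nontrivially. This is immediate from the Grassmann dimension formula: since $V+W$ is a subspace of the $n$-dimensional space $E$ we have $\dim(V+W)\leq n$, hence
\[ \dim(V\cap W) = \dim V + \dim W - \dim(V+W) \geq (n-q+1) + q - n = 1. \]
Combining the two displays yields $\sum_{k=1}^q \norm{\pi_V t_k}^2 \geq 1$.

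There is essentially no obstacle here: all of the analytic content (relating the sum of squared projection lengths to an intersection dimension) has already been carried out in the Proposition, and the Corollary is just the observation that two subspaces whose dimensions sum to strictly more than $\dim E$ must meet in more than the zero vector. The only thing to keep careful track of is the bookkeeping of dimensions, $\dim V = n-q+1$ and $\dim W = q$.
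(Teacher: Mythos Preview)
Your proof is correct and follows essentially the same approach as the paper: set $W=\Span\{t_1,\dots,t_q\}$, apply the preceding Proposition to obtain $\sum_{k=1}^q\norm{\pi_V t_k}^2=\dim(V\cap W)$, and then use the dimension count $\dim V+\dim W=n+1>n$ to conclude $\dim(V\cap W)\geq 1$. The paper's version is slightly terser about the dimension argument, but the content is identical.
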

\begin{proof} In the preceding proposition, take $W$ to be the span of the vectors $t_1, \dots, t_q$, so that  $\dim W=q$. 	 Notice now that since $\dim V=n-q+1,\dim W=q$ and $\dim E=n$, it follows that
	$\dim (V\cap W)\geq 1$. The result follows from the proposition.
\end{proof}
\subsection{The construction for a continuous subbundle}

We begin with the result that we will need in the proof of Theorem \ref{thm:continuously varying}, in the presence of a continuous subbundle.
\begin{thm} \label{thm-keyconstruction}
	Let $S$ be a smooth compact manifold, $E\to S$ a smooth complex vector bundle of rank $d$, and $V\to S$ a continuous subbundle of $E$ of rank $d-q+1$, where $1\leq q\leq d$.  Let $\{Q_j\}_{j=1}^N$ be a finite collection of continuous Hermitian forms on the bundle $E$ such that the restriction of each $Q_j$ to the subbundle $V$ is positive definite. Then there is a smooth Hermitian metric $g$ on $E$ such that for each $1\leq j \leq N$,  the form  $Q_j$  is strictly $q$-positive with respect to $g$.
\end{thm}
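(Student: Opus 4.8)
The plan is to first reduce to constructing a merely \emph{continuous} metric with the required property, and then to build such a metric by stretching $E$ in the directions transverse to the subbundle $V$. For the reduction, note that strict $q$-positivity of a Hermitian form with respect to a metric is an open condition in the $C^0$-topology on metrics: by Theorem~\ref{thm-schur} (equivalently by \eqref{eq-qpositive}), the function $F(p,g)=\min_{1\le j\le N}\sum_{k=1}^{q}\lambda^{g}_{k}(Q_j)(p)$ is continuous in $(p,g)$, so if some continuous metric $g_0$ on $E$ makes every $Q_j$ strictly $q$-positive, then $\min_{p\in S}F(p,g_0)>0$ by compactness of $S$, and every metric $C^0$-close to $g_0$ retains the property. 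Since Hermitian metrics form an open convex subset of the continuous Hermitian forms on $E$, a standard partition-of-unity mollification produces smooth metrics arbitrarily $C^0$-close to $g_0$. Hence it suffices to exhibit a continuous metric for which all the $Q_j$ are strictly $q$-positive.

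\textbf{The stretched metric.} Fix any continuous Hermitian metric $h$ on $E$ and let $\pi\colon E\to V$ be the $h$-orthogonal projection onto the (continuous) subbundle $V$. For $\tau>0$ define the continuous Hermitian metric
\[
g_\tau(u,w)=h(\pi u,\pi w)+\tau\,h\big((I-\pi)u,(I-\pi)w\big).
\]
Then $V$ and its $h$-orthogonal complement $V^{\perp}$ remain orthogonal for $g_\tau$, and $g_\tau$ restricts to $h$ on $V$; consequently the $g_\tau$-orthogonal projection of $E$ onto $V$ is again $\pi$. By compactness of $S$ and positive-definiteness of each $Q_j$ on $V$, there are constants $0<c\le C<\infty$, independent of $p$ and $j$, such that $Q_j(v,v)\ge c\,|v|_h^2$ for all $v\in V_p$ and $|Q_j(u,w)|\le C\,|u|_h\,|w|_h$ for all $u,w\in E_p$.

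\textbf{The estimate.} Let $e_1,\dots,e_q$ be a $g_\tau$-orthonormal family in $E_p$, and decompose $e_k=v_k+w_k$ with $v_k=\pi e_k\in V_p$ and $w_k\in V^{\perp}_p$. Since $v_k\perp_{g_\tau}w_k$ and $g_\tau|_V=h|_V$, one gets $1=g_\tau(e_k,e_k)=|v_k|_h^2+\tau|w_k|_h^2$, so $|v_k|_h\le 1$ and $|w_k|_h\le\tau^{-1/2}$. Expanding $Q_j(e_k,e_k)=Q_j(v_k,v_k)+2\Re Q_j(v_k,w_k)+Q_j(w_k,w_k)$ and summing,
\[
\sum_{k=1}^{q}Q_j(e_k,e_k)\ \ge\ c\sum_{k=1}^{q}|v_k|_h^2-q\big(2C\tau^{-1/2}+C\tau^{-1}\big).
\]
Because $\dim V_p=d-q+1$, Corollary~\ref{cor-lowerbound} applied in the inner product space $(E_p,g_\tau)$, with the subspace $V_p$ and the $g_\tau$-orthonormal vectors $e_1,\dots,e_q$ (whose $g_\tau$-orthogonal projections onto $V_p$ are the $v_k$), gives $\sum_{k=1}^{q}|v_k|_{g_\tau}^2=\sum_{k=1}^{q}|v_k|_h^2\ge 1$. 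Thus $\sum_k Q_j(e_k,e_k)\ge c-q(2C\tau^{-1/2}+C\tau^{-1})$, which is positive once $\tau$ is large enough, uniformly in $p$, in $j$, and in the choice of orthonormal family. By Theorem~\ref{thm-schur} this is precisely strict $q$-positivity of each $Q_j$ with respect to $g_\tau$, and the reduction step then replaces $g_\tau$ by a smooth metric.

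\textbf{Main obstacle.} The crux — and the reason the rank of $V$ must be exactly $d-q+1$ — is the uniform lower bound $\sum_k|\pi e_k|^2\ge 1$ in the last step: this is exactly where a $q$-dimensional subspace is forced to meet $V$, and it is what allows the positive contribution coming from $V$ to dominate the cross terms after the transverse directions have been stretched. The remaining points (uniformity of $c$ and $C$ from compactness, and $C^0$-density of smooth metrics) are routine.
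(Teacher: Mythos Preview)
Your proof is correct and follows essentially the same approach as the paper's own argument: both fix a background metric, stretch the $V^\perp$ directions by a large factor (your $g_\tau$ with $\tau\to\infty$ corresponds to the paper's $h=\gamma+\kappa\,\gamma(P_{V^\perp}\cdot,P_{V^\perp}\cdot)$ with $\kappa\to\infty$), decompose each member of a $q$-orthonormal family into $V$ and $V^\perp$ parts, invoke Corollary~\ref{cor-lowerbound} to guarantee $\sum_k\|\pi_V e_k\|^2\ge 1$, and then use Theorem~\ref{thm-schur} together with a $C^0$-approximation by a smooth metric. The only cosmetic differences are that the paper separates three constants $A_1,A_2,A_3$ where you use two, and parametrizes the stretch slightly differently.
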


\begin{proof}
	It is sufficient to show the following:

	(*)  If $H$ is a continuous Hermitian form on $E$ which restricts to a positive definite form on $V$ and $\gamma$ is a continuous Hermitian metric on $E$, there is a
	constant $C(H,\gamma)>0$ such whenever $\kappa\geq C(H,\gamma)$, we have $H^h:E\to E$ is strictly $q$-positive, where the metric $h$ is given by
	\begin{equation}\label{eq-g}
		h(z,w)= \gamma(z,w)+ \kappa \cdot\gamma(P_{V^\perp}(z), P_{V^\perp} (w)),
	\end{equation}
	   where $P_{V^\perp}:E\to V^\perp$ is the orthogonal projection with respect to the metric $\gamma$ on the subbundle $V^\perp$ of $E$ whose fibers are orthogonal to those of $V$ with respect to $\gamma$. Notice that this $h$ is a continuous Hermitian metric on $E$.
	
	  Indeed, granted the statement (*), we can choose an arbitrary continuous Hermitian metric $\gamma$ on $E\to S$, and let $h$ be given by \eqref{eq-g}, where we take
	  \[ \kappa=\max\left\{C(Q_j,\gamma), 1\leq j \leq N\right\}.\]
        Then each $Q_j$ is strictly $q$-positive with respect to $h$. Let $g$ be a smooth Hermitian metric on $S$
    uniformly close to $h$. Such an approximation clearly exists locally, and using a partition of unity we can glue such local approximations  on the compact manifold $S$. Such a gluing preserves uniform closeness of the local approximations. Notice that if $h$ is sufficiently uniformly close to $g$ then $Q_j^g$ is also strictly $q$-positive for each $j$, and this is what we want.

	  To prove (*), let
	  \begin{equation}
	  \label{eq-a1def}
	   A_1 = \inf\{ H_p(z,z):p\in S, z\in V_p, \gamma_p(z,z)=1\}.
	  \end{equation}
	  We claim that $A_1>0$, and the infimum in \eqref{eq-a1def} is a minimum. For $p\in S$, by \eqref{eq-rayleigh} we have
	  \[ \min\{ H_p(z,z): z\in V_p, \gamma_p(z,z)=1\}= \lambda_{1}^\gamma(H_p|_{V_p}),\]
  the smallest eigenvalue with respect to $\gamma$ of the restriction of  the form $H_p$ to the subspace $V_p$. Since the form $H$, the metric $\gamma$ and the subbundle $V\subset E$ are continuous, and eigenvalues depend continuously on a matrix,  we easily conclude that the function $p\mapsto \lambda_{1}^\gamma(H_p|_{V_p})$ is continuous on $S$. Further since for each $p$, the restriction of $H_p$ to $V_p$ is positive definite, it follows that  this function is strictly positive at each point of $S$. Consequently, by the compactness of $S$, the number $A_1$, which is the infimum of this function
  is strictly positive and the infimum is a minimum.

  Now consider
  \begin{equation}
  	\label{eq-a2def}
  	A_2= \sup\left\{\abs{H_p(z,z)}: p\in S, z\in V_p^\perp, \gamma_p(z,z)=1\right\}.
  \end{equation}
  We claim $0\leq A_2 <\infty$.  Let $p\in S$. Then clearly we have
  \[  \sup\left\{\abs{H_p(z,z)}: z\in V_p^\perp, \gamma_p(z,z)=1\right\} = \max\left\{\abs{\lambda_1^\gamma(H_p|_{V^\perp})}, \abs{\lambda_{\max}^\gamma(H_p|_{V^\perp})}\right\},\]
  where $\lambda_{\max}^\gamma$ stands for the largest eigenvalue of the restricted Hermitian form with respect to the metric $\gamma$.
  	  An argument similar to above shows that as a function of $p$, the quantity above is continuous, and
  	  therefore it is bounded and consequently its supremum $A_2<\infty$.    Lastly, introduce the quantity
	  \begin{equation}
	  	\label{eq-a3def}
	  	A_3 = \sup \left\{\abs{H_p(z,w)}: p\in S, z\in V_p, w\in V_p^\perp, \gamma_p(z,z)=\gamma_p(w,w)=1\right\}.
	  \end{equation}
	  We claim that $0\leq A_3 <\infty$. Notice first that  if $p\in S$, then we have
	  \begin{align}
	    &\sup \left\{\abs{H_p(z,w)}: z\in V_p, w\in V_p^\perp, \gamma_p(z,z)=\gamma_p(w,w)=1\right\}\nonumber \\\leq &  \sup \left\{\abs{H_p(z,w)}: z\in E, w\in E, \gamma_p(z,z)=\gamma_p(w,w)=1\right\}\nonumber\\
	    = &\max\left\{\abs{\lambda_1^\gamma(H_p)}, \abs{\lambda_{\max}^\gamma(H_p)}\right\}.\label{eq-a3}
	  \end{align}
	  An argument similar to the above two cases shows that  the quantity in \eqref{eq-a3} is a continuous function of $p$. It follows that	  $A_3<\infty$.

	  It now follows that we can choose the number $C=C(H,\gamma)$ so large that we have
	  \begin{equation}
	  	\label{eq-a1bis}
	  	A_1 - \frac{q\cdot A_2}{1+C} - 2 \frac{q\cdot A_3}{\sqrt{1+C}}>0.
	  \end{equation}
	 We claim that  this value of $C=C(H,\gamma)$ works in (*).
	  To justify the claim,  let $\kappa\geq C$ and let $p\in S$, and we need to show that $H_p$ is a strictly $q$-positive operator on $E_p$ with respect to $h$, where
	  $h$ is as in \eqref{eq-g}. Notice that $h$ depends on  $\kappa$.

	  Let $t\in E_p$ be a vector of unit length with respect to the metric $h_p$.
	  Set $u=P_{V}t, v=P_{V^\perp t}$, where $P_V: E\to V$ is the bundle map whose section over $p\in S$ is the orthogonal projection with respect to $\gamma_p$ from $E_p$ to $V_p$.
	  Then we have
	  \begin{align*}
	  	1&=h_p(t,t)= \gamma_p(t,t)+ \kappa\cdot \gamma_p(P_{V^\perp} t, P_{V^\perp} t)\\
	  	& = \gamma_p(P_V t, P_Vt)+(1+\kappa)\gamma_p(P_{V^\perp} t, P_{V^\perp}t)\\
&	  = \gamma_p(u,u)+ (1+\kappa)\gamma_p(v,v),
	\end{align*}
	  from which we see that
	  \begin{equation}
	  	\label{eq-a2bis}
	  	\gamma_p(u,u)\leq 1, \quad \gamma_p(v,v) \leq \frac{1}{1+\kappa}\leq \frac{1}{1+C}.
	  \end{equation}

	  Therefore, since $t=u+v$,
	  \begin{align}
	  	&H(t,t)=H(u,u)+H(v,v)+ 2 \Re H(u,v)\notag\\
	  	&\geq H(u,u)- \abs{H(v,v)}- 2 \abs{H(u,v)}\notag\\
	  	&\geq A_1\gamma_p(u,u) -A_2\gamma_p(v,v) -2A_3 \sqrt{\gamma_p(u,u)}\sqrt{\gamma_p(v,v)}\notag\\
	  	&\geq A_1 \gamma_p(P_V t,P_Vt) - \frac{A_2}{1+C}-2 \frac{A_3}{\sqrt{1+C}}, \label{eq-a4bis}
	  \end{align}
	  where we have used the definitions of the $A_j$ as well as the bounds \eqref{eq-a2bis}.

	  To show that $H_p$ is strictly $q$-positive with respect to $h_p$, it suffices by Theorem~\ref{thm-schur} above to show that the restriction of the form   $H_p$ to each $q$-dimensional subspace of $(E_p,\gamma_p)$ has positive trace with respect to $h_p$. Let  $W$ be a $q$-dimensional subspace of $E_p$, and choose an $h_p$-orthonormal basis $t_1,\dots, t_q$ of $W$. Then we have
	  \begin{align*}
	  	\tr_h(H_p)&= \sum_{k=1}^q H_p(t_k, t_k)\notag\\
	  	&\geq  \sum_{k=1}^q\left(  A_1 \gamma(P_V t_k,P_Vt_k) - \frac{A_2}{1+C}-2 \frac{A_3}{\sqrt{1+C}}\right)&\text{by \eqref{eq-a4bis}}\\
	  	&= A_1  \sum_{k=1}^q \gamma(P_V t_k,P_Vt_k)- \frac{q\cdot A_2}{1+C}-2 \frac{q\cdot A_3}{\sqrt{1+C}} \\
	  	&\geq A_1 - \frac{q\cdot A_2}{1+C}-2 \frac{q\cdot A_3}{\sqrt{1+C}}&\text{ by Corollary~\ref{cor-lowerbound}}\\& >0.
	  \end{align*}

\end{proof}

\subsection{The construction in the top degree}

We next consider the construction that we will use in the $q=n-1$ case considered in Theorem \ref{thm:q_equals_n_minus_one}, in which we  still assume the existence of a direction in which both the Levi form and the Hessian of the weight function is positive, though we no longer assume that this direction varies continuously on the boundary. We will prove the following.
\begin{thm} \label{thm-keyconstruction_top_degree}
	Let $S$ be a smooth compact manifold and $E\to S$ a smooth complex vector bundle of rank $d$.  Let $Q_1$ and $Q_2$ be continuous Hermitian forms on the bundle $E$ such that at each point $p\in S$, there is a vector $v$ in
	the fiber of $E$ over the point $p$,  for which $Q_1(v,v)>0$ and $Q_2(v,v)>0$.
	Then there is a smooth Hermitian metric $g$ on $E$ such that  $\Tr_g Q_j>0$ for each $j\in\{1,2\}$.
\end{thm}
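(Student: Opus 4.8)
The plan is to localize the rank-one metric modification used in the proof of Theorem~\ref{thm-keyconstruction} and then glue the local metrics together by averaging their \emph{inverses}. The point of working locally is that, while the common positive direction of $Q_1$ and $Q_2$ need not be selectable continuously over all of $S$, on a small patch it may be chosen \emph{smoothly} --- for instance as a constant section in a local trivialization of $E$.

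First I would build a cover. For each $p_0\in S$ pick $v_{p_0}$ in the fibre over $p_0$ with $Q_1(v_{p_0},v_{p_0})>0$ and $Q_2(v_{p_0},v_{p_0})>0$; in a local trivialization of $E$ near $p_0$, extend $v_{p_0}$ to the constant (hence smooth and nowhere-vanishing) section $v$, and invoke the continuity of $Q_1,Q_2$ to get a neighborhood $U_{p_0}$ of $p_0$ on which $Q_1(v,v)>0$ and $Q_2(v,v)>0$. By compactness of $S$, pass to a finite subcover $U_1,\dots,U_N$ with smooth nowhere-zero sections $v_k$ of $E|_{U_k}$ satisfying $Q_j(v_k,v_k)>0$ on $U_k$ for $j=1,2$; then shrink to an open cover $\{U_k'\}$ of $S$ with $\overline{U_k'}\subset U_k$, and fix a smooth partition of unity $\{\chi_k\}$ with $\supp\chi_k\subset U_k'$ and $\sum_k\chi_k\equiv1$. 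Fix also an auxiliary smooth Hermitian metric $\gamma$ on $E$, let $P_k$ be the $\gamma$-orthogonal projection of $E|_{U_k}$ onto the orthogonal complement of $\C v_k$ (smooth, since $v_k$ is smooth and nowhere zero), and for a constant $\kappa>0$ put $g_k^\kappa(x,y)=\gamma(x,y)+\kappa\,\gamma(P_kx,P_ky)$, a smooth Hermitian metric on $E|_{U_k}$.

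Exactly as in the computation in Theorem~\ref{thm-keyconstruction}, a $g_k^\kappa$-orthonormal frame of $E|_{U_k}$ is obtained from the $\gamma$-unit vector along $v_k$ together with a $\gamma$-orthonormal frame of $(\C v_k)^\perp$ rescaled by $(1+\kappa)^{-1/2}$, so that
\[
\Tr_{g_k^\kappa}Q_j=\frac{Q_j(v_k,v_k)}{\gamma(v_k,v_k)}+\frac{1}{1+\kappa}\,\Tr_\gamma\bigl(Q_j|_{(\C v_k)^\perp}\bigr).
\]
On the compact set $\overline{U_k'}$ the first term is bounded below by a positive constant while the second is $O(1/\kappa)$, so for $\kappa$ large enough --- uniformly over the finitely many $k$ and for both $j$ --- we get $\Tr_{g_k^\kappa}Q_j>0$ on $\overline{U_k'}$; fix such a $\kappa$ and write $g_k:=g_k^\kappa$. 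To glue, recall that in any local frame, with $G$ and $Q$ the matrices of a Hermitian metric $g$ and a Hermitian form $Q$, one has $\Tr_gQ=\operatorname{tr}(G^{-1}Q)$; thus $\Tr_gQ$ depends $\R$-linearly on $G^{-1}$, and $G\mapsto G^{-1}$ is a bijection of positive-definite Hermitian matrices --- globally, a Hermitian metric $g$ on $E$ corresponds to a Hermitian metric $g^{-1}$ on $E^*$. Each $\chi_kg_k^{-1}$ (the induced metric on $E^*|_{U_k}$, multiplied by $\chi_k$) extends smoothly by zero over $S$, so $\beta:=\sum_k\chi_kg_k^{-1}$ is a smooth Hermitian form on $E^*$; at each $p$ it is a convex combination, with at least one strictly positive weight, of positive-definite forms, hence positive-definite, so $\beta=g^{-1}$ for a unique smooth Hermitian metric $g$ on $E$. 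By $\R$-linearity, for every $p\in S$ and $j\in\{1,2\}$,
\[
\Tr_gQ_j(p)=\sum_{k:\,\chi_k(p)>0}\chi_k(p)\,\Tr_{g_k}Q_j(p)>0,
\]
since $\chi_k(p)>0$ forces $p\in\supp\chi_k\subset\overline{U_k'}$. This is the desired metric.

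The main obstacle is precisely the loss of continuity of the common positive direction, which is what prevents the single global metric modification of Theorem~\ref{thm-keyconstruction} from being applied directly; patching repairs this. The one delicate point in the patching is that $\Tr_gQ$ is additive in the inverse metric $g^{-1}$ rather than in $g$ itself, so one must average the inverse metrics $g_k^{-1}$ --- using that a convex combination of fibre metrics on $E^*$ is again a fibre metric on $E^*$ --- rather than the $g_k$.
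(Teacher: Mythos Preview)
Your proof is correct and takes a genuinely different, more elementary route than the paper's.  The paper proves the theorem via Lemma~\ref{lem:two_Hermitian_forms}, a pointwise result producing, for each pair $(Q_1,Q_2)$ of Hermitian forms on a fixed inner-product space sharing a positive direction, a metric $h(Q_1,Q_2)$ with both traces positive, depending \emph{continuously} on $(Q_1,Q_2)$; its proof studies the one-parameter family $\langle\cdot,\cdot\rangle_x=\langle\cdot,\cdot\rangle-x_1Q_1-x_2Q_2$ over a convex domain in~$\mathbb{R}^2$, shows $\xi(x)=-\log\det g(x)$ is convex with $\nabla\xi(x)=(\Tr_xQ_1,\Tr_xQ_2)$, and locates a suitable point by analyzing level curves.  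You instead localize so that the common positive direction can be chosen smoothly on each patch, apply the rank-one modification of Theorem~\ref{thm-keyconstruction} directly, and then glue by averaging \emph{inverse} metrics, using the linearity of $\Tr_gQ=\operatorname{tr}(G^{-1}Q)$ in $G^{-1}$.  This last observation is the crux that makes the partition-of-unity argument go through, and it is not used in the paper.  Your argument is shorter, avoids the convexity analysis entirely, and in fact extends verbatim to any finite collection $Q_1,\dots,Q_N$ sharing a pointwise common positive direction (whereas the paper's Lemma~\ref{lem:two_Hermitian_forms} is tied to the two-form, $\mathbb{R}^2$ setting).  What the paper's approach buys in return is the continuous pointwise assignment $(Q_1,Q_2)\mapsto h(Q_1,Q_2)$, which is more than is needed here but may be of independent interest.
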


To prove this key result, we will need the following special case of Jacobi's formula for the derivative of the determinant (see \cite[p. 169]{magnus}, \cite[p. 798]{kline2}), obtained when the matrix under consideration is Hermitian.  Since this result is standard, we omit the proof.
\begin{lem}
\label{lem:determinant_derivative}
  For integers $d,m\geq 1$, let $\mathcal{O}\subset\mathbb{R}^m$ be an open set and let $\{M_x\}_{x\in\mathcal{O}}$ be a family of positive-definite $d\times d$ Hermitian matrices with entries in $C^1(\mathcal{O})$.  For any $1\leq j\leq m$, we have
  \begin{equation}
  \label{eq:determinant_derivative}
    \frac{\partial}{\partial x_j}\log\det M_x=\Tr\left((M_x)^{-1}\frac{\partial}{\partial x_j}M_x\right)
  \end{equation}
  on $\mathcal{O}$.
\end{lem}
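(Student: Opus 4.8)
The plan is to reduce the statement to the standard formula for the differential of the determinant map, combined with the chain rule. First I would observe that the positive-definiteness of $M_x$ is used only to ensure $\det M_x>0$ for every $x\in\mathcal{O}$, so that $\log\det M_x$ is a well-defined real-valued $C^1$ function with
\[
  \frac{\partial}{\partial x_j}\log\det M_x=\frac{1}{\det M_x}\,\frac{\partial}{\partial x_j}\det M_x;
\]
hence it suffices to prove the identity $\partial_{x_j}\det M_x=\det(M_x)\,\Tr\bigl(M_x^{-1}\,\partial_{x_j}M_x\bigr)$, which in fact holds for any $C^1$ family of invertible (not necessarily Hermitian) matrices.

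The key ingredient I would isolate is the following: for an invertible $d\times d$ complex matrix $A$ and any $d\times d$ complex matrix $C$,
\[
  \det(A+sC)=\det(A)\bigl(1+s\,\Tr(A^{-1}C)\bigr)+O(s^2)\qquad\text{as } s\to 0.
\]
This follows by factoring $\det(A+sC)=\det(A)\det(I+sA^{-1}C)$ and expanding $\det(I+sB)$ with $B=A^{-1}C$: in the permutation (equivalently, multilinear) expansion of the determinant, the unique term linear in $s$ arises from taking the entry $sB_{ii}$ from the $i$-th column together with the $1$'s on the remaining diagonal, and summing over $i$ gives $s\sum_i B_{ii}=s\,\Tr(B)$, while every other contribution is $O(s^2)$. (Equivalently, the coefficient of $s$ can be read off the characteristic polynomial of $B$.)

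Finally, fix $x\in\mathcal{O}$ and $1\le j\le m$, and let $e_j$ denote the $j$-th standard basis vector of $\mathbb{R}^m$. Since the entries of $M_x$ are $C^1$, the partial derivative $\partial_{x_j}M_x$ exists and $M_{x+te_j}=M_x+t\,\partial_{x_j}M_x+o(t)$ as $t\to 0$; applying the displayed expansion with $A=M_x$ and $C=\partial_{x_j}M_x$ (the $o(t)$ remainder being harmless since $\det$ is smooth near the invertible matrix $M_x$) gives
\[
  \det M_{x+te_j}=\det(M_x)\bigl(1+t\,\Tr(M_x^{-1}\,\partial_{x_j}M_x)\bigr)+o(t),
\]
whence $\partial_{x_j}\det M_x=\det(M_x)\,\Tr\bigl(M_x^{-1}\,\partial_{x_j}M_x\bigr)$. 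Dividing by $\det M_x>0$ yields \eqref{eq:determinant_derivative}. There is no real obstacle here; the only steps requiring a little care are the clean justification of the first-order expansion of $\det(I+sB)$ and the passage to the difference quotient, both of which are entirely routine.
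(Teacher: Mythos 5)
Your proof is correct. The paper states this as a standard special case of Jacobi's formula and omits the proof entirely (citing Magnus--Neudecker and Kline), and your argument --- factoring $\det(A+sC)=\det(A)\det(I+sA^{-1}C)$, extracting the term linear in $s$ to get $\Tr(A^{-1}C)$, and then applying the chain rule with $\det M_x>0$ from positive-definiteness --- is exactly the standard derivation being invoked, with all steps justified.
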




The next Lemma contains the main part of the proof of Theorem~\ref{thm-keyconstruction_top_degree}.

\begin{lem}
\label{lem:two_Hermitian_forms}
	Let $E$ be a finite dimensional inner product space with $\dim E \geq 2$,
	and denote by $\mathscr{Q}$ the collection of ordered pairs
	of Hermitian forms on $E$ sharing a  common positive direction, i.e.,
$\mathscr{Q}$ consists of pairs $(Q_1, Q_2)$ where each $Q_j$ is a Hermitian form on $E$, and
	\[ \{v\in E: Q_1(v,v)>0\}\cap \{v\in E: Q_2(v,v)>0\} \not=\emptyset.\]
	Denoting by $\mathscr{H}$ the cone of Hermitian metrics on $E$, there is a continuous map
	\[ h:\mathscr{Q}\to \mathscr{H}\]
	such that the traces $\tr_{h(Q_1,Q_2)}Q_1$ and $\tr_{h(Q_1,Q_2)}Q_2$ computed with respect to the metric $h(Q_1,Q_2)$ are both positive.
\end{lem}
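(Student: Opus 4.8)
The plan is to build $h(Q_1,Q_2)$ by \emph{averaging rank-one forms over the set of common positive directions}; this sidesteps the genuine obstruction that a single common positive vector cannot be chosen to depend continuously on $(Q_1,Q_2)$. Write $g_0$ for the given inner product on $E$. First I would record the dictionary relating traces of Hermitian forms with respect to a varying metric to ordinary operator traces: if $A$ is the positive-definite $g_0$-self-adjoint operator representing a metric $h$ via $h(z,w)=g_0(Az,w)$, then $\{A^{-1/2}e_k\}$ is an $h$-orthonormal basis of $E$ whenever $\{e_k\}$ is a $g_0$-orthonormal basis, so
\[
\tr_h Q=\sum_k Q(A^{-1/2}e_k,A^{-1/2}e_k)=\tr\!\left(A^{-1}Q^{g_0}\right)
\]
for every Hermitian form $Q$ on $E$. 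Hence it is enough to produce, continuously in $(Q_1,Q_2)\in\mathscr Q$, a positive-definite $g_0$-self-adjoint operator $C=C(Q_1,Q_2)$ with $\tr(C\,Q_1^{g_0})>0$ and $\tr(C\,Q_2^{g_0})>0$, and then to set $h(Q_1,Q_2)(z,w)=g_0(C^{-1}z,w)$; note it is $C^{-1}$, not $C$, that becomes the metric operator.

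Concretely, I would fix a continuous cutoff $\phi\colon\mathbb R\to[0,\infty)$ with $\phi\equiv0$ on $(-\infty,0]$, $\phi>0$ on $(0,\infty)$, and $t\,\phi(t)\ge0$ for all $t$ (e.g.\ $\phi(t)=\max(t,0)$). Let $\mathbb S=\{v\in E:g_0(v,v)=1\}$ carry its surface measure $d\sigma$, and for $v\in E$ let $v\otimes\bar v$ denote the positive-semidefinite rank-one operator $w\mapsto g_0(w,v)\,v$, which satisfies $g_0((v\otimes\bar v)w,w)=|g_0(w,v)|^2$ and $\tr((v\otimes\bar v)T)=g_0(Tv,v)$ for every operator $T$ on $E$. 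I would take
\[
C(Q_1,Q_2)=\int_{\mathbb S}\phi\big(Q_1(v,v)\big)\,\phi\big(Q_2(v,v)\big)\,(v\otimes\bar v)\,d\sigma(v).
\]
Continuity of $(Q_1,Q_2)\mapsto C(Q_1,Q_2)$ is immediate, since the integrand depends continuously on $(Q_1,Q_2)$ uniformly over the compact sphere $\mathbb S$; continuity of $h$ follows because inversion is continuous on the positive-definite operators.

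It will remain to verify that $C$ is positive definite and that the two traces are positive. Put $\mathcal C=\{v\in\mathbb S:Q_1(v,v)>0\text{ and }Q_2(v,v)>0\}$; this set is open by continuity of $v\mapsto Q_j(v,v)$, and nonempty for $(Q_1,Q_2)\in\mathscr Q$ after rescaling a common positive direction onto $\mathbb S$. A nonempty open subset of $\mathbb S$ has positive surface measure and spans $E$, so for each $w\ne0$ the integrand of
\[
g_0\big(C(Q_1,Q_2)w,w\big)=\int_{\mathbb S}\phi\big(Q_1(v,v)\big)\,\phi\big(Q_2(v,v)\big)\,|g_0(w,v)|^2\,d\sigma(v)
\]
is nonnegative and strictly positive on a nonempty open subset of $\mathcal C$, whence $g_0(C(Q_1,Q_2)w,w)>0$; thus $h(Q_1,Q_2)\in\mathscr H$. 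For the traces, the dictionary with $A=C(Q_1,Q_2)^{-1}$ and the identity $\tr((v\otimes\bar v)Q_j^{g_0})=Q_j(v,v)$ give
\[
\tr_{h(Q_1,Q_2)}Q_j=\tr\!\left(C(Q_1,Q_2)\,Q_j^{g_0}\right)=\int_{\mathbb S}\phi\big(Q_1(v,v)\big)\,\phi\big(Q_2(v,v)\big)\,Q_j(v,v)\,d\sigma(v),
\]
and for $j=1$ the integrand equals $\big(\phi(Q_1(v,v))\,Q_1(v,v)\big)\,\phi(Q_2(v,v))$, which is nonnegative everywhere (because $t\,\phi(t)\ge0$ and $\phi\ge0$) and strictly positive on the nonempty open set $\mathcal C$; hence $\tr_{h(Q_1,Q_2)}Q_1>0$, and the case $j=2$ is identical.

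The real content — and the one point requiring care — is the choice to average over \emph{all} common positive directions instead of picking one, together with the observation that makes this succeed: the cone of common positive directions, though it admits no continuous trivialization by a single vector, is always open and nonempty, hence ``fat'' enough that $C$ comes out positive definite rather than merely positive semidefinite. Granting that, the trace identity, the sign bookkeeping (in particular, that the metric is $C^{-1}$ and not $C$), and the continuity assertion are all routine. This approach does not seem to need Jacobi's formula, Lemma~\ref{lem:determinant_derivative}; an alternative route would instead take $h(Q_1,Q_2)$ to be the analytic center of the convex set of metrics satisfying the two trace inequalities, and there that formula is exactly what identifies the critical-point equation.
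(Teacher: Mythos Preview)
Your proof is correct and takes a genuinely different route from the paper's. The paper works inside the two-parameter family of Hermitian forms $\langle\cdot,\cdot\rangle_x=\langle\cdot,\cdot\rangle-x_1 Q_1-x_2 Q_2$, shows via Jacobi's formula (their Lemma~\ref{lem:determinant_derivative}) that the potential $\xi(x)=-\log\det g(x)$ satisfies $\partial\xi/\partial x_r=\tr_x Q_r$, verifies that $\xi$ is a convex exhaustion of the region $\mathcal O$ where $\langle\cdot,\cdot\rangle_x$ is positive definite, and then uses a mean-value argument on the level curve $\xi^{-1}(\{1\})$ in the first quadrant to locate a point where both partial derivatives are positive; continuous dependence on $(Q_1,Q_2)$ is extracted from the stability of this level-curve picture, with a separate argument when $Q_1$ is a positive multiple of $Q_2$. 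Your averaging over the cone of common positive directions is substantially more elementary: it avoids the determinant identity, the convexity analysis, and the case split, and it generalizes at once to any finite family $Q_1,\ldots,Q_N$ sharing a common positive direction simply by taking the weight $\prod_i\phi(Q_i(v,v))$, whereas the paper's two-dimensional construction is tied to $N=2$. The one structural feature the paper's approach retains and yours discards is that its output lies in the explicit affine slice $\langle\cdot,\cdot\rangle-x_1 Q_1-x_2 Q_2$, which could be useful if one later wants tighter control of $h$; for the lemma as stated, however, your construction is cleaner.
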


\begin{rem} The space of Hermitian forms on a finite dimensional vector space has a natural linear topology
	which can be represented by a variety of norms (see \cite[Section~5.6]{HoJo85})). This induces the topologies
	given on the spaces $\mathscr{Q}$ and $\mathscr{H}$.
\end{rem}

\begin{proof}
Let $d=\dim E$ and denote by $\ipr{\cdot, \cdot}$  its metric.  For $x\in \rl^2$, let $\ipr{\cdot, \cdot}_x$ denote the Hermitian form on $E$ given by
	\[ \ipr{\cdot, \cdot}_x=\left<\cdot,\cdot\right>-x_1Q_1(\cdot,\cdot)-x_2Q_2(\cdot,\cdot). \]
For notational simplicity,  when $ \ipr{\cdot, \cdot}_x$ is a metric,  for a Hermitian form $Q$ on $E$,  we will also use the shorthand  $\Tr_xQ$ for the trace of the form $Q$ with respect to the metric $ \ipr{\cdot, \cdot}_x$, instead of
 the more correct notation $\Tr_{ \ipr{\cdot, \cdot}_x}Q$.
	We will prove the lemma by showing that
	 there exists a point $\gamma(Q_1,Q_2)\in\mathbb{R}^2$ such that
	\begin{itemize}
		\item $\gamma_1(Q_1,Q_2)>0$ and $\gamma_2(Q_1,Q_2)>0$,
		\item the form $	\left<\cdot,\cdot\right>_{\gamma(Q_1,Q_2)}$ is a
	 Hermitian metric on $\mathbb{C}^d$, and
		\item the traces $\Tr_{\gamma(Q_1,Q_2)} Q_1$ and $\Tr_{\gamma(Q_1,Q_2)} Q_2$ of $Q_1$ and $Q_2$ computed with respect to this new metric are both positive.
	\end{itemize}
	Furthermore, $\gamma(Q_1,Q_2)$ can be chosen to depend continuously on $Q_1$ and $Q_2$, so in the conclusion of the lemma we may take $h(Q_1,Q_2)=
	\ipr{\cdot, \cdot}_{\gamma(Q_1,Q_2)}$.

	 Let $\mathcal{O}\subset\mathbb{R}^2$ denote the set of all $x\in\mathbb{R}^2$ such that $\left<\cdot,\cdot\right>_x$ is positive definite.  Clearly $\mathcal{O}$ is an open set and $(0,0)\in\mathcal{O}$.  For any $x\in\mathbb{R}^2\backslash\{(0,0)\}$, let $\lambda_x$ denote the largest eigenvalue of $x_1 Q_1+x_2 Q_2$.  If $\lambda_x\leq 0$, then $tx\in\mathcal{O}$ for all $t\geq 0$.  If $\lambda_x>0$, then $tx\in\mathcal{O}$ for $t\geq 0$ if and only if $0\leq t<\lambda_x^{-1}$.  Hence, $\mathcal{O}$ is a starlike domain centered at $(0,0)$.  Let $\mathcal{O}^+$ denote the set of all $x\in\mathcal{O}$ such that $x_1>0$ and $x_2>0$.  When $x\in\mathcal{O}^+$, then $x_1Q_1(v,v)+x_2Q_2(v,v)>0$, so $\lambda_x>0$ and hence
	\begin{equation}
		\label{eq:bounded_in_first_quadrant}
		\mathcal{O}^+\text{ is a bounded subset of }\mathcal{O}.
	\end{equation}
	Fix an orthonormal basis $\{e_j\}_{1\leq j\leq d}$ for $E$ with respect to the base metric $\left<\cdot,\cdot\right>$.  For $x\in\mathcal{O}$, define a $d\times d$ positive-definite Hermitian matrix $g(x)$ by $g(x)=\left(g_{j\bar k}(x)\right)_{1\leq j,k\leq d}$ where $g_{j\bar k}(x)=\left<e_j,e_k\right>_x$.  Denote the inverse of this matrix by $g^{-1}(x)=\left(g^{\bar k j}(x)\right)_{1\leq j,k\leq d}$.  Since $g(x)g^{-1}(x)=I$ for all $x\in\mathcal{O}$, we may differentiate by $x_r$ for any $r\in\{1,2\}$ to obtain
	\[
	\left(\frac{\partial}{\partial x_r}g(x)\right)g^{-1}(x)+g(x)\frac{\partial}{\partial x_r}g^{-1}(x)=0,
	\]
	or
	\begin{equation}
		\label{eq:g_inverse_derivative}
		\frac{\partial}{\partial x_r}g^{-1}(x)=-g^{-1}(x)\left(\frac{\partial}{\partial x_r}g(x)\right)g^{-1}(x).
	\end{equation}
	Let $\{u_j(x)\}_{1\leq j\leq d}$ be an orthonormal basis for $\mathbb{C}^d$ with respect to $\left<\cdot,\cdot\right>_x$.  Then for every $1\leq j\leq d$, $e_j=\sum_{k=1}^d A_j^k(x) u_k(x)$ for some non-singular matrix $A(x)=(A_j^k(x))_{1\leq j,k\leq d}$.  For $1\leq j,k\leq d$,
	\[
	g_{j\bar k}(x)=\left<e_j,e_k\right>_x=\sum_{\ell=1}^d A_j^\ell(x)\overline{A_k^\ell(x)},
	\]
	so $g(x)=A(x)A^*(x)$.  If we right-multiply by $g^{-1}(x)A(x)$, we obtain
	\[
	A(x)=A(x)A^*(x)g^{-1}(x)A(x).
	\]
	Since $A(x)$ is non-singular, this necessarily implies that
	\begin{equation}
		\label{eq:coordinate_change_metric}
		A^*(x)g^{-1}(x)A(x)=I\text{ for all }x\in\mathcal{O}.
	\end{equation}

	We define a function $\xi\in C^\infty(\mathcal{O})$ by $\xi(x)=-\log\det g(x)$. This function $\xi$ is independent of the choice of basis $\{e_j\}$ used to define the matrix $g$, since if $U$ is a unitary (with respect to $\ipr{\cdot,\cdot}$) change of coordinates the matrix $g$ transforms to $U^*gU$ which has the same determinant.  Observe that $g(0,0)$ is the identity matrix, so $\xi(0,0)=0$.  As $x\rightarrow b\mathcal{O}$, $\xi(x)\rightarrow\infty$, so \eqref{eq:bounded_in_first_quadrant} implies that
	\begin{equation}
		\label{eq:varphi_range}
		(0,\infty)\subset\range\left(\xi|_{\text{positive }x_1\text{-axis}}\right)\text{ and }(0,\infty)\subset\range\left(\xi|_{\text{positive }x_2\text{-axis}}\right).
	\end{equation}
	Fix $r\in\{1,2\}$.  Using \eqref{eq:determinant_derivative}, we have
	\[
	\frac{\partial\xi}{\partial x_r}(x)=-\Tr\left(g^{-1}(x)\frac{\partial}{\partial x_r}g(x)\right),
	\]
	where the trace is computed with respect to our base metric.  Since $g(x)$ is a linear function, we may easily compute the derivatives to obtain
	\begin{equation}
		\label{eq:varphi_derivative_base_coordinates}
		\frac{\partial\xi}{\partial x_r}(x)=\sum_{j,k=1}^ng^{\bar k j}(x)Q_r(e_j,e_k).
	\end{equation}
	Using \eqref{eq:coordinate_change_metric} to simplify after changing coordinates, we have
	\[
	\frac{\partial\xi}{\partial x_r}(x)=\sum_{j,k,\ell,m=1}^d\overline{A_k^m(x)}g^{\bar k j}(x)A_j^\ell(x)Q_r(u_\ell(x),u_m(x))=\sum_{j=1}^d Q_r(u_j(x),u_j(x)).
	\]
	Since we denote the trace with respect to $\left<\cdot,\cdot\right>_x$ by $\Tr_x$, we have
	\begin{equation}
		\label{eq:varphi_derivative}
		\frac{\partial\xi}{\partial x_r}(x)=\Tr_x Q_r\text{ for all }r\in\{1,2\}.
	\end{equation}
	For $r,s\in\{1,2\}$, we may use \eqref{eq:g_inverse_derivative} to differentiate \eqref{eq:varphi_derivative_base_coordinates} and obtain
	\[
	\frac{\partial^2\xi}{\partial x_s\partial x_r}(x)=-\sum_{j,k,\ell,m=1}^d g^{\bar k m}(x)\left(\frac{\partial}{\partial x_s}g_{m\bar\ell}(x)\right)g^{\bar\ell j}(x)Q_r(e_j,e_k).
	\]
	Once again, it is easy to compute derivatives of the linear functions comprising $g(x)$, so we have
	\[
	\frac{\partial^2\xi}{\partial x_s\partial x_r}(x)=\sum_{j,k,\ell,m=1}^d g^{\bar k m}(x)Q_s(e_m,e_\ell)g^{\bar\ell j}(x)Q_r(e_j,e_k).
	\]
	As before, we may use \eqref{eq:coordinate_change_metric} to simplify after changing coordinates and obtain
	\begin{align*}
		\frac{\partial^2\xi}{\partial x_s\partial x_r}(x)&=\sum_{j,k=1}^d Q_s(u_k(x),u_j(x))Q_r(u_j(x),u_k(x))\\
		&=\sum_{j,k=1}^d Q_r(u_j(x),u_k(x))\overline{Q_s(u_j(x),u_k(x))}.
	\end{align*}
	In other words, $\frac{\partial^2\xi}{\partial x_s\partial x_r}(x)$ is simply the inner product of $Q_r$ and $Q_s$ using the standard inner product for matrices with respect to the metric $\left<\cdot,\cdot\right>_x$ (see Section 5.6 in \cite{HoJo85} for further discussion of the associated $\ell^2$ norm).  Since $Q_1(v,v)>0$ and $Q_2(v,v)>0$ by hypothesis, neither Hermitian form vanishes, so we must have $\frac{\partial^2\xi}{\partial x_1^2}(x)>0$ and $\frac{\partial^2\xi}{\partial x_2^2}(x)>0$.  By the Cauchy-Schwarz inequality, we have $\frac{\partial^2\xi}{\partial x_1^2}(x)\frac{\partial^2\xi}{\partial x_2^2}(x)\geq\left(\frac{\partial^2\xi}{\partial x_1\partial x_2}(x)\right)^2$, so the Hessian of $\xi$ is positive semi-definite, and hence $\xi$ is convex.  Furthermore, $\xi$ is strictly convex unless $Q_1=\mu Q_2$ for some $\mu\in\mathbb{R}$.  Since $Q_1(v,v)>0$ and $Q_2(v,v)>0$, we must have $\mu>0$ in this case.  To summarize:
	\begin{gather}
		\label{eq:varphi_convexity}\xi\text{ is a convex exhaustion function on }\mathcal{O}\text{ and}\\
		\label{eq:varphi_strict_convexity}\text{either }\xi\text{ is strictly convex on }\mathcal{O}\text{ or }
		Q_1=\mu Q_2\text{ for some }\mu>0.
	\end{gather}
	We observe that \eqref{eq:varphi_convexity} implies that $\mathcal{O}$ is a convex set.
	
	If $Q_1=\mu Q_2$ for some $\mu>0$, then $\xi(x)=f(\mu x_1+x_2)$ for some real-valued function $f$ defined on some interval in $\mathbb{R}$. By \eqref{eq:varphi_convexity}, $f$ must be convex (we can easily check that $f$ is strictly convex, but we will not need this).  By \eqref{eq:bounded_in_first_quadrant}, there must exist a finite real number $b>0$ such that $\lim_{t\rightarrow b^-}f(t)=\infty$.  Since $f$ is convex, $f'$ is non-decreasing, and hence there exists $a\geq 0$ such that $f'(t)>0$ for all $a<t<b$, and, if $a>0$, $f'(t)\leq 0$ for all $0\leq t<a$.  By \eqref{eq:varphi_derivative}, $\Tr_x Q_1=\mu f'(\mu x_1+x_2)$ and $\Tr_x Q_2=f'(\mu x_1+x_2)$, so $\Tr_x Q_1>0$ and $\Tr_x Q_2>0$ for $x\in\mathcal{O}$ if and only if $a<\mu x_1+x_2<b$.  By \eqref{eq:varphi_range}, $f(a)\leq 0$, so there exists $a<c<b$ such that $f(c)=1$.  Let $\gamma(Q_1,Q_2)=\left(\frac{c}{2\mu},\frac{c}{2}\right)$, which is the midpoint of the line segment connecting $(0,c)$ to $\left(\frac{c}{\mu},0\right)$, a subset of the level curve $\xi^{-1}(\{c\})$.  Henceforth, \eqref{eq:varphi_strict_convexity} allows us to assume that $\xi$ is strictly convex.

	Consider the map $\psi:\mathcal{O}\rightarrow\mathbb{R}^2$ defined by $\psi(x)=\nabla\xi(x)$ for all $x\in\mathcal{O}$.  Suppose that $\psi(y)=\psi(z)$ for some $y,z\in\mathcal{O}$.  Since $\mathcal{O}$, the line segment $ty+(1-t)z\in\mathcal{O}$ for all $0\leq t\leq 1$.  By Rolle's Theorem, there must exist $0<s<1$ such that $\frac{d}{dt}\left((y-z)\cdot\nabla\xi(ty+(1-t)z)\right)\big|_{t=s}=0$.  Equivalently,
	\[
	\sum_{j,k=1}^2(y_j-z_j)\frac{\partial^2\xi}{\partial x_j\partial x_k}(sy+(1-s)z)(y_k-z_k)=0.
	\]
	Since $\xi$ is strictly convex, $\nabla^2\xi$ is positive definite, so this means that $y=z$.  Hence, $\psi$ is injective.  The Jacobian of $\psi$ is the Hessian of $\xi$, which is non-singular since $\xi$ is strictly convex.  Hence, the Inverse Function Theorem guarantees that $\psi$ admits a smooth local inverse.  Taken together, we see that $\psi$ is a smooth diffeomorphism from $\mathcal{O}$ onto its range.

	Let $\Gamma=\xi^{-1}(\{1\})$ be a level curve of $\xi$, and let $\tilde\Gamma$ denote the set of all $x\in\Gamma\cap\mathcal{O}^+$ such that $\psi_1(x)>0$ and $\psi_2(x)>0$.  Since $\xi$ is strictly convex, $\Gamma$ is the boundary of the strictly convex domain $\{x\in\mathcal{O}:\xi(x)<1\}$.  Hence, $\psi(\Gamma)$ is a smooth curve which crosses each radial line segment at most once, which implies that $\tilde\Gamma$ is a connected curve (if it is non-empty).  Now \eqref{eq:varphi_range} guarantees that there must exist $a>0$ and $b>0$ such that $\xi(a,0)=1$ and $\xi(0,b)=1$.  If there are two such values for either parameter, we choose the largest value, so that $\frac{\partial\xi}{\partial x_1}(a,0)>0$ and $\frac{\partial\xi}{\partial x_2}(0,b)>0$.  Hence $(a,0)$ and $(0,b)$ are endpoints of a smooth curve in $\mathcal{O}^+$.  By the Extended Mean Value Theorem, there must exist a point $y\in\Gamma\cap\mathcal{O}^+$ such that $(a,-b)$ is tangential to $\Gamma$ at $y$, and hence $\nabla\xi(y)$ is a positive multiple of $(b,a)$.  This means that $\frac{\partial\xi}{\partial x_1}(y)>0$ and $\frac{\partial\xi}{\partial x_2}(y)>0$, so \eqref{eq:varphi_derivative} implies that $\Tr_y Q_1>0$ and $\Tr_y Q_2>0$.  As a result, $y\in\tilde\Gamma$ and so $\tilde\Gamma$ is not empty.  Let $\gamma(Q_1,Q_2)$ denote the midpoint of $\tilde\Gamma$ with respect to the standard Euclidean arclength (a level curve of a convex function in a bounded set $\mathcal{O}^+$ must be of finite length).

	To see that $\gamma(Q_1,Q_2)$ depends continuously on $Q_1$ and $Q_2$, it suffices to note that $g(x)$ is a linear function of the entries of $Q_1$ and $Q_2$ with respect to the coordinates $\{e_j\}_{1\leq j\leq d}$, and hence $g(x)$ depends continuously on $Q_1$ and $Q_2$.  This means that $\xi(x)$ depends continuously on $Q_1$ and $Q_2$, and by \eqref{eq:varphi_derivative}, $\nabla\xi(x)$ also depends continuously on $Q_1$ and $Q_2$.  Since $1$ is never a critical value of $\xi$, $\Gamma\cap\mathcal{O}^+$ will depend continuously on $Q_1$ and $Q_2$ with respect to Hausdorff distance.  Since $\nabla\xi(x)$ depends continuously on $Q_1$ and $Q_2$, $\tilde\Gamma$ must also depend continuously on $Q_1$ and $Q_2$.  Since $\gamma(Q_1,Q_2)$ is defined to be the midpoint of $\tilde\Gamma$ (even in the case in which $Q_1=\mu Q_2$), $\gamma$ is a continuous function.

\end{proof}
Finally, we are ready to prove the analog to Theorem \ref{thm-keyconstruction} in the top degree when the subbundle is not continuous.

\begin{proof}[Proof of Theorem~\ref{thm-keyconstruction_top_degree}] Fix a Hermitian metric on the bundle $E$.  For each $p\in S$, now we have an inner product on $E_p$, and the two given Hermitian forms $Q_1(p), Q_2(p)$ for which by hypothesis there is a common positive direction. Therefore,
by Lemma~\ref{lem:two_Hermitian_forms} we have a Hermitian metric on $E_p$ given by 	$h(Q_1(p), h(Q_2(p)))$. If we define a continuous metric on $g_1$ on $E$ by
$g_1(p)=h(Q_1(p), h(Q_2(p)))$, then  $\tr_{g_1(p)}Q_j(p)$ is positive for each $p$ and $j\in \{1,2\}$. After a standard regularization of $g_1$, we obtain a smooth metric $g$ with the required properties (positive traces on compact sets are stable under perturbations).
\end{proof}

\section{Proof of Theorems~\ref{thm:continuously varying} and \ref{thm:q_equals_n_minus_one}}

\subsection{Proof of Theorem~\ref{thm:continuously varying}}

Let $\rho$ be a defining function for the smoothly bounded domain $\Om\subset M$. This gives rise to a Levi form $\opL_\rho$ which is a Hermitian form
on the complex vector bundle $T^{1,0}(\partial\Om)$ (see \eqref{eq-lrho}). Notice that $T^{1,0}(\partial\Om)$ has rank $(n-1)$ on the smooth manifold
$\partial\Omega$ and 	$\opL_\rho$ is  a Hermitian form on  it with at least $(n-q)$ positive eigenvalues or at least $q+1$ negative eigenvalues, depending on the case.

 Denote by $\mathcal{H}_\varphi$ the restriction of the complex Hessian of $\varphi$  to the boundary, i.e.\violet{,} for $p\in \partial \Om$, $X,Y\in T^{1,0}_p(\partial \Omega)$, we have
\[ \mathcal{H}_\varphi(X,Y)=\partial \dbar \varphi (X,\ol{Y}). \]
Then $\mathcal{H}_\varphi$ is  a Hermitian form on the bundle $T^{1,0}(\partial\Om)$. Since the boundary and the weight $\varphi$ are smooth,
these Hermitian forms $\mathcal{H}_\varphi$ and $\opL_\rho$ are  smooth.

We first consider the case in which $\opL_\rho$ and $\mathcal{H}_\varphi$ are positive definite on a subbundle of rank $n-q$.  We apply Theorem~\ref{thm-keyconstruction} with
$S=\bd\Om$, $E=T^{1,0}(\bd\Om)$ (so that $d=n-1$) and
let $V$ be the subbundle of $E$ of rank $d-q+1=n-q$ on
which both the Hermitian forms $L_\rho$ and $\mathcal{H}_\varphi$ are positive. Then there is a smooth metric $h$ on the bundle $T^{1,0}(\bd\Om)$ of rank $(n-1)$ such that both $L_\rho$ and $\mathcal{H}_\varphi|_{\bd\Om}$ are strictly $q$-positive with respect to $h$.

By the continuity of eigenvalues as a function of the matrix, and the compactness of $\partial \Omega$, there is a $\delta_0>0$ such that $\mathcal{H}_{\varphi+\delta\rho} = \mathcal{H}_\varphi+ \delta \mathcal{H}_\rho$  has $(n-q+1)$ positive eigenvalues as a Hermitian form on $T^{1,0}M|_{\partial \Omega}$ whenever $0\leq\delta\leq\delta_0$. It follows that there is a neighborhood $\mathcal{U}$ of $\partial \Omega$ in $M$ where $\mathcal{H}_{\varphi+\delta\rho}$  continues to have $(n-q+1)$ positive eigenvalues whenever $0\leq\delta\leq\delta_0$. Since the signature of the complex Hessian is independent of the choice of metric, $\delta_0$ is also independent of the choice of metric.  Choose $\epsilon_0>0$ so small that $\{-\epsilon_0\leq \rho\leq 0\}\subset \ol{\Omega}\cap \mathcal{U}$.   Let $\chi:\mathbb{R}\rightarrow\mathbb{R}$ be a smooth, convex, non-decreasing function such that $\chi(t)=0$ whenever $t\leq -1$ and $\chi'(0)=1$.  For $0<\epsilon< \epsilon_0$, set \[\varphi_\epsilon=\varphi+\delta_0\epsilon\chi(\epsilon^{-1}\rho).\]
Let $g_0$ be a Hermitian  metric on $T^{1,0}M|_{\partial\Om}$ extending the metric $h$ constructed above
on $T^{1,0}(\partial\Om)$.  We claim the following:
\begin{enumerate}
	\item $\mathcal{H}_{\varphi_\epsilon}$ has $(n-q+1)$ positive eigenvalues in $\ol{\Omega}$ as a Hermitian form on $T^{1,0}M$. \label{item-1}
	\item $\mathcal{H}_{\varphi_\epsilon}$  is strictly $q$-positive as a Hermitian form on $T^{1,0}(\partial\Om)$ with respect to $g_0$. \label{item-2}
	\item For sufficiently small $\epsilon$, the form  $\mathcal{H}_{\varphi_\epsilon}$  is strictly $q$-positive as a Hermitian form on $T^{1,0}M|_{\partial\Om}$ with respect to $g_0$. \label{item-3}
\end{enumerate}
To see \eqref{item-1}, notice first that $\varphi_\epsilon=\varphi$ on the set $\{\rho(z)\leq-\epsilon\}$ which is contained in $ \ol{\Omega}\setminus \mathcal{U}$,
provided $\epsilon< \epsilon_0$. By a  computation:
\begin{equation}\label{eq-hessian1} \mathcal{H}_{\varphi_\epsilon}=  \mathcal{H}_{\varphi}+\delta_0\chi'(\epsilon^{-1}\rho)\mathcal{H}_\rho+ \delta_0\epsilon^{-1}\chi''(\epsilon^{-1}\rho)\partial\rho\wedge\dbar\rho.
\end{equation}
Since the third term of this sum will introduce a positive semi-definite term to $\mathcal{H}_{\varphi_\epsilon}$ and since $0\leq\delta_0\chi'(\epsilon^{-1}\rho)\leq\delta_0$ on $\mathcal{U}\cap\overline\Omega$, we see that $\mathcal{H}_{\varphi_\epsilon}$ must have at least $n-q+1$ positive eigenvalues on $\mathcal{U}$.

To see \eqref{item-2}, note first that
\[ \mathcal{H}_{\varphi_\epsilon}|_{T^{1,0}(\partial\Om)}=  \mathcal{H}_{\varphi}|_{T^{1,0}(\partial\Om)}+\delta_0\opL_\rho.\]
Now since the metric $g_0$ restricts to the metric $h$ constructed above on $T^{1,0}(\bd\Om)$ and each
of the Hermitian forms $\mathcal{H}_{\varphi}$ and $\opL_\rho$ is strictly $q$-positive with respect to $h$, it
follows that $ \mathcal{H}_{\varphi_\epsilon}$ is also  strictly $q$-positive with respect to $h$ (and therefore $g_0$) on $T^{1,0}(\partial\Om)$. Notice that the fact that the sum of two strictly $q$-positive Hermitian forms is again strictly $q$-positive is an immediate consequence of  Theorem~\ref{thm-schur}.

To see \eqref{item-3}, note that \eqref{eq-hessian1} becomes in this case
\[  \mathcal{H}_{\varphi_\epsilon}|_{T^{1,0}M|_{\bd\Om}}=  \mathcal{H}_{\varphi}|_{T^{1,0}M|_{\bd\Om}}+\delta_0\mathcal{H}_\rho|_{T^{1,0}M|_{\bd\Om}}+ \delta_0\epsilon^{-1}\chi''(0)\partial\rho\wedge\dbar\rho|_{T^{1,0}M|_{\bd\Om}}.\]
We will use the characterization of Theorem~\ref{thm-schur} to show that  $ \mathcal{H}_{\varphi_\epsilon}$
is strictly $q$-positive on $T^{1,0}M|_{\bd\Om}$.  Let
\[ B_1= \max\{ \tr_{g_0}(\mathcal{H}_{\varphi}(z)|_U): z\in \partial \Omega, U \subset T^{1,0}_zM \text{ is $q$-dimensional}\}\]
and
\[ B_2= \max\{ \tr_{g_0}(\mathcal{H}_{\rho}(z)|_U): z\in \partial \Omega, U \subset T^{1,0}_zM \text{ is $q$-dimensional}\}.\]
The existence of these finite maxima follows by standard continuity and compactness arguments.
 Let $p\in \bd\Omega$ and $W$ be a $q$-dimensional
subspace of $T^{1,0}_pM$, and let $t_1,\dots, t_q$ be a $g_0$-orthonormal basis of $W$. Then we have
	\begin{align}
	\tr_{g_0}(\mathcal{H}_{\varphi_\epsilon}(p)|_W)&= \sum_{j=1}^q \mathcal{H}_{\varphi_\epsilon}(t_j, t_j)\nonumber\\
	&= \tr_{g_0}(\mathcal{H}_{\varphi}(p)|_W) + \delta_0 \tr_{g_0}(\mathcal{H}_\rho(p)|_W)+ \frac{\delta_0}{\epsilon}\chi''(0)\sum_{j=1}^q \abs{\partial \rho(t_j)}^2.
\label{eq-trace1}
\end{align}
In the case $\sum_{j=1}^q \abs{\partial \rho(t_j)}^2=0$, we have  $W\subset T^{1,0}_p(\partial \Omega)$ and
\[ \tr_{g_0}(\mathcal{H}_{\varphi_\epsilon}(p)|_W)= \tr_{h}(\mathcal{H}_{\varphi}(p)|_W) + \delta_0 \tr_{h}(\mathcal{L}_{\rho}(p)|_W)>0, \]
since both $\mathcal{H}_\varphi$ and the Levi form $\opL_\rho$ are strictly $q$-positive on $T^{1,0}(\bd\Om)$ with respect to $h$, and $g_0$ extends $h$.
By continuity there exists an $\eta>0$ independent of $\epsilon>0$ such that
\[
  \tr_{g_0}(\mathcal{H}_{\varphi}(p)|_W) + \delta_0 \tr_{g_0}(\mathcal{H}_\rho(p)|_W)>0
\]
whenever $\sum_{j=1}^q \abs{\partial \rho(t_j)}^2<\eta$.  Since the third term in \eqref{eq-trace1} is non-negative, we have $\tr_{g_0}(\mathcal{H}_{\varphi_\epsilon}(p)|_W)>0$ if $\sum_{j=1}^q \abs{\partial \rho(t_j)}^2<\eta$. 
In the case $\sum_{j=1}^q \abs{\partial \rho(t_j)}^2\geq \eta$ (and $W \subset T^{1,0}(M)|_{\p\Om}$ 
is no longer a subset of $T^{1,0}(\p\Om)$), we take
\[ \epsilon< \min\left(\frac{\delta_0\chi''(0)\eta}{B_1+\delta_0 B_2}, \epsilon_0\right)\]
and observe that
	\begin{align*}
			\tr_{g_0}(\mathcal{H}_{\varphi_\epsilon}(p)|_W)	& \geq -B_1-\delta_0 B_2 + \frac{\delta_0}{\epsilon}\chi''(0)\sum_{j=1}^q \abs{\partial \rho(t_j)}^2\\
			&\geq -B_1-\delta_0 B_2 + \frac{\delta_0}{\epsilon}\chi''(0)\eta\\
			& > 0.
		\end{align*}

In Proposition~\ref{prop-main}, we take the bundle
$E$ to be $T^{1,0}M$
(so $d=n$) and for the closed set $F\subset M$ we take $F=\partial\Omega$.
We take $g_0$ to be an extension of $h$ as above, and $\epsilon$ small enough
so that the function $\varphi_\epsilon$ satisfies the three conditions above, and let this be the function $\phi$ of Proposition~\ref{prop-main}. Extend $g_0$ smoothly
to a neighborhood of $\bd\Om$.
Then the hypotheses of Proposition~\ref{prop-main} are satisfied for $\tilde q=q$, and consequently there is a metric $g$ on a neighborhood of $\ol{\Om}$ which coincides with $g_0$ near $\partial\Omega$, and with respect to which $\mathcal{H}_{\varphi_\epsilon}$ is strictly $q$-positive. Therefore,
by Theorem~\ref{thm:L^2 theory, with weights} we have
that $H^{p,q}_{L^2}(\Omega)=0$.

Now we consider the case in which $\opL_\rho$ and $\mathcal{H}_\varphi$ are negative definite on a subbundle of rank $q+1$.  We again apply Theorem~\ref{thm-keyconstruction} with
$S=\bd\Om$ and $E=T^{1,0}(\bd\Om)$, but this time $V$ is the subbundle of $E$ of rank $d-(n-q-1)+1=q+1$ on
which both the Hermitian forms $-L_\rho$ and $-\mathcal{H}_\varphi$ are positive. Then there is a smooth metric $h$ on the bundle $T^{1,0}(\bd\Om)$ of rank $(n-1)$ such that both $-L_\rho$ and $-\mathcal{H}_\varphi|_{\bd\Om}$ are strictly $(n-q-1)$-positive with respect to $h$.

Let $\tilde L_n\in T^{1,0}(M)$ be a smooth vector field such that on $\partial\Omega$, $\tilde L_n$ is non-trivial and transverse to $T^{1,0}(\partial\Omega)$.  At any $p\in\partial\Omega$, let $\{L_j\}_{1\leq j\leq n-1}$ be an orthonormal basis for $T^{1,0}_p(\partial\Omega)$ with respect to $h$.  Since the restriction of the complex Hessian of $\varphi$ to $T^{1,0}(\partial\Omega)$ is nondegenerate, the matrix $\left(\ddbar\varphi(L_j,\bar L_k)\right)_{1\leq j,k\leq n-1}$ is invertible, and hence there exists a unique vector $v\in\mathbb{C}^{n-1}$ such that $\sum_{j=1}^{n-1} v^j\ddbar\varphi(L_j,\bar L_k)=-\ddbar\varphi(\tilde L_n,\bar L_k)$ for all $1\leq k\leq n-1$.  By the Implicit Function Theorem, the dependence of $v$ on $p$ is smooth.  For any $\epsilon>0$, we set $L_n^\epsilon=\epsilon\left(\tilde L_n+\sum_{j=1}^{n-1}v^j L_j\right)$, so that $L_n^\epsilon$ is smooth on $\partial\Omega$ and $\ddbar\varphi(L_n^\epsilon,\bar L)=0$ for all $L\in T^{1,0}(\partial\Omega)$.  There is a unique Hermitian metric $g_0^\epsilon$ on $T^{1,0}M|_{\partial\Om}$ such that $\{L_j\}_{1\leq j\leq n-1}\cup\{L_n^\epsilon\}$ is orthonormal on $\partial\Omega$.  Observe that $g_0^\epsilon$ so defined must equal $h$ on $T^{1,0}(\partial\Om)$.

Let $\{\mu_j\}_{1\leq j\leq n-1}$ denote the eigenvalues with respect to $h$ of the complex Hessian of $\varphi$ when restricted to $T^{1,0}(\partial\Omega)$, arranged in non-decreasing order.  By construction of $g_0^\epsilon$, these are also eigenvalues of the full complex Hessian of $\varphi$ with respect to $g_0^\epsilon$. Furthermore, $L_n^\epsilon$ is an eigenvector of the complex Hessian of $\varphi$ with respect to $g_0^\epsilon$ with eigenvalue
\[
  \mu_n^\epsilon=\ddbar\varphi(L_n^\epsilon,\bar L_n^\epsilon)=\epsilon^2\ddbar\varphi\left(\tilde L_n+\sum_{j=1}^{n-1}v^j L_j,\overline{\tilde L_n}+\sum_{j=1}^{n-1}\bar v^j\bar L_j\right).
\]
Observe that $\mu_n^\epsilon\rightarrow 0$ as $\epsilon\rightarrow 0^+$.  Since the complex Hessian of $-\varphi$ is strictly $(n-q-1)$-positive on $T^{1,0}(\partial\Omega)$ with respect to $h$, we have
\begin{equation}
\label{eq:n-q-1_positivity}
  \sum_{j=q+1}^{n-1}(-\mu_j)>0\text{ on }\partial\Omega.
\end{equation}
Since the eigenvalues are arranged in non-decreasing order, this is only possible if $\mu_{q+1}<0$, and hence we must also have $\mu_q<0$.  Fix $\epsilon>0$ sufficiently small so that
\begin{equation}
\label{eq:epsilon_characterized}
  \sum_{j=q+1}^{n-1}(-\mu_j)>\mu_n^\epsilon>\mu_q\text{ on }\partial\Omega.
\end{equation}
Since $\epsilon$ is now fixed, we may write $g_0=g_0^\epsilon$, $L_n=L_n^\epsilon$, and $\mu_n=\mu_n^\epsilon$.  Since $\mu_n>\mu_q$ by \eqref{eq:epsilon_characterized}, the sum of the $(n-q)$ smallest eigenvalues of the complex Hessian of $-\varphi$ is given by $\sum_{j=q+1}^n(-\mu_j)$, and this is strictly positive by \eqref{eq:epsilon_characterized}.  Hence, the complex Hessian of $-\varphi$ is $(n-q)$-positive on $\partial\Omega$.  Furthermore, \eqref{eq:n-q-1_positivity} implies that
\[
  \sum_{j=q+1}^n(-\mu_j)>-\mu_n=-\ddbar\varphi(L_n,\bar L_n),
\]
so we have condition (3) in the second set of hypotheses for Theorem \ref{thm:L^2 theory, with weights}.

As before, in Proposition~\ref{prop-main}, we take the bundle $E$ to be $T^{1,0}M$ (so $d=n$) and for the closed set $F\subset M$ we take $F=\partial\Omega$.  We take $g_0$ to be the extension of $h$ constructed above, and let $\varphi$ be the function $\phi$ of Proposition~\ref{prop-main}. Extend $g_0$ smoothly to a neighborhood of $\bd\Om$.  Then the hypotheses of Proposition~\ref{prop-main} are satisfied for $\tilde q=n-q$, and consequently there is a metric $g$ on a neighborhood of $\ol{\Om}$ which coincides with $g_0$ near $\partial\Omega$, and with respect to which $\mathcal{H}_{-\varphi}$ is strictly $(n-q)$-positive. Therefore, by Theorem~\ref{thm:L^2 theory, with weights}, we have
that $H^{p,q}_{L^2}(\Omega)=0$.

\subsection{Proof of Theorem~\ref{thm:q_equals_n_minus_one}}
The proof is the same as the proof of Theorem \ref{thm:continuously varying}, except that we replace the use of Theorem \ref{thm-keyconstruction} with that of Theorem \ref{thm-keyconstruction_top_degree}.

\section{Examples}

\subsection{Discontinuous Subbundles}
In Theorem \ref{thm:continuously varying}, we require the subbundle of shared positive directions to vary continuously on $\partial\Omega$, but it is possible for such a subbundle to exist pointwise without a continuous representative, as the following example illustrates.  This motivates the need for Theorem \ref{thm:q_equals_n_minus_one}, although at present we only have a proof of this result when $q=n-1$.

\begin{prop}
  For each $x\in\mathbb{R}^3$, define a Hermitian form on $\mathbb{C}^2\times\mathbb{C}^2$ by $H_0(u,v)=\left<u,v\right>$ and when $x\neq 0$
  \[
    H_x(u,v)=\begin{pmatrix}\bar v_1&\bar v_2\end{pmatrix}\begin{pmatrix}1-\exp(-|x|^{-2})(|x|-x_3)&-\exp(-|x|^{-2})(x_1+ix_2)\\-\exp(-|x|^{-2})(x_1-ix_2)&1-\exp(-|x|^{-2})(|x|+x_3)\end{pmatrix}\begin{pmatrix}u_1\\u_2\end{pmatrix}
  \]
  for all $u,v\in\mathbb{C}^2$.  Then:
  \begin{enumerate}
    \item For every $x\in\mathbb{R}^3$, there exists a non-trivial vector $v\in\mathbb{C}^2$ such that
    \begin{equation}
    \label{eq:positive}
      H_x(v,v)=|v|^2.
    \end{equation}
    \item If $\mathcal{O}\subset\mathbb{R}^3$ is an open set containing $\overline{B(0,R)}$ for some $R>0$ and $v:\mathcal{O}\rightarrow\mathbb{C}^2$ is a nowhere vanishing continuous vector field, then there exists $x\in\mathcal{O}$ at which
        \begin{equation}
        \label{eq:negative}
          H_x(v(x),v(x))=|v(x)|^2(1-2\exp(-R^{-2})R).
        \end{equation}
  \end{enumerate}
\end{prop}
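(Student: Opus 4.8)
The plan is to isolate from the given matrix the single rank-one Hermitian matrix
\[
  N_x=\begin{pmatrix}|x|-x_3 & x_1+ix_2\\ x_1-ix_2 & |x|+x_3\end{pmatrix},
\]
so that for $x\neq 0$ the form $H_x$ is represented by $I-\exp(-|x|^{-2})N_x$. The two facts that drive everything are $\det N_x=|x|^2-x_3^2-x_1^2-x_2^2=0$ and $\tr N_x=2|x|\ge 0$, so $N_x$ is positive semidefinite of rank at most one, with spectrum $\{0,2|x|\}$. Part (1) is then immediate: for $x=0$ one has $H_0(v,v)=|v|^2$ for every $v$; for $x\neq 0$ pick any nonzero $v\in\ker N_x$ (nontrivial since $\det N_x=0$), so $(I-\exp(-|x|^{-2})N_x)v=v$ and hence $H_x(v,v)=|v|^2$. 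This is routine.

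\textbf{Reduction of part (2) to the sphere.} On $S_R:=\partial B(0,R)$ the factor $\exp(-|x|^{-2})$ is the constant $\exp(-R^{-2})$, and $N_x$ has rank one there, so $N_x=2R\,\pi_x$ where $\pi_x$ is the orthogonal projection of $\mathbb{C}^2$ onto the complex line $\ell_x:=\range N_x$. Consequently $H_x(u,u)=|u|^2-2R\exp(-R^{-2})|\pi_x u|^2$ for all $u$, and since $|\pi_x u|\le|u|$ with equality exactly when $u\in\ell_x$, the identity \eqref{eq:negative} holds at a point $x\in S_R$ if and only if $v(x)\in\ell_x$, i.e.\ $[v(x)]=\ell_x$ in $\mathbb{CP}^1$, where $[v(x)]$ denotes the line spanned by $v(x)$. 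So it suffices to produce $x\in S_R$ with $[v(x)]=\ell_x$; note $S_R\subset\overline{B(0,R)}\subset\mathcal{O}$, so such $x$ lies in $\mathcal{O}$.

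\textbf{The topological argument.} The line $\ell_x$ is spanned by any nonzero column of $N_x$: the second column gives $\ell_x=[\,x_1+ix_2 : |x|+x_3\,]$ for $x\neq(0,0,-R)$, and the first column gives $\ell_x=[1:0]$ at $x=(0,0,-R)$. Writing $x=R\omega$ with $\omega$ on the unit sphere $S^2$ and using the chart $[z:w]\mapsto z/w$, the map $b:S_R\to\mathbb{CP}^1$, $b(x)=\ell_x$, becomes $\omega\mapsto(\omega_1+i\omega_2)/(1+\omega_3)$, which is stereographic projection of $S^2$ from $(0,0,-1)$; in particular $b$ is a homeomorphism (its continuity is also a special case of Proposition~\ref{prop-analytic}), so $\deg b=\pm1$. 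Now let $g:S_R\to\mathbb{CP}^1$, $g(x)=[v(x)]$, which is well defined and continuous since $v$ is continuous and nowhere vanishing. Because $v$ is defined and nonvanishing on all of the contractible set $\overline{B(0,R)}$, the map $x\mapsto[v(x)]$ extends $g$ continuously over $\overline{B(0,R)}$, so $g$ is null-homotopic; hence $f:=b^{-1}\circ g:S_R\to S_R$ is null-homotopic, $\deg f=0$, and its Lefschetz number is $1+\deg f=1\neq 0$. By the Lefschetz fixed point theorem $f$ has a fixed point $x_0$, so $g(x_0)=b(x_0)$, i.e.\ $[v(x_0)]=\ell_{x_0}$, which is exactly what was needed.

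\textbf{Main obstacle.} I expect the only real content to be the topological step: recognizing $b$ as a degree $\pm1$ self-map of $S^2$ and packaging the hypothesis into a coincidence/Lefschetz argument against the null-homotopic map coming from the nonvanishing extension of $v$ over the ball. Everything else is elementary — the spectrum of $N_x$, the formula for $H_x$ on $S_R$, and the small bookkeeping needed to see that $b$ is continuous and bijective (the two columns of $N_x$ never vanish simultaneously for $x\neq 0$).
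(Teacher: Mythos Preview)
Your proof is correct and follows essentially the same route as the paper: identify $H_x=I-\exp(-|x|^{-2})N_x$ with $N_x$ rank one, reduce part~(2) to finding $x\in S_R$ with $[v(x)]=\ell_x$, recognize $x\mapsto\ell_x$ as (inverse) stereographic projection, and apply the Lefschetz fixed-point theorem to the resulting degree-zero self-map. The only cosmetic difference is that you form the self-map $b^{-1}\circ g$ on $S_R$ whereas the paper uses the conjugate $g\circ b^{-1}$ on $\mathbb{CP}^1$; these have the same fixed-point set via $b$.
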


\begin{proof}
  To prove \eqref{eq:positive}, given $x\in\mathbb{R}^3\backslash\{(0,0,-R):R\geq 0\}$, let $v=\begin{pmatrix}|x|+x_3\\-x_1+ix_2\end{pmatrix}$.  Then $v$ is an eigenvector of $H_x$ (with respect to the Euclidean metric) with eigenvalue $1$, so \eqref{eq:positive} follows.  When $x=(0,0,-R)$ for some $R>0$, \eqref{eq:positive} follows for $v=\begin{pmatrix}0\\1\end{pmatrix}$.  When $x=(0,0,0)$, \eqref{eq:positive} holds for any non-vanishing vector $v$.

  Now, let $\mathcal{O}\subset\mathbb{R}^3$ be an open set containing $\overline{B(0,R)}$ for some $R>0$ and let $v:\mathcal{O}\rightarrow\mathbb{C}^2$ be a nowhere vanishing continuous vector field.  Define a homeomorphism $\varphi:\mathbb{CP}^1\rightarrow S^2\subset\mathbb{R}^3$ by
  \[
    \varphi([z_1:z_2])=\left(\frac{2\re(z_1\bar z_2)}{|z_1|^2+|z_2|^2},\frac{2\im(z_1\bar z_2)}{|z_1|^2+|z_2|^2},\frac{|z_2|^2-|z_1|^2}{|z_1|^2+|z_2|^2}\right).
  \]
  One can check that the inverse $\varphi^{-1}:S^2\rightarrow\mathbb{CP}^1$ is given by
  \[
    \varphi^{-1}(x)=\begin{cases}\left[\frac{x_1+ix_2}{1+x_3}:1\right]&x\neq(0,0,-1)\\\left[1:0\right]&x=(0,0,-1)\end{cases}.
  \]
  For $t\in [0,R]$, define a continuous map $f_t:\mathbb{CP}^1\rightarrow\mathbb{CP}^1$ by
  \[
    f_t([z_1:z_2])=\left[v_1(t\varphi([z_1:z_2])):v_2(t\varphi([z_1:z_2]))\right].
  \]
  Since $f_t$ depends continuously on $t$, $f_t$ is a homotopy between $f_0$ and $f_R$.  Since $f_0$ is a constant, the degree of $f_0$ is zero, and hence the degree of $f_R$ is also zero.  This means that the induced map on the degree $2$ singular cohomology of $\mathbb{CP}^1\cong S^2$ is trivial.  Since the only other non-trivial singular cohomology group of $S^2$ is in degree $0$ and the induced map must be the identity on this group ($S^2$ has exactly one connected component), the Lefschetz Fixed-Point Theorem guarantees that $f_R$ has a fixed-point.

  Let $[z_1:z_2]\in\mathbb{CP}^1$ be a fixed-point to $f_R$.  Set $x=R\varphi([z_1:z_2])$, so that $\left[v_1(x):v_2(x)\right]=\varphi^{-1}\left(\frac{x}{R}\right)$.  If $x=(0,0,-R)$, we have $v_2(x)=0$, so
  \[
    H_x(v(x),v(x))=|v_1(x)|^2(1-\exp(-R^{-2})(R-(-R))),
  \]
  from which \eqref{eq:negative} follows.  If $x\neq (0,0,-R)$, then there exists $\lambda\in\mathbb{C}$ such that $v_1(x)=\lambda\frac{x_1+ix_2}{R+x_3}$ and $v_2(x)=\lambda$, so $v(x)=\frac{\lambda}{R+x_3}\begin{pmatrix}x_1+ix_2\\R+x_3\end{pmatrix}$.  Observe that when $|x|=R$, $\begin{pmatrix}x_1+ix_2\\R+x_3\end{pmatrix}$ is an eigenvector of $H_x$ with eigenvalue $(1-2\exp(-R^{-2})R)$.  Since $v(x)$ is a scalar multiple of this vector, $v(x)$ is also an eigenvector of $H_x$ with the same eigenvalue, and hence \eqref{eq:negative} follows.

\end{proof}
\begin{cor}
\label{cor:discontinuous_subbundle}
  There exists a convex, simply-connected domain $\mathcal{O}\subset\mathbb{R}^3$ and a family of Hermitian forms $H_x$ on $\mathbb{C}^2\times\mathbb{C}^2$ parameterized smoothly by $x\in\mathcal{O}$ such that:
  \begin{enumerate}
    \item For every $x\in\mathbb{R}^3$, there exists a vector $v\in\mathbb{C}^2$ such that $H_x(v,v)>0$.
    \item If $v:\mathcal{O}\rightarrow\mathbb{C}^2$ is a nowhere vanishing continuous vector field, then there exists $x\in\mathcal{O}$ at which $H_x(v(x),v(x))<0$.
  \end{enumerate}
\end{cor}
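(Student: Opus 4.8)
The plan is to take $\{H_x\}$ to be precisely the family of Hermitian forms constructed in the preceding proposition, and to obtain the corollary by choosing the parameter $R$ and the domain $\mathcal{O}$ appropriately. First I would fix $R>0$ large enough that $2R\exp(-R^{-2})>1$; such an $R$ exists since $2R\exp(-R^{-2})\to+\infty$ as $R\to+\infty$ (already $R=2$ works, as $4\exp(-1/4)>3>1$). Then set $\mathcal{O}=B(0,R+1)\subset\mathbb{R}^3$, an open ball: it is convex, hence simply connected, and it contains $\overline{B(0,R)}$, which is exactly what is needed to apply part (2) of the proposition.

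The one point that requires a little care is that $x\mapsto H_x$ is smooth on $\mathbb{R}^3$, hence on $\mathcal{O}$. The entries of the defining matrix are polynomials in $x_1,x_2,x_3$ times one of the two functions $\exp(-|x|^{-2})$ or $|x|\exp(-|x|^{-2})$. Writing $F(t)=e^{-1/t}$ for $t>0$ and $F(t)=0$ for $t\le 0$, the first is $F(|x|^2)$, which is $C^\infty$; the second is $G(|x|^2)$ where $G(t)=\sqrt t\,e^{-1/t}$ for $t>0$ and $G(t)=0$ for $t\le 0$, and $G\in C^\infty(\mathbb{R})$ by the usual argument (each derivative of $\sqrt t\,e^{-1/t}$ on $(0,\infty)$ is a finite sum of terms $t^{1/2-j}e^{-1/t}$, all tending to $0$ as $t\to 0^+$). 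Composing with the smooth map $x\mapsto|x|^2$ shows every entry is smooth; the matrix is Hermitian for each $x$ (off-diagonal entries conjugate, diagonal entries real), and it equals the identity at $x=0$, consistent with $H_0(u,v)=\langle u,v\rangle$. Hence $\{H_x\}_{x\in\mathcal{O}}$ is a smooth family of Hermitian forms on $\mathbb{C}^2$.

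Conclusion (1) is then immediate from part (1) of the proposition: for every $x$ there is a nonzero $v\in\mathbb{C}^2$ with $H_x(v,v)=|v|^2>0$. For conclusion (2), given a nowhere vanishing continuous vector field $v:\mathcal{O}\to\mathbb{C}^2$, part (2) of the proposition (applicable since $\mathcal{O}$ is open and contains $\overline{B(0,R)}$) produces a point $x\in\mathcal{O}$ with $H_x(v(x),v(x))=|v(x)|^2\bigl(1-2\exp(-R^{-2})R\bigr)$, and this is strictly negative because $|v(x)|^2>0$ while $1-2\exp(-R^{-2})R<0$ by the choice of $R$. There is no genuine obstacle on the corollary's side: all the topological content — the homotopy/degree computation and the Lefschetz fixed-point argument — is contained in the proposition, and the only mild subtlety here is the $C^\infty$-smoothness of $|x|\exp(-|x|^{-2})$ at the origin, handled as above.
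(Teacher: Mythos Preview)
Your proof is correct and follows essentially the same approach as the paper: choose $R$ large enough that $1-2R\exp(-R^{-2})<0$ and take $\mathcal{O}$ to be an open ball containing $\overline{B(0,R)}$ (the paper uses $B(0,2R)$ rather than $B(0,R+1)$, an immaterial difference). You add a careful verification of the $C^\infty$-smoothness of the family at the origin, which the paper leaves implicit.
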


\begin{proof}
  It suffices to note that
  \[
    \lim_{R\rightarrow+\infty}1-2\exp(-R^{-2})R=-\infty,
  \]
  so for $R$ sufficiently large the right-hand side of \eqref{eq:negative} is strictly negative if we let $\mathcal{O}=B(0,2R)$.
\end{proof}

\subsection{Domains}
For $n\geq 1$ and $1\leq q\leq n$, set $\tilde M_q^n=\mathbb{C}^{n-q+1}\times\mathbb{CP}^{q-1}$.  If we equip $\mathbb{C}^{n-q+1}$ with holomorphic coordinates $\{z_j\}_1^{n-q+1}$, then $\varphi(z)=|z_1|^2+\cdots+|z_{n-q+1}|^2$ is an exhaustion function for $\tilde M_q^n$ such that the complex Hessian has precisely $n-q+1$ positive eigenvalues at every point of $\tilde M_q^n$.  Furthermore, the complex Hessian of $\varphi$ is strictly $q$-positive with respect to any Hermitian metric on $\tilde M_q^n$.  However, the complex Hessian of $\varphi$ has a kernel of dimension $2$ when $q\geq 3$, so we should not expect the second case in Theorem \ref{thm:continuously varying} to hold.  Suppose $q\geq 2$.  Any $C^2$ function on $\mathbb{CP}^{q-1}$ must have at least one point at which the complex Hessian has no positive eigenvalues, e.g., the point at which the function achieves its maximum on $\mathbb{CP}^{q-1}$.  This means that any $C^2$ function on $\tilde M_q^n$ must have at least one point at which the complex Hessian has at most $n-q+1$ positive eigenvalues, so it is impossible to build an exhaustion function for which the complex Hessian has at least $n-(q-2)=n-q+2$ positive eigenvalues at every point of $\tilde M_q^n$.

If $\Omega'\subset\mathbb{C}^{n-q+1}$ is bounded domain with smooth, strictly pseudoconvex boundary, then $\Omega=\Omega'\times\mathbb{CP}^{q-1}$ will be a domain in $\tilde M_q^n$ satisfying the hypotheses of Theorems \ref{thm:continuously varying} and \ref{thm-zq}.

For $n\geq 2$ and $1\leq q\leq n$, equip $\mathbb{CP}^n$ with homogeneous coordinates $[w_1:\ldots:w_{n+1}]$, i.e., we identify $\mathbb{CP}^n$ with $\mathbb{C}^{n+1}\backslash\{0\}$ under the equivalence relation $[w_1:\ldots:w_{n+1}]\sim[\lambda w_1:\ldots:\lambda w_{n+1}]$ for all $\lambda\in\mathbb{C}\backslash\{0\}$.  Set
\[
  |w|_+^2=\sum_{j=1}^{n-q+1}|w_j|^2\text{ and }|w|_-^2=\sum_{j=n-q+2}^{n+1}|w_j|^2,
\]
and define a non-compact submanifold $M_q^n\subset\mathbb{CP}^n$ by
\[
  M_q^n=\set{[w_1:\ldots:w_{n+1}]\in\mathbb{CP}^n:|w|_+^2<|w|_-^2}.
\]
Observe that the function
\[
  \phi(w)=-\log\left(1-\frac{|w|_+^2}{|w|_-^2}\right)
\]
induces a well-defined exhaustion function on $M_q^n$.  One can check that the complex Hessian of $\varphi$ has exactly $n-q+1$ positive eigenvalues.  Observe that $S=\{[w_1:\cdots:w_n]\in\mathbb{CP}^n:|w|_+=0\}$ is a $q-1$-dimensional complex submanifold of $M_q^n$, and hence, as argued above, any $C^2$ function on $M_q^n$ must have at least one point at which the complex Hessian has at most $n-q+1$ positive eigenvalues.  Indeed, the complex Hessian of $\phi$ has precisely $q-1$ negative eigenvalues on $M_q^n\backslash S$, but these $q-1$ eigenvalues vanish on $S$.

  Let $\{\mu_j\}_{1\leq j\leq n+1}$ be a sequence of real numbers such that $\mu_j\neq 0$ for any $1\leq j\leq n+1$, $\mu_j<0$ for at least one $1\leq j\leq n+1$, $\mu_j>1$ for all $1\leq j\leq n-q+1$, and $\mu_j>-1$ for all $n-q+2\leq j\leq n+1$.  Then $\sum_{j=1}^{n+1}\mu_j|w_j|^2> |w|_+^2-|w|_-^2$ for all $w\in\mathbb{C}^{n+1}$, so
  \[
    \Omega=\set{[w_1:\ldots:w_{n+1}]\in\mathbb{CP}^n:\sum_{j=1}^{n+1}\mu_j|w_j|^2<0}
  \]
  is a subset of $M_q^n$ satisfying the hypotheses of Theorems \ref{thm-zq} and \ref{thm:continuously varying}..

 \bibliographystyle{alpha}
 \bibliography{mybib,inertia}
\end{document}